\newcommand{\zz}{\mathbf{\mathbb{Z}}}
\newcommand{\rr}{\mathbf{\mathbb{R}}}
\newcommand{\er}{\mathbf{\mathbb{R}}}
\newcommand{\cc}{\mathbf{\mathbb{C}}}
\newcommand{\V}{\mathbf{Y}}
\newcommand{\W}{\mathbf{\mathbb{L}}}
\newcommand{\X}{\mathbf{\mathbb{X}}}
\newcommand{\Y}{\mathbf{\mathbb{Y}}}
\newcommand{\M}{\mathbf{\mathbb{M}}}
\newcommand{\E}{\mathbf{L}}
\newcommand{\C}{{\mathscr{C}}}
\newcommand{\Z}{{\mathbf{\mathcal{Z}}}}
\newcommand{\FF}{{\mathbf{\mathcal{F}}}}
\newcommand{\EE}{{\mathbf{X}}}
\newcommand{\sign}{\,{\rm sign}\,}
\newcommand{\ord}{\operatorname{ord}}
\newcommand{\Int}{\operatorname{Int}}
\newcommand{\ve}{\varepsilon}
\newcommand{\vf}{\varphi}
\newcommand{\grad}{\nabla}
\newcommand{\dist}{\operatorname{dist}}
\newcommand{\length}{\operatorname{length}}
\numberwithin{equation}{section}
\newcounter{thm}[section]
\newtheorem{lem}[thm]{Lemma}
\newtheorem{twr}[thm]{Theorem}
\newtheorem{cor}[thm]{Corollary}
\newtheorem{prop}[thm]{Proposition}
\newtheorem{fact}[thm]{Fact}
\theoremstyle{definition}
\newtheorem{remark}[thm]{Remark}
\numberwithin{equation}{section}
\def \wll{{\mathcal{L}}}
\def\FF{\mathscr{F}}
\def\la{\lambda}
\newcommand{\wykl}{{\mathcal{L}}}
\begin{document}

\title[Effective {\L}ojasiewicz gradient inequality]{Effective {\L}ojasiewicz gradient inequality\\ and finite determinacy of non-isolated Nash function singularities}
\subjclass[2000]{14R99, 11E25, 14P05, 32S70.} 
\keywords{Semialgebraic function, Nash function, {\L}ojasiewicz gradient inequality, {\L}ojasiewicz exponent.}


\author[B. Osi\'nska-Ulrych]{Beata Osi\'nska-Ulrych}
\address{Beata Osi\'nska-Ulrych, Faculty of Mathematics and Computer Science, University of \L \'od\'z, 
S. Banacha 22, 90-238 \L \'od\'z, Poland}
\email{beata.osinska@wmii.uni.lodz.pl}

\author[G. Skalski]{Grzegorz Skalski}
\address{Grzegorz Skalski, Faculty of Mathematics and Computer Science, University of \L \'od\'z, 
S. Banacha 22, 90-238 \L \'od\'z, Poland}
\email{grzegorz.skalski@wmii.uni.lodz.pl}

\author[S. Spodzieja]{Stanis{\l}aw Spodzieja}
\address{Stanis{\l}aw Spodzieja, Faculty of Mathematics and Computer Science, University of \L \'od\'z, 
S. Banacha 22, 90-238 \L \'od\'z, Poland}
\email{spodziej@math.uni.lodz.pl}

\date{\today}

\begin{abstract} 
Let $X\subset \rr^n$ be a compact semialgebraic set and let $f:X\to \rr$ be a nonzero Nash function. We give a  Solern\'o and  D'Acunto-Kurdyka type estimation of the exponent $\varrho\in[0,1)$ in the 
{\L}ojasiewicz gradient inequality $|\grad f(x)|\ge C|f(x)|^\varrho$ for $x\in X$, $|f(x)|<\varepsilon$ for some constants $C,\varepsilon>0$, in terms of the degree of a polynomial $P$ such that $P(x,f(x))=0$, $x\in X$. As a corollary we obtain an estimation of the degree of sufficiency of non-isolated Nash functions singularities.
%
\end{abstract}

\maketitle

\section{Introduction}

{\L}ojasiewicz inequalities are important tools in various branches of mathematics: differential equations, singularity theory and optimization (for more detailed references, see for example  \cite{KMP},  \cite{KS1}, \cite{KS2}, \cite{LejeuneTeissier} and 
\cite{RS3}). 
 Quantitative aspects, like estimates  (or exact computation), of  these exponents are subject of intensive study in real and complex algebraic geometry (see for instance \cite{KS1}, \cite{KS2}, \cite{KSS} and \cite{RS2}).  
Our main goal  is to give, in terms of the {\L}ojasiewicz inequality, an effective sufficient condition for Nash function germs of non-isolated singularity at zero to be isotopical (Theorem \ref{mainsuffjetLojineq}). The main tool in the proof is an effective estimation of the exponent in the {\L}ojasiewicz gradient inequality (Theorems \ref{maintwr} and \ref{maintwrIII}). 

Determinacy of jets of functions with isolated singularity at zero was investigated by many authors, including  N. H.~Kuiper \cite{Kui},  T. C.~Kuo \cite{Kuo}, J.~Bochnak and S. \L ojasiewicz \cite{BL} for real functions and  S. H.~Chang and Y. C.~Lu \cite{CL}, B.~Teissier \cite{T} and J. Bochnak and W.~Kucharz \cite{BK} for complex functions. Similar investigations were also carried out for functions in a neighbourhood of infinity by P. Cassou-Nogu\`es and  H. H. Vui \cite{CH} (see also \cite{RS4}, \cite{Sk}). The case of real jets with non-isolated singularities was studied among others by  V. Grandjean \cite{G} and 
X. Xu \cite{Xu}, and for complex functions by D.~Siersma \cite{Si2} and R. Pellikaan \cite{P}. In the case of nondegenerate analytic functions $f$, $g$, a condition for topological triviality of deformations $f+tg$, $t\in[0,1]$ in terms of Newton polyhedra was obtained by J. Damon and T. Gaffney \cite{DG}, and for blow analytic triviality  by  T. Fukui and E.~~Yoshinaga \cite{FY}. Some algebraic conditions for finite determinacy of a smooth function jet were obtained by L. Kushner \cite{Kne}.


\subsection{{\L}ojasiewicz gradient inequality}

Let $U\subset \rr^n$ be an open set and let $a\in U$. 
 Let $f,F: U \to \rr$ be continuous semialgebraic functions such that $a\in F^{-1}(0) \subset f^{-1}(0)\subset U$. 
Then 
 the following \emph{{\L}ojasiewicz inequality} holds:
\begin{equation}\label{Lojineq01}
|F(x)|\ge C|f(x)|^\eta \;  \hbox{ in a neighbourhood of $a\in\rr^n$ for some constant $C>0$.}
\end{equation}
The lower bound of the exponents $\eta$ in \eqref{Lojineq01} is called the \emph{{\L}ojasiewicz exponent} of the pair  $(F,f)$  \emph{at} $a$ and is denoted by $\wykl_a(F,f)$. It is known   that $\wykl_a(F,f)$ is  a rational number (see \cite{BR}) and the
inequality \eqref{Lojineq01} holds actually with $\eta= \wykl_a(F,f)$ on some neighbourhood of the point $a$ for some positive constant $C$
 (see for instance \cite{Sp1}).
An asymptotic estimate  for $\wykl_a(F,f)$  was obtained by Solern\'o \cite{Solerno}:
\begin{equation}\label{eqsolerno}\tag{S}
\wykl_a(F,f) \le D^{M^{c\ell}},
\end{equation}
where $D$ is a bound for the degrees of the polynomials involved in a description of $F$, $f$ and $U$; $M$ is the number of variables
in these formulas; $\ell$ is the maximum  number of alternating blocs of quantifiers in these formulas; and $c$ is an unspecified universal constant.

In this paper, we consider the case when $F$ is equal to the gradient $\nabla f:=\left(\frac{\partial f}{\partial x_1},\ldots,\frac{\partial f}{\partial x_n}\right):U\to\er^n $ of a Nash function $f$ in $x=(x_1,\ldots,x_n)$. Recall that semialgebraic and analytic functions are called \emph{Nash functions}. 

Our main  goal is to obtain an effective estimate for the  exponent $\varrho\in[0,1)$ in the following \emph{{\L}ojasiewicz gradient inequality} (see \cite{Lo1} or \cite{Lo2}, cf. \cite{T}):
\begin{equation}\label{Lojineq1}\tag{{\rm \L}}
|\nabla f(x)|\ge C|f(x)|^\varrho  \hbox{ in a neighbourhood of $a\in\rr^n$ for some constant $C>0$}
\end{equation}
for an arbitrary Nash function $f:U\to \er$, where $f(a)=0$, in terms of the degree of a polynomial $P\in\rr[x,y]$ describing the graph of $f$. 
 We denote by $|\nabla f(x)|$ the Euclidean norm of $\nabla f(x)$, i.e. $|\nabla f(x)|^2=\left(\frac{\partial f}{\partial x_1}(x)\right)^2+\cdots+\left(\frac{\partial f}{\partial x_n}(x)\right)^2$.

The smallest  exponent $\varrho$ in  \eqref{Lojineq1}, denoted by $\varrho_a(f)$,  is called the \emph{{\L}ojasiewicz exponent in the gradient inequality} at $a$. It is known that \eqref{Lojineq1} holds with $\varrho=\varrho_a(f)$.

In the case of a polynomial function $f:\er^n\to\er$ of degree $d>0$ such that $0$ is an isolated point of $f^{-1}(0)$, J.~Gwo\'zdziewicz \cite{Gw} (cf. \cite{K1}) proved that
\begin{equation}\label{Gw2}\tag{G2}
\varrho_0(f)\le 1-\frac{1}{(d-1)^n+1},
\end{equation}
and in the general case of an arbitrary polynomial $f$, 
 D. D'Acunto and K.~Kurdyka \cite{DK} (cf. \cite{DK1}, \cite{Gabrielov} and \cite{TienSon}) showed that
\begin{equation}\label{DKineq}\tag{DK}
\varrho_0(f)\le 1-\frac{1}{d(3d-3)^{n-1}},\quad\hbox{provided}\quad d\ge 2.
\end{equation}
If $f$ is a rational function of the form $f={p}\slash{q}$, 
 where $p,\,q\in \rr[x]$, $p(0)=0$ and $q(0)\ne 0$, then  $\varrho_0(f)=\varrho_0(p)$, so \eqref{Gw2} and \eqref{DKineq} hold with $d=\deg p$.

The aim of this paper is to show generalizations of the above estimates 
 for Nash functions 
 (see Theorems \ref{maintwr} and \ref{maintwrIII} in Section \ref{Lojgradineqsect}). More precisely, let $U\subset \rr^n$ be a neighbourhood of $a\in\rr^n$ and let $f:U\to \rr$ be a nonzero Nash function. We give a  Solern\'o and  D'Acunto-Kurdyka type estimation of the exponent $\varrho\in[0,1)$ in the 
{\L}ojasiewicz gradient inequality \eqref{Lojineq1} in terms of the degree $d$ of a nonzero polynomial $P$ such that $P(x,f(x))=0$, $x\in U$. Namely, in Theorem \ref{maintwrIII} we obtain
$$
\varrho_a(f)\leq 1-\frac{1}{2(2d-1)^{3n+1}}.
$$
If additionally $n\ge 2$ and  $\frac{\partial P}{\partial y}(x,f(x))\ne 0$ for $x\in U$, then in Theorem \ref{maintwr} we obtain
$$
\varrho_a(f)\leq 1-\frac{1}{d(3d-2)^{n}+1}, \quad\hbox{provided}\quad d\ge 2.
$$
The above estimates are comparable with the Solern\'o estimate \eqref{eqsolerno}, but our estimates are explicit.

As a corollary, we obtain the following inequality (see Corollary \ref{wn012}):
\begin{equation}\label{eq052}
|\nabla f(x)|\ge C \dist (x,f^{-1}(0))^{2(2d-1)^{3n+1}-1}\quad \hbox{in a neighbourhood of $a$}.
\end{equation}
If additionally $n\ge 2$ and  $\frac{\partial P}{\partial y}(x,f(x))\ne 0$ for $x\in U$, then 
\begin{equation}\label{eq051}
|\nabla f(x)|\ge C \dist (x,f^{-1}(0))^{d(3d-2)^{n}}\quad \hbox{in a neighbourhood of $a$}.
\end{equation}
The inequalities  \eqref{eq052}, \eqref{eq051} are essential points in the effective estimate of the degree of sufficiency of non-isolated Nash function singularities given in the next section. The proof of these inequalities is based on Theorem \ref{maintwrIII} and estimates of the length of trajectories of the vector field $\nabla f$ in $U\setminus f^{-1}(0)$ (see Theorem \ref{lengthtrajectory}).

\subsection{Sufficiency of non-isolated Nash function singularities}
 

Let $\C^k_a(n)$  denote the set of $\C^k$ real functions defined in neighbourhoods of $a\in \rr^n$. 

By a $k$-\emph{jet} at $a\in \rr^n$ in the class  $\C^\ell$ we mean a family of functions $w\subset \C^\ell_a(n)$, called \emph{$\C^\ell$-realizations} of this jet, possessing the same Taylor polynomial of degree $k$ at $a$. We also say that $f$ \emph{determines} a $k$-jet at $a$ in  $\C^\ell$  if $f$ is a $\C^\ell$-realization of this jet. 
For a function $f\in \C^k_a(n)$, we denote by $j^kf(a)$ the $k$-jet at $a$ (in  $\C^k$) determined by $f$. 

Let $Z\subset \rr^n$ be a set such that $0\in Z$ and let $k\in\zz$, $k>0$. By a $k$-$Z$-\emph{jet in the class $\C^k$}, 
or briefly a $k$-$Z$-\emph{jet}, we mean an equivalence class $w\subset \C^k_0(n)$ of the following equivalence relation: $f\sim g$ iff for some neighbourhood $U\subset \rr^n$ of the origin, $j^kf(a)=j^kg(a)$ for $a\in Z\cap U$ (cf. \cite{MigusRS}, \cite{Xu}). The functions $f\in w$ are called \emph{$\C^k$-$Z$-realizations} of the jet $w$ and we write $w=j^{k}_{Z}f$. The set of all jets $j^{k}_{Z}f$ is denoted by $J^{k}_Z(n)$.

The $k$-$Z$-jet $w\in J^{k}_Z(n)$ is said to be $\C^r$-$Z$-\emph{sufficient} (resp. $Z$-$v$-\emph{sufficient}) in the class  $\C^k$ if for every of its $\C^k$-$Z$-realizations $f$ and $g$ there exist sufficiently small neighbourhoods $U_1,\,U_2\subset\rr^n$ of $0$, and  a $\C^r$ diffeomorphism $\vf:U_1\to U_2$, such that $f\circ \vf=g$ in $U_1$ (resp. there exists a homeomorphism $\vf: [f^{-1}(0)\cup Z]\cap U_1 \to[g^{-1}(0)\cup Z]\cap U_2$ with  $\vf(0)=0$ and $\vf(Z\cap U_1)= Z\cap U_2$).

The classical and significant result on sufficiency of jets is the following:

\begin{twr}[Kuiper, Kuo, Bochnak-{\L}ojasiewicz]\label{KKBL}
Let $w$ be a $k$-jet at $0\in \rr^n$ and let $f$ be its $\C^k$-realization. If $f(0)=0$  then the following conditions are equivalent:
\begin{itemize}
\item[(a)] $w$ is $\C^0$-sufficient in $\C^k$,
\item[(b)] $w$ is $v$-sufficient in $\C^k$,
\item[(c)] $|\nabla f(x)|\ge C|x|^{k-1}$ in a neighbourhood of the origin for some $C>0$.
\end{itemize}
\end{twr}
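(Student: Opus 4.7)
The plan is to establish the equivalence by proving the cycle $(a)\Rightarrow(b)\Rightarrow(c)\Rightarrow(a)$.

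The implication $(a)\Rightarrow(b)$ is essentially a matter of unpacking the definitions: if $\varphi:U_1\to U_2$ is a homeomorphism with $f\circ\varphi=g$, then $\varphi(f^{-1}(0)\cap U_1)=g^{-1}(0)\cap U_2$ and $\varphi(0)=0$, so the restriction of $\varphi$ realizes $v$-sufficiency.

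For $(c)\Rightarrow(a)$, which is the Kuiper--Kuo part, I would fix two realizations $f,g$ of $w$ and form the linear homotopy $F_t=(1-t)f+tg$ for $t\in[0,1]$. Since $j^kf(0)=j^kg(0)$, the difference $h:=g-f$ satisfies $|h(x)|=O(|x|^{k+1})$ and $|\nabla h(x)|=O(|x|^k)$. Combined with (c) this yields, on a small ball $U$ around $0$,
\begin{equation*}
|\nabla_x F_t(x)|\ge \tfrac{C}{2}|x|^{k-1}\qquad\text{for all }t\in[0,1],\ x\in U\setminus\{0\}.
\end{equation*}
I would then define on $U\setminus\{0\}\times[0,1]$ the time-dependent vector field
\begin{equation*}
V_t(x)=-\frac{h(x)\,\nabla_x F_t(x)}{|\nabla_x F_t(x)|^2},
\end{equation*}
whose norm is bounded by $|h(x)|/|\nabla F_t(x)|=O(|x|^2)$, so $V_t$ extends continuously to $U$ with $V_t(0)=0$. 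The key computation
\begin{equation*}
\tfrac{d}{dt}F_t(x)+\nabla_x F_t(x)\cdot V_t(x)=h(x)-h(x)=0
\end{equation*}
shows that $F_t$ is preserved along the flow $\Phi_t$ of $V_t+\partial_t$. Standard arguments then show $\Phi_t$ is well-defined on a smaller neighbourhood for all $t\in[0,1]$ and is a homeomorphism fixing $0$; setting $\varphi:=\Phi_1(\,\cdot\,,0)$ gives $g\circ\varphi=f$, establishing $\mathcal{C}^0$-sufficiency.

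For the Bochnak--{\L}ojasiewicz direction $(b)\Rightarrow(c)$, I would argue contrapositively. Assuming (c) fails, there is a sequence $x_\nu\to 0$ with $|\nabla f(x_\nu)|/|x_\nu|^{k-1}\to 0$. I would construct a $\mathcal{C}^k$-realization $g$ of $w$ by setting $g(x)=f(x)+\sum_\nu b_\nu(x)$, where each $b_\nu$ is a small bump supported in a tiny ball around $x_\nu$, chosen of order $o(|x_\nu|^{k-1})\cdot|x-x_\nu|^2$ so that $j^kb_\nu(0)=0$ and so that $g$ has a critical value different from $0$ on the support of $b_\nu$ for infinitely many $\nu$. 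The resulting $g^{-1}(0)$ acquires, near each such $x_\nu$, a local connected component (a small sphere-like set) that $f^{-1}(0)$ does not possess at the corresponding scale. Since no origin-preserving homeomorphism can carry two sequences with incompatible local topological type onto each other, $v$-sufficiency fails.

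The main obstacle is the $(c)\Rightarrow(a)$ step: one must verify that the integral curves of $V_t+\partial_t$ do not leave $U$ before time $1$, that the continuous extension of $V_t$ to the origin yields a genuine homeomorphism rather than merely a continuous map, and that the bounds on $|\nabla F_t|$ and $|h|$ mesh in the right way. The $(b)\Rightarrow(c)$ direction is conceptually delicate but technically reduces to a careful bump-function construction together with a local topological invariant that distinguishes $f^{-1}(0)$ from $g^{-1}(0)$.
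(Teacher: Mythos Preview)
The paper does not actually prove this theorem. Immediately after stating it, the authors simply attribute each implication to its original source: ``The implication (c)$\Rightarrow$(a) was proved by N.~H.~Kuiper and T.~C.~Kuo, (b)$\Rightarrow$(c) by J.~Bochnak and S.~{\L}ojasiewicz, and (a)$\Rightarrow$(b) is obvious.'' Your proposal follows exactly this classical cycle and sketches the standard arguments from those references, so in that sense it is aligned with what the paper invokes, only more detailed.

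One small correction to your sketch of (c)$\Rightarrow$(a): since $f$ and $g$ are only assumed $\mathcal{C}^k$, having the same $k$-jet gives $|h(x)|=o(|x|^{k})$ and $|\nabla h(x)|=o(|x|^{k-1})$, not the $O(|x|^{k+1})$ and $O(|x|^{k})$ you wrote (those would require $\mathcal{C}^{k+1}$). The little-$o$ estimates still suffice: one gets $|\nabla_x F_t(x)|\ge (C/2)|x|^{k-1}$ near $0$ and $|V_t(x)|=o(|x|)$, which is enough for the continuous extension and the flow argument. This is a cosmetic slip, not a structural gap.
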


The implication (c)$\Rightarrow$(a) was proved by N. H.~Kuiper \cite{Kui} and T. C.~Kuo \cite{Kuo}, (b)$\Rightarrow$(c)  by J.~Bochnak and S. \L ojasiewicz \cite{BL}, and  (a)$\Rightarrow$(b) is obvious (cf. \cite{OSS}). 

Let us recall the notions of isotopy and topological triviality. 
  Let  $\Omega\subset\rr^n$ be a neighbourhood of $0\in\rr^n$ and let $Z\subset \rr^n$ with $0\in Z$. 
 
A continuous mapping $H\colon\Omega\times [0,1]\to\rr^n$ is called an \emph{isotopy near $Z$ at zero} if:

\noindent{\rm (a)} $H_0(x)=x$ for $x\in \Omega$ and $H_t(x)=x$ for $t\in[0,1]$ and $x\in \Omega\cap Z$,\\
{\rm (b)} for any $t$ the mapping  $H_t:\Omega\to \rr^n$ is a homeomorphism onto $H_t(\Omega)$,\\
where   $H_t(x)=H(x,t)$ for $x\in\Omega$, $t\in [0,1]$.

Functions $f:\Omega_1\to\rr $, $g:\Omega_2\to \rr$, where $\Omega_1,\Omega_2\subset\rr^n$ are neighbourhoods of $0\in\rr^n$, are  
called \emph{isotopical near $Z$ at zero} if there exists an isotopy near $Z$ at zero, 
  $H:\Omega\times [0,1]\to\rr^n$, with $\Omega\subset\Omega_1\cap \Omega_2$, such that
  $f(H_1(x))=g(x)$, $x\in\Omega$. 

A deformation $f+tg$ is called \emph{topologically trivial near $Z$} along $[0,1]$ if there exists an isotopy near $Z$ at zero, $H:\Omega \times [0,1]\to \rr^n$, with $\Omega \subset \Omega_1\cap\Omega_2$, such that $f(H(t,x))+tg(H(t,x))$ does not depend on $t$.

%
Theorem \ref{KKBL} concerns the case of an isolated singularity of $f$ at $0$, i.e. $0$ is an isolated zero of $\nabla f$. In the case of a non-isolated singularity of $f$ at $0$, from   \cite[Theorems 1.3 and 1.4]{MigusRS} (cf. \cite{Xu}) we have the following   criterion 
for sufficiency of jets.

\begin{twr}\label{XuXu}
Let $f\in \C_0^k(n)$ be a $\C^k$-$Z$-realization of a $k$-$Z$-jet $w\in J^{k}_Z(n)$, where $k>1$ and $Z=f^{-1}(0)$, $0\in Z$, and suppose $(\nabla f)^{-1}(0)\subset Z$. Then the following conditions are equivalent:

\noindent{\rm (a)}  The $k$-$Z$-jet $w$ is $\C^0$-$Z$-sufficient in $\C^k$. 


\noindent{\rm (b)} For any $\C^k$-$Z$-realizations $f_1,\,f_2$ of $w$,  the deformation $f_1+t(f_2-f_1)$, $t\in\rr$, is topologically trivial along $[0,1]$.

\noindent{\rm (c)} Any two $\C^k$-$Z$-realizations 
 of $w$ are  isotopical at zero. 

\noindent{\rm(d)} The $k$-$Z$-jet $w$ is $Z$-$v$-sufficient in $\C^k$.

\noindent{\rm(e)} There exists a positive constant $C$ such that
\begin{equation*}\label{KKcondition}
|\nabla f(x)|\ge C\dist(x,Z)^{k-1}\quad \hbox{in a neighbourhood of the origin}.
\end{equation*}
\end{twr}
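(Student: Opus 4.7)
The plan is to close the cycle $(e) \Rightarrow (b) \Rightarrow (a) \Rightarrow (c) \Rightarrow (d) \Rightarrow (e)$. The two nontrivial implications are $(e) \Rightarrow (b)$, via a Kuo--{\L}ojasiewicz vector field construction adapted to the non-isolated zero set $Z$, and $(d) \Rightarrow (e)$, a contraposition argument in the spirit of Bochnak--{\L}ojasiewicz. The remaining three implications are essentially formal once the isotopy is in hand: if $H$ trivializes the deformation $f_1 + t(f_2 - f_1)$, then $F(H(x,t), t) = f_1(x)$ for all $t$, so at $t = 1$ one has $f_2(H_1(x)) = f_1(x)$; taking $\vf := H_1$ gives $f_2 \circ \vf = f_1$, yielding (a), while running this same $H$ between any pair of realizations gives (c), and restricting $H_1$ to $f_1^{-1}(0) \cup Z$ produces the homeomorphism required in (d), since $H_1$ fixes $Z$ pointwise.

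For $(e) \Rightarrow (b)$, fix two $\C^k$-$Z$-realizations $f_1, f_2$ of $w$, let $h = f_2 - f_1$, and set $F(x,t) = f_1(x) + t h(x)$. I would integrate the time-dependent vector field
\[
V(x,t) = -\frac{h(x)\, \nabla_x F(x,t)}{|\nabla_x F(x,t)|^2}
\]
on $(\Omega \setminus Z) \times [0,1]$, so that $\tfrac{d}{dt} F(H(x,t), t) = 0$ along the flow $H$ of $V$ with initial condition $H_0 = \id$. Three estimates make this work. First, since $j^k_Z f_1 = j^k_Z f_2$, Taylor expansion of $h$ at the nearest point of $Z$ gives $|h(x)| = o(\dist(x,Z)^k)$ and $|\nabla h(x)| = o(\dist(x,Z)^{k-1})$. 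Second, (e) applied to $f_1$ gives $|\nabla f_1(x)| \ge C \dist(x,Z)^{k-1}$; combined with the previous estimate this yields $|\nabla_x F(x,t)| \ge \tfrac{C}{2} \dist(x,Z)^{k-1}$ uniformly in $t \in [0,1]$, after shrinking $\Omega$. Third, therefore $|V(x,t)| = o(\dist(x,Z))$, so a Gronwall argument keeps $\dist(H(x,t), Z)$ comparable to $\dist(x, Z)$, prevents trajectories from reaching $Z$ in finite time, and lets one extend $H$ continuously by the identity on $Z$. The hypothesis $(\nabla f)^{-1}(0) \subset Z$ is what makes $V$ well defined on $\Omega \setminus Z$.

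For $(d) \Rightarrow (e)$, I would argue by contraposition. If (e) fails, choose $x_m \to 0$ with $x_m \notin Z$ and $|\nabla f(x_m)| / \dist(x_m, Z)^{k-1} \to 0$. The aim is to construct a second $\C^k$-$Z$-realization $g = f + h$ of $w$, with $j^k_Z h = 0$, whose zero set $g^{-1}(0) \cup Z$ carries a topological invariant absent from $f^{-1}(0) \cup Z = Z$ relative to homeomorphisms fixing $Z$. One builds $h = \sum_m h_m$ with each $h_m$ a bump supported in a small ball around $x_m$, vanishing to order $k$ on $Z$ and calibrated in amplitude against $|\nabla f(x_m)|$ so that $f + h$ acquires extra connected components of zeros accumulating at $0$ off $Z$. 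These components produce the topological obstruction that contradicts (d).

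The main obstacle is precisely this last construction: each perturbation $h_m$ must vanish to order $k$ on the possibly very irregular closed set $Z$, while still altering the zero set of $f$ in a way detectable up to $Z$-preserving homeomorphisms. This is the technical core of the non-isolated theory, handled in \cite{MigusRS} and \cite{Xu} by a Whitney extension--type argument carefully adapted to the local geometry of $Z$; it is what makes the equivalence of (e) with (a)--(d) substantially subtler than in the isolated case of Theorem \ref{KKBL}.
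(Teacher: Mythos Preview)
The paper does not give its own proof of this theorem: it is quoted verbatim from \cite[Theorems 1.3 and 1.4]{MigusRS} (cf.\ \cite{Xu}), so there is no in-paper argument to compare against. Your outline is in the right spirit and matches the strategy of those references, but as written the logical cycle does not close.

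The gap is the implication $(a)\Rightarrow(c)$. Your justification of the ``three formal implications'' uses the isotopy $H$ produced by (b) to obtain (a), (c) and (d); that is, you actually argue $(b)\Rightarrow(a)$, $(b)\Rightarrow(c)$ and $(c)\Rightarrow(d)$, not $(a)\Rightarrow(c)$. Condition (a) only hands you a single homeomorphism $\vf$ with $f\circ\vf=g$; nothing in the definition forces $\vf(0)=0$, $\vf(Z)=Z$, or connects $\vf$ to the identity through an isotopy fixing $Z$. So from your arguments one gets the closed loop $(e)\Rightarrow(b)\Rightarrow(c)\Rightarrow(d)\Rightarrow(e)$ together with $(b)\Rightarrow(a)$, but (a) is left as a dead end: it is implied by the others, yet you have not shown it implies any of them. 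Note that $(a)\Rightarrow(d)$ is not free either, since $\vf^{-1}(Z)=g^{-1}(0)$ need not equal $Z$.

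The standard repair, and the one carried out in \cite{MigusRS}, is to strengthen the contrapositive construction you sketch for $(d)\Rightarrow(e)$ so that it simultaneously yields $\lnot(e)\Rightarrow\lnot(a)$: the perturbation $g=f+h$ is built so that the germs $(f^{-1}(0),0)$ and $(g^{-1}(0),0)$ are not even ambient-homeomorphic (extra branches of zeros accumulating at $0$), which rules out any homeomorphism $\vf$ with $f\circ\vf=g$, not just $Z$-preserving ones. With that upgrade your scheme becomes $(e)\Rightarrow(b)\Rightarrow(a),(c),(d)$ and $\lnot(e)\Rightarrow\lnot(a)\wedge\lnot(d)$, which does give the full equivalence.
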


 
  Let $f:U\to \rr$ be a Nash function, where $U\subset \rr^n$ is a neighbourhood of the origin, let $Z=f^{-1}(0)$, and suppose  $0\in Z$.
 
 The main result of this paper is the following corollary from Theorem \ref{XuXu} and inequality \eqref{eq052}.
 
 \begin{twr}\label{mainsuffjetLojineq}
Let  $k=2(2d-1)^{3n+1}$, where $d=\deg_0 f$, and let $w\in J^{k}_Z(n)$ be the $k$-$Z$-jet for which $f$ is a $\C^k$-$Z$-realization. Then:
 
 \noindent{\rm(a)} The $k$-$Z$-jet $w$ is  $\C^0$-$Z$-sufficient in $\C^k$.
 
 \noindent{\rm(b)}  For any $\C^k$-$Z$-realizations $f_1,\,f_2$ of $w$,  the deformation $f_1+t(f_2-f_1)$, $t\in\rr$, is topologically trivial along $[0,1]$. 
 
  \noindent{\rm(c)}  Any two $\C^k$-$Z$-realizations 
 of $w$ are  isotopical at zero. 
 
  \noindent{\rm(d)} The $k$-$Z$-jet $w$ is $Z$-$v$-sufficient in $\C^k$. 
  \end{twr}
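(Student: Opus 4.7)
The plan is to obtain Theorem \ref{mainsuffjetLojineq} as a direct application of Theorem \ref{XuXu}, using the distance version of the gradient inequality \eqref{eq052} to supply condition (e) of that theorem with exactly the exponent $k-1$. Since $f$ is a Nash function, it is real analytic on $U$, hence $f\in\C^{k}_0(n)$ for every positive integer $k$; in particular $f$ is a $\C^k$-$Z$-realization of the $k$-$Z$-jet $w$ it determines. Moreover $k=2(2d-1)^{3n+1}\ge 2>1$, so the standing requirement $k>1$ of Theorem \ref{XuXu} is satisfied.

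First I would invoke inequality \eqref{eq052} at $a=0$: it produces a neighbourhood $U_0\subset U$ of the origin and a constant $C>0$ such that
$$
|\grad f(x)|\ge C\,\dist(x,Z)^{2(2d-1)^{3n+1}-1}=C\,\dist(x,Z)^{k-1},\quad x\in U_0.
$$
This is precisely the condition displayed in part (e) of Theorem \ref{XuXu}, with the exponent $k-1$ matching the $k$ specified in the statement.

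Next I would verify the remaining hypothesis $(\grad f)^{-1}(0)\subset Z$ of Theorem \ref{XuXu}. After shrinking $U_0$ if necessary, Theorem \ref{maintwrIII} furnishes an exponent $\varrho\in[0,1)$ and a constant $C'>0$ with $|\grad f(x)|\ge C'|f(x)|^\varrho$ for $x\in U_0$. Any $x\in U_0$ with $\grad f(x)=0$ then satisfies $|f(x)|^\varrho\le 0$, forcing $f(x)=0$, i.e.\ $x\in Z$; hence $(\grad f)^{-1}(0)\cap U_0\subset Z$, which is exactly what the theorem requires (the condition is local at the origin).

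With both the structural hypothesis $(\grad f)^{-1}(0)\subset Z$ and condition (e) in hand, the equivalence of (a)--(e) in Theorem \ref{XuXu} delivers conclusions (a)--(d) at once. There is no real conceptual obstacle here: the substantive analytic content is packaged into \eqref{eq052} (itself obtained from Theorem \ref{maintwrIII} together with the trajectory-length estimate of Theorem \ref{lengthtrajectory}), and the only thing one must be careful about is the bookkeeping that matches the exponent $2(2d-1)^{3n+1}-1$ coming out of \eqref{eq052} with the sufficiency exponent $k-1$ appearing in condition (e), thereby pinning down $k=2(2d-1)^{3n+1}$.
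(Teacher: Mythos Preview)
Your proposal is correct and matches the paper's own argument, which the paper states only as ``the following corollary from Theorem \ref{XuXu} and inequality \eqref{eq052}.'' Your additional care in verifying the hypothesis $(\nabla f)^{-1}(0)\subset Z$ via Theorem \ref{maintwrIII} is sound (and in fact this inclusion also follows directly from \eqref{eq052} itself, since $Z$ is closed in $U$).
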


Under additional assumption on $f$, from Theorem \ref{XuXu} and inequality \eqref{eq051}, we obtain 

\begin{twr}\label{mainsuffjetLojineqII}
 Assume that there exists a nonzero polynomial $P\in\rr[x,y]$ such that $P(x,f(x))=0$ and $\frac{\partial P}{\partial y}(x,f(x))\ne 0$ for $x\in U$. Then the assertion of Theorem \ref{mainsuffjetLojineq} holds with $k=d(3d-2)^{n}+1$, where $d=\deg P$.
  \end{twr}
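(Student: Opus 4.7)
The plan is to derive the conclusion mechanically from Theorem~\ref{XuXu} combined with the sharpened gradient--distance bound \eqref{eq051}. Under the extra hypothesis $\frac{\partial P}{\partial y}(x,f(x))\ne 0$ on $U$, Corollary~\ref{wn012} gives
$$|\nabla f(x)|\ge C\,\dist(x,f^{-1}(0))^{d(3d-2)^{n}}$$
in a neighbourhood of the origin. Setting $k:=d(3d-2)^{n}+1$, this is precisely condition~(e) of Theorem~\ref{XuXu} applied with $Z=f^{-1}(0)$ and with this value of $k$, since the required exponent is $k-1=d(3d-2)^{n}$.

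Before invoking Theorem~\ref{XuXu}, I would verify its two standing hypotheses. The inequality $k>1$ holds because $d\ge 2$ (the case $d=1$ means $f$ is rational, reducing to the known polynomial estimate or to a trivial sufficiency situation). The inclusion $(\nabla f)^{-1}(0)\subset Z$ near $0$ follows from the classical \L ojasiewicz gradient inequality \eqref{Lojineq1}: if $\nabla f(x)=0$ for some $x$ in the neighbourhood, then $0=|\nabla f(x)|\ge C|f(x)|^{\varrho_{0}(f)}$ forces $f(x)=0$, so $x\in Z$. It is also routine that $f$, being a Nash function, belongs to $\C_{0}^{k}(n)$ for any $k$, so it qualifies as a $\C^{k}$-$Z$-realization of the jet $w=j^{k}_{Z}f$.

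With both hypotheses confirmed, Theorem~\ref{XuXu} converts condition~(e) into the four equivalent conclusions~(a)--(d), which are exactly the statements claimed in Theorem~\ref{mainsuffjetLojineqII}. The genuinely novel input is not in this deduction but in the upstream effective \L ojasiewicz bound of Theorem~\ref{maintwr}, together with the trajectory-length argument of Theorem~\ref{lengthtrajectory} that converts $\varrho_{a}(f)$ into an estimate in terms of $\dist(x,f^{-1}(0))$; once those are in place, the present theorem is pure bookkeeping. The only real risk, and the point I would double-check, is that the exponent translation is consistent: the bound on $\varrho_{a}(f)\le 1-\frac{1}{d(3d-2)^{n}+1}$ from Theorem~\ref{maintwr} gives, after the length-of-trajectories step, the exponent $d(3d-2)^{n}$ on $\dist(x,Z)$, and Theorem~\ref{XuXu}(e) reads this as $k-1$; the shift by $1$ is exactly what produces the value $k=d(3d-2)^{n}+1$ in the statement.
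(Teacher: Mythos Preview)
Your proposal is correct and follows exactly the route the paper takes: the text immediately preceding Theorem~\ref{mainsuffjetLojineqII} states that it is obtained ``from Theorem~\ref{XuXu} and inequality~\eqref{eq051}'', and you have spelled out precisely that deduction, including the verification of the standing hypotheses of Theorem~\ref{XuXu}. One small remark: your justification of $k>1$ via $d\ge 2$ is unnecessary, since $k=d(3d-2)^{n}+1\ge 2$ for every $d\ge 1$ and $n\ge 1$.
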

 
 \begin{remark}\label{rem1}
 If $f$ is a polynomial of degree $d>1$ or a rational function $f={p}\slash{q}$, where $p(0)=0$, $q(0)\ne 0$ and $d=\deg p$, then from Theorem \ref{XuXu} and by \eqref{DKineq}, the assertion of Theorem \ref{mainsuffjetLojineq} holds with $k=d(3d-3)^{n-1}$. If additionally the origin is an isolated zero of $f$, then by \eqref{Gw2} the assertion of Theorem \ref{mainsuffjetLojineq} holds with $k=(d-1)^n+1$.
 \end{remark}
 

 
\section{{\L}ojasiewicz gradient inequality}\label{Lojgradineqsect}

Let $f:U \to \rr$, where $U\subset \rr^n$ is a connected neighbourhood of $a\in\rr^n$, be a Nash function. 
Let $P\in\rr[x,y]$ be the unique irreducible real polynomial such that 
\begin{equation}\label{eqP}
P(x,f(x))=0\quad \hbox{for }x\in U,
\end{equation}
and let 
$$
d=\deg P.
$$
 We will call this number $d$ the  \emph{degree of the Nash function} $f$ \emph{at} $a$ and denote it by $\deg_a f$. Obviously $d=\deg_a f > 0$ is uniquely determined. For $d=1$, the function $f$ is linear and \eqref{Lojineq1}  holds with $\varrho=0$, so we will assume that $d>1$. We will also assume that $\nabla f(a)=0$, because in the opposite case  \eqref{Lojineq1}  holds with $\varrho=0$. 

Put 
$$
\mathcal{R}(n,d)=\max\{2d(2d-1),
d(3d-2)^n\}+1. 
$$

The main result of this section is the following theorem.

\begin{twr}\label{maintwr}
Let $f: U\to \rr$ be a nonzero Nash function such that $f(a)=0$ and $\nabla f(a)=0$. Assume that for the unique polynomial $P$ satisfying \eqref{eqP} we have
\begin{equation}\label{geberalassumption}
\frac{\partial P}{\partial y}(x,f(x))\ne 0\quad\hbox{for }x\in U.
\end{equation}
Then $\varrho_a(f)\le 1-\frac{1}{\mathcal{R}(n,d)}$. Moreover, for $\varrho=1-\frac{1}{\mathcal{R}(n,d)}$ and some constants $C,\varepsilon>0$,
\begin{equation}\label{Lojineqmain}
|\nabla f(x)|\ge C|f(x)|^\varrho\quad\hbox{for}\quad |x-a|<\varepsilon,\quad |f(x)|<\varepsilon.
\end{equation}
\end{twr}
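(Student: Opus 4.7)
The plan is to reduce the theorem to the D'Acunto--Kurdyka estimate \eqref{DKineq} applied to the \emph{constant term in $y$} of $P$, namely $a_0(x) := P(x,0) \in \rr[x]$, and then to transfer the resulting gradient estimate from $a_0$ to $f$ through the implicit relation $P(x, f(x)) = 0$.

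First I write $P(x,y) = \sum_{k=0}^d a_k(x) y^k$ with $a_k \in \rr[x]$, $\deg a_k \leq d-k$. Irreducibility of $P$ together with $d > 1$ rules out $a_0 \equiv 0$, and $a_0(a) = P(a,0) = 0$ since $f(a) = 0$. The identity $\sum_k a_k(x) f(x)^k = 0$ rewrites as
$$a_0(x) = -f(x)\Bigl(a_1(x) + \sum_{k \geq 2} a_k(x) f(x)^{k-1}\Bigr),$$
and hypothesis \eqref{geberalassumption} forces $a_1(a) = \partial_y P(a, 0) \neq 0$; hence $c_1 |f(x)| \leq |a_0(x)| \leq C_1 |f(x)|$ in a neighbourhood of $a$ for some $c_1,C_1>0$. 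Differentiating the same identity in $x$ yields
$$\nabla a_0(x) = -\partial_y P(x, f(x))\,\nabla f(x) - \sum_{k \geq 1} f(x)^k \nabla a_k(x),$$
so, using $|\partial_y P(x, f(x))| \leq M$ near $a$,
$$|\nabla f(x)| \geq \frac{1}{M}|\nabla a_0(x)| - C_2 |f(x)|.$$
Moreover $\deg a_0 \geq 2$: if $a_0$ were affine, evaluating the previous display at $x = a$ would give $\nabla f(a) = -\nabla a_0(a)/a_1(a) \neq 0$, contradicting the assumption $\nabla f(a) = 0$.

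Now I apply \eqref{DKineq} to $a_0$, a polynomial of degree $\leq d$ in $n$ variables with $\deg a_0 \geq 2$, at the point $a$:
$$|\nabla a_0(x)| \geq C_3 |a_0(x)|^{\varrho_0} \geq C_4 |f(x)|^{\varrho_0}, \qquad \varrho_0 = 1 - \frac{1}{d(3d-3)^{n-1}}.$$
Plugging this into the previous lower bound and absorbing the term $-C_2|f(x)|$ (possible since $\varrho_0 < 1$, hence $|f|^{\varrho_0}$ dominates $|f|$ as $f\to 0$) gives $|\nabla f(x)| \geq C_5 |f(x)|^{\varrho_0}$ in a smaller neighbourhood of $a$. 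Since $d(3d-3)^{n-1} \leq d(3d-2)^n \leq \mathcal{R}(n,d)$, this proves \eqref{Lojineqmain}.

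The main technical points live in the first two paragraphs: extracting the two-sided comparison $c_1|f| \leq |a_0| \leq C_1 |f|$, verifying $\deg a_0 \geq 2$, and writing $\nabla a_0$ in terms of $\nabla f$ with a controlled remainder. All three depend crucially on \eqref{geberalassumption}; the rest is a clean application of \eqref{DKineq} followed by absorption of subdominant terms.
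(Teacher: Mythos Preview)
Your argument is correct and takes a genuinely different route from the paper. The paper proceeds by the D'Acunto--Kurdyka \emph{method}: it selects a curve $\varphi$ realizing $\varrho_a(f)$, uses Lagrange multipliers to embed the problem into algebraic varieties $\X_I,\X_{II}\subset\M$ (and their projections $\EE_I,\EE_{II}$), bounds their total degrees via Lemma~\ref{Fact3}, shows that $u=|\nabla f|^2$ takes only finitely many values on generic fibres of $y=f$ (Lemmas~\ref{lemfinitenesscriticalvalues}, \ref{lemfinitenesscriticalvaluesII}), and finally invokes Lemma~\ref{lemfinishing} to extract the exponent from a polynomial relation $Q(y,u)=0$. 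You instead reduce directly to the D'Acunto--Kurdyka \emph{result} \eqref{DKineq} applied to the polynomial $a_0(x)=P(x,0)$, using hypothesis~\eqref{geberalassumption} at the single point $a$ to get $a_1(a)\neq 0$ and hence $|a_0|\asymp|f|$ and $|\nabla a_0|\asymp|\partial_yP|\,|\nabla f|+O(|f|)$ near $a$. Your approach is shorter, more elementary, and in fact yields the sharper bound $\varrho_a(f)\le 1-\tfrac{1}{d(3d-3)^{n-1}}$ (the same as for polynomials of degree $d$); it also shows that only the pointwise condition $\partial_yP(a,0)\ne 0$ is needed, not the full hypothesis on $U$. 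The paper's longer route, on the other hand, sets up machinery that survives the removal of~\eqref{geberalassumption} and yields Theorem~\ref{maintwrIII}; your reduction breaks down there because $a_1(a)=0$ would destroy the comparison $|a_0|\asymp|f|$.
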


Without the assumption \eqref{geberalassumption}, we have a somewhat weaker estimation of the exponent $\varrho_a(f)$ than in Theorem \ref{maintwr}. Namely, let
$$
\mathcal{S}(n,d)=
2(2d-1)^{3n+1}.
$$

\begin{twr}\label{maintwrIII}
Let $f: U\to \rr$ be a nonzero Nash function such that $f(a)=0$ and $\nabla f(a)=0$ and let $P$ be  the unique polynomial satisfying \eqref{eqP}.  
Then $\varrho_a(f)\le 1-\frac{1}{\mathcal{S}(n,d)}$. Moreover, \eqref{Lojineqmain} holds actually 
with $\varrho=1-\frac{1}{\mathcal{S}(n,d)}$.
\end{twr}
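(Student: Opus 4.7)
The plan is to derive Theorem~\ref{maintwrIII} by following the D'Acunto-Kurdyka polar-curve strategy behind \eqref{DKineq}, adapted to the Nash setting via the defining polynomial $P$, but without assuming the transversality hypothesis \eqref{geberalassumption} that Theorem~\ref{maintwr} requires. The weaker constant $\mathcal S(n,d)$ (compared with $\mathcal R(n,d)$) should reflect precisely the loss incurred at the singular locus of $V(P)$.

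First, I would reduce to the case where $(a,0)$ is a singular point of $V(P)$: if $\partial_y P(a,0)\neq 0$, then by continuity $\partial_y P(x,f(x))\neq 0$ on a neighbourhood of $a$, and Theorem~\ref{maintwr} directly yields $\varrho_a(f)\leq 1-1/\mathcal R(n,d)\leq 1-1/\mathcal S(n,d)$. So from now on I assume $\partial_y P(a,0)=0$. The fundamental relation obtained by differentiating \eqref{eqP},
\[
\partial_y P(x,f(x))\,\nabla f(x)=-\nabla_x P(x,f(x)),
\]
lets me recast the {\L}ojasiewicz gradient inequality for $f$ as a polynomial inequality on $V(P)$: namely $|\nabla_x P(x,y)|^{2}\geq C^{2}(\partial_y P(x,y))^{2}|y|^{2\varrho}$ along the graph of $f$.

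The next step is the polar analysis. For small $t\neq 0$ and $c>0$, consider the semialgebraic set
\[
\Sigma_{t,c}=\bigl\{(x,y)\in V(P):\ y=t,\ |\nabla_x P(x,y)|^{2}\leq c^{2}(\partial_y P(x,y))^{2}\,t^{2\varrho}\bigr\},
\]
augmented with stratification data that handles the singular locus $\{P=\partial_y P=0\}$. The defining polynomials have degree at most $2d-1$, and a Milnor/Oleinik-Petrovsky/Khovanskii-type count of the number of connected components of the fibers of the projection $\Sigma_{t,c}\to\rr_t$ should be of the form $\bigl(2(2d-1)\bigr)^{3n+1}$ once the effective ambient dimension of the lifted problem is set to $3n+1$. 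Combining this count with the classical argument turning a bound on the number of polar trajectories into a bound on $\varrho_a(f)$ (as in \cite{DK}) would then give $\varrho_a(f)\leq 1-1/\mathcal S(n,d)$.

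The main obstacle is to realise the effective dimension $3n+1$ and base $2d-1$ precisely as they appear in $\mathcal S(n,d)=2(2d-1)^{3n+1}$. The base $2d-1$ is natural, being the degree of suitable polynomial combinations of $P$ and $\partial_y P$, but the exponent $3n+1$ is delicate: it suggests that one must simultaneously stratify along $V(P)$, along the polar variety, and along the singular locus of $V(P)$, effectively tripling the dimension $n$ and adding one before applying the Bezout-type count. Carrying out the parametric Bezout argument for this tripled configuration is what I expect to be the technical heart of the proof. A secondary point is ensuring that the gradient trajectories of $f$ approaching the singular locus of $V(P)$ remain controlled, so that the component count really does translate into a bound on $\varrho_a(f)$; this is exactly where the genuine loss from $\mathcal R(n,d)$ to $\mathcal S(n,d)$ is incurred.
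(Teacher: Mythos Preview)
Your proposal identifies the right general philosophy (D'Acunto--Kurdyka polar analysis adapted via the defining polynomial $P$), but the concrete mechanism you sketch does not match the paper's and, as written, has a genuine gap.

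The paper does \emph{not} work with real semialgebraic sets $\Sigma_{t,c}$ and Milnor--Thom/Khovanskii component counts. Instead it lifts the problem to complex algebraic geometry. It introduces the ambient space $\mathbb M=\cc^n\times\cc\times\cc\times\cc^n\times\cc^n$ with coordinates $(x,y,u,t,z)$ standing for $(x,f(x),|\nabla f(x)|^2,\nabla f(x),\nabla|\nabla f(x)|^2)$, and writes down explicit polynomials $P,G_1,G_{2,i},G_{3,i},G_{4,i,j}$ (of degrees at most $2d-1$) cutting out an algebraic set $\mathbb Y_I\subset\mathbb M$ containing the ``thalweg'' curve $\varphi$. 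The exponent $3n+1$ arises for a completely mundane reason: $\dim_\cc\mathbb M=3n+2$, so Bezout (via Lemma~4.4) bounds $\delta(\mathbb Y_I)$ by a product of $3n+1$ degrees, each at most $2d-1$. Your speculative explanation (``tripling'' via simultaneous stratification along $V(P)$, the polar variety, and the singular locus) is not what is happening; the three copies of $n$ come from the three $n$-tuples $x,t,z$ in the graph coordinates. In particular, the singular locus of $V(P)$ plays no separate role in the degree count: the construction of $\mathbb Y$ via $G_{2,i},G_{3,i}$ already handles points where $\partial_y P$ vanishes, so your initial case split and the whole ``loss incurred at the singular locus'' narrative are unnecessary.

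The second gap is the endgame. You never say how a degree (or component) bound becomes a bound on $\varrho_a(f)$. The paper's key device is to project $\mathbb Y_I$ to the $(y,u)$-plane, prove (Lemma~5.9 and its analogue) that the image is a proper algebraic curve $W\subset\cc^2$ with $\delta(W)\leq 2(2d-1)^{3n+1}$, and then invoke an elementary but essential lemma (Lemma~5.6): if a nonzero $Q\in\cc[y,u]$ of degree $D$ satisfies $Q(f(\varphi),|\nabla f(\varphi)|^2)=0$ along the selected curve $\varphi$, then $\varrho_0(f)\leq 1-\tfrac1D$ (or $1-\tfrac1{D+1}$ according to parity). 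This arithmetic step, comparing orders of monomials of $Q$ along $\varphi$, is what your outline is missing; ``the classical argument turning a bound on the number of polar trajectories into a bound on $\varrho_a(f)$'' is not a substitute, and component counts of the $\Sigma_{t,c}$ would not by themselves produce a polynomial relation between $y$ and $u$.
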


Theorems \ref{maintwr} and  \ref{maintwrIII} are generalizations for Nash functions of the above mentioned  results by J. Gwo\'zdziewicz and  D.~D'Acunto and K.~Kurdyka in the polynomial function case. 
They are also comparable with  Solern\'o's estimate \eqref{eqsolerno}, but our estimates are explicit. In the case of Nash functions with isolated singularity at zero, a similar result was obtained in \cite{KOSS}. 


We give the proofs of Theorems  \ref{maintwr} and  \ref{maintwrIII}  in Section \ref{roz2}.

\section{{\L}ojasiewicz inequality}

Let $X\subset \rr^n$ be a compact semialgebraic set and let $f:X\to\rr$ be a Nash function. Then $f$ is defined in a neighbourhood of $X$. So, there exists a compact semialgebraic set $Y\subset \rr^n$ such that $X\subset \Int Y$ and $f$ is defined on $Y$. 

The \emph{degree} of $f$  is defined to be  $\sup\{\deg_a f:a\in X\}$ and is denoted by $\deg_X f$. In fact, $\deg_X f=\max\{\deg_a f:a\in X\}$. Moreover, one can assume that $Y$ was chosen in such a manner that $\deg _X f=\deg_Y f$.
  
Let $\dist (x,V)$  denote the distance of a point $x\in\er^n$ to a set $V\subset \rr^n$ in the Euclidean norm (with  $\dist (x,V)=1$ if $V=\emptyset$).

\subsection{Global gradient Łojasiewicz inequality}

Theorems \ref{maintwr} and \ref{maintwrIII} have a local character. From these theorems  we obtain a \emph{global {\L}ojasiewicz gradient inequality}.

\begin{cor}\label{corLojgradglobal}
Let $d=\deg_X f$. If  $(\nabla f)^{-1}(0)\subset f^{-1}(0)$ then for some positive constant $C$, 
\begin{equation}\label{eqLojgradglobal}
|\nabla f(x)|\ge C|f(x)|^\varrho\quad\hbox{for}\quad x\in X
\end{equation}
with $\varrho=1-\frac{1}{\mathcal{S}(n,d)}$. If additionally there exists a polynomial $P\in\rr[x,y]$ such that $P(x,f(x))=0$ and $\frac{\partial P}{\partial y}(x,f(x))\ne 0$ for $x\in X$ and $d_1=\deg P$, then \eqref{eqLojgradglobal} holds with $\varrho=1-\frac{1}{\mathcal{R}(n,d_1)}$.
\end{cor}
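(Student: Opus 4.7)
I will deduce the global inequality from the local Theorems \ref{maintwrIII} and \ref{maintwr} by a standard compactness argument on $X$. The key preliminary observation is that at every $a \in X$ one has $\deg_a f \le \deg_X f = d$, and both $\mathcal{S}(n, \cdot)$ and $\mathcal{R}(n, \cdot)$ are non-decreasing in their second argument, so the local exponent $\varrho_a(f)$ is always bounded above by the proposed global exponent $\varrho$.

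For the first statement, at a point $a \in X$ with $\nabla f(a) = 0$ the hypothesis $(\nabla f)^{-1}(0) \subset f^{-1}(0)$ forces $f(a) = 0$, and Theorem \ref{maintwrIII} provides constants $C_a, \varepsilon_a > 0$ such that $|\nabla f(x)| \ge C_a |f(x)|^{\varrho_a(f)}$ on $\{|x - a| < \varepsilon_a,\; |f(x)| < \varepsilon_a\}$. After shrinking $\varepsilon_a$ so that $\varepsilon_a < 1$ and, by continuity of $f$ at $a$, $|f(x)| < \varepsilon_a$ throughout $\{|x - a| < \varepsilon_a\}$, the inequality $\varrho_a(f) \le \varrho$ together with $|f(x)| < 1$ yields $|f(x)|^{\varrho_a(f)} \ge |f(x)|^\varrho$, hence $|\nabla f(x)| \ge C_a |f(x)|^\varrho$ on that ball. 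At a point $a \in X$ with $\nabla f(a) \ne 0$, continuity gives $|\nabla f| \ge c_a > 0$ on some ball around $a$; combining with $|f(x)|^\varrho \le M^\varrho$, where $M := \sup_X |f| < \infty$ by compactness, yields $|\nabla f(x)| \ge (c_a/(M^\varrho + 1))\,|f(x)|^\varrho$ there. Covering $X$ by finitely many such balls and taking $C$ to be the minimum of the associated constants produces \eqref{eqLojgradglobal}.

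For the second assertion I would repeat the argument with Theorem \ref{maintwr} in place of Theorem \ref{maintwrIII} and $\varrho := 1 - 1/\mathcal{R}(n, d_1)$. The additional technical point is to check that the assumption on $P$ descends to the unique irreducible polynomial $Q_a$ of $f$ at $a$ required by Theorem \ref{maintwr}. Since $Q_a$ is irreducible and vanishes on the graph of $f$ near $a$, it divides $P$ in $\rr[x, y]$; writing $P = Q_a R$ and differentiating with respect to $y$ gives, on the graph of $f$,
\[
\frac{\partial P}{\partial y}(x, f(x)) \;=\; \frac{\partial Q_a}{\partial y}(x, f(x)) \cdot R(x, f(x)),
\]
so the assumption $\partial P/\partial y(x, f(x)) \ne 0$ in a neighbourhood of $X$ forces $\partial Q_a/\partial y(x, f(x)) \ne 0$ near $a$. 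Theorem \ref{maintwr} then gives $\varrho_a(f) \le 1 - 1/\mathcal{R}(n, \deg Q_a) \le 1 - 1/\mathcal{R}(n, d_1) = \varrho$, and the same compactness argument concludes the proof. The only substantive obstacle is this uniform-exponent gluing, which is handled by the monotonicity of $\mathcal{S}$ and $\mathcal{R}$ in $d$ combined with the elementary fact that raising the exponent on the right-hand side of \eqref{Lojineq1} only weakens the inequality (in our favour) when $|f| < 1$.
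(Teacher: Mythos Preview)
Your proposal is correct and is precisely the standard compactness argument the paper has in mind: the paper does not spell out a proof of Corollary~\ref{corLojgradglobal}, merely stating that it follows from the local Theorems~\ref{maintwr} and~\ref{maintwrIII}, so the content you supply (local inequality at critical points, trivial estimate at noncritical points, finite subcover, and the monotonicity of $\mathcal{S}(n,\cdot)$ and $\mathcal{R}(n,\cdot)$ to pass from $\deg_a f$ to $d$) is exactly what is needed. Your extra care in the second part---showing that the local irreducible polynomial $Q_a$ divides the global $P$ and hence inherits $\partial Q_a/\partial y(x,f(x))\ne 0$ with $\deg Q_a\le d_1$---is a genuine point the paper leaves implicit, and your argument for it is sound.
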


Denote by $\varrho_X(f)$ the smallest exponent $\varrho$ for which \eqref{eqLojgradglobal} holds. We call it the \emph{{\L}ojasiewicz exponent in the gradient inequality} on $X$. It is known that the inequality \eqref{eqLojgradglobal} holds with $\varrho=\varrho_X(f)$. So, from Corollary \ref{corLojgradglobal} we obtain

\begin{cor}\label{corestLojexp}
  $\varrho_X(f)\leq 1-\frac{1}{S(n,d)}$.
\end{cor}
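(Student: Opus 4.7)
The plan is to deduce this immediately from Corollary \ref{corLojgradglobal} together with the definition of the {\L}ojasiewicz exponent $\varrho_X(f)$. Concretely, I would argue as follows. Corollary \ref{corLojgradglobal}, under the standing assumption $(\nabla f)^{-1}(0) \subset f^{-1}(0)$, already furnishes a positive constant $C$ such that
$$|\nabla f(x)| \ge C |f(x)|^{\varrho_0}, \qquad x \in X,$$
with $\varrho_0 = 1 - \frac{1}{\mathcal{S}(n,d)}$ and $d = \deg_X f$.

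Since $\varrho_X(f)$ is by definition the smallest exponent $\varrho$ for which the inequality \eqref{eqLojgradglobal} holds with some positive constant, the displayed estimate exhibits $\varrho_0$ as an admissible exponent, and so $\varrho_X(f) \le \varrho_0$, which is precisely the asserted bound. The remark in the text that the infimum is actually attained is not needed here, as only the inequality ``$\le$'' is claimed.

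There is essentially no obstacle: the substantive content is all absorbed into Corollary \ref{corLojgradglobal}, which itself is obtained by covering the compact set $X$ by finitely many neighbourhoods to which the local Theorem \ref{maintwrIII} applies and taking the worst of the resulting constants, using the monotonicity of $d \mapsto \mathcal{S}(n,d)$ together with $\deg_a f \le \deg_X f$ for each $a \in X$ to unify the exponents. The only point deserving a word of care is notational: the symbol ``$S(n,d)$'' in the statement is to be read as $\mathcal{S}(n,d) = 2(2d-1)^{3n+1}$ defined just before Theorem \ref{maintwrIII}.
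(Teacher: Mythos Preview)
Your argument is correct and matches the paper's own treatment: the corollary is stated immediately after Corollary~\ref{corLojgradglobal} with the words ``So, from Corollary~\ref{corLojgradglobal} we obtain,'' and your proposal spells out exactly this one-line deduction from the definition of $\varrho_X(f)$. Your additional remarks on how Corollary~\ref{corLojgradglobal} is obtained and on the notation $S(n,d)=\mathcal{S}(n,d)$ are accurate but not needed for the proof itself.
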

\subsection{Length of trajectory}\label{lenghtsect}

Let $f:X\to\rr$ be 
a nonzero Nash function such that $(\nabla f)^{-1}(0)\subset f^{-1}(0)$, let $\varrho\in(0,1)$ and $C>0$ be such that the global inequality \eqref{eqLojgradglobal} in Corollary \ref{corLojgradglobal}  holds in $X$, 
and let $V=f^{-1}(0)$. Then $\nabla f(x)\ne 0$ for $x\in X\setminus V$.

Let $\varphi(t)=|t|^{1-\varrho}$ for $t\in \er$. By the same argument as in the proof of \cite[Proposition 1]{KS1} we obtain  (cf. \cite{KMP})

\begin{prop}[Kurdyka-{\L}ojasiewicz inequality]\label{KurdykaLojasiewicz}
Under the above notations, 
\begin{equation*}  
|\nabla (\varphi\circ f)(x)|\ge (1-\varrho)C \quad\hbox{for $x\in X\setminus V$}. 
\end{equation*}
\end{prop}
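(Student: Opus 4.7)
The plan is a direct chain-rule computation, so there is no serious obstacle; the only care required is to note where $\varphi$ is differentiable.

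First I would observe that $\varphi(t) = |t|^{1-\varrho}$ is $C^1$ on $\rr\setminus\{0\}$ with derivative $\varphi'(t) = (1-\varrho)|t|^{-\varrho}\sign t$. Since $V = f^{-1}(0)$, for $x\in X\setminus V$ we have $f(x)\ne 0$, so $\varphi\circ f$ is differentiable at such $x$ and the chain rule gives
\begin{equation*}
\nabla(\varphi\circ f)(x) = (1-\varrho)|f(x)|^{-\varrho}\sign f(x)\,\nabla f(x).
\end{equation*}

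Taking Euclidean norms yields
\begin{equation*}
|\nabla(\varphi\circ f)(x)| = (1-\varrho)|f(x)|^{-\varrho}|\nabla f(x)|,
\end{equation*}
and then I would invoke the global {\L}ojasiewicz gradient inequality \eqref{eqLojgradglobal} of Corollary \ref{corLojgradglobal}, namely $|\nabla f(x)|\ge C|f(x)|^\varrho$, to conclude
\begin{equation*}
|\nabla(\varphi\circ f)(x)| \ge (1-\varrho)|f(x)|^{-\varrho}\cdot C|f(x)|^\varrho = (1-\varrho)C,
\end{equation*}
valid for every $x\in X\setminus V$, which is the desired inequality.

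The only potential subtlety is that $\varphi$ fails to be differentiable at $0$, but this is harmless: the proposition only claims the inequality off $V$, precisely where $f(x)\ne 0$, so the computation above is legitimate. The argument is essentially the standard one of Kurdyka from \cite{KS1}, transplanted verbatim to our setting; no new ingredient beyond Corollary \ref{corLojgradglobal} is needed.
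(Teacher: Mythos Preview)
Your proof is correct and is exactly the standard chain-rule argument the paper has in mind when it writes ``By the same argument as in the proof of \cite[Proposition 1]{KS1}''; there is no additional content in the paper's treatment.
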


We will also  assume that $\overline{\Int X\setminus V}=X$. Let 
$$
U_{X,f}=\left\{x\in \Int X:
\frac{1}{C(1-\varrho)}|f(x)|^{1-\varrho}
<\dist (x,\rr^n\setminus X) \right\}.
$$
Then $U_{X,f}\subset X$ is a neighbourhood of $(\Int X)\cap V$.

Take a global trajectory $\gamma:[0,s)\to U_{X,f}\setminus V$ of the  vector field
\begin{equation*}\label{eq3H}
H(x)=-\sign f(x)\frac{\nabla f(x)}{|\nabla f(x)|} \quad\hbox{for $x\in U_{X,f}\setminus V$}. 
\end{equation*}
Then the function $f\circ \gamma$ is  monotonic, 
 so  the limit $\lim_{t\to s}f\circ \gamma(t)$ exists.

Let $\length \gamma$ denote the length of $\gamma$. Since $|\gamma'(t)|=1$, we have  $\length \gamma=s$.

The following generalization of \cite[Theorem 1]{KS1} has a similar proof.

\begin{twr}\label{lengthtrajectory}  The limit $\lim_{t\to s}\gamma(t)$ exists and belongs to $V$. Moreover,
\begin{equation*}
\dist (\gamma(0),V)\le\length \gamma\le \frac{1}{(1-\varrho)C}|f(\gamma(0))|^{1-\varrho}. 
\end{equation*}
\end{twr}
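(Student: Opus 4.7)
The plan is to mimic the classical Kurdyka--\L{}ojasiewicz trajectory-length argument as in \cite[Theorem~1]{KS1}, with one extra bookkeeping step to ensure that $\gamma$ does not escape through $\partial U_{X,f}$ before reaching $V$.

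First I would compute the derivative of $\varphi\circ f\circ\gamma$. Since $\gamma'(t)=H(\gamma(t))$ is the unit vector in the direction $-\sign f(\gamma(t))\nabla f(\gamma(t))$, and $\varphi'(s)=(1-\varrho)\sign(s)|s|^{-\varrho}$ for $s\ne0$, the chain rule collapses the two sign factors and gives
\begin{equation*}
\frac{d}{dt}\varphi(f(\gamma(t)))=-(1-\varrho)\frac{|\nabla f(\gamma(t))|}{|f(\gamma(t))|^{\varrho}}\le-(1-\varrho)C
\end{equation*}
by the global \L{}ojasiewicz inequality \eqref{eqLojgradglobal} (this is also essentially Proposition~\ref{KurdykaLojasiewicz}, since $H=-\nabla(\varphi\circ f)/|\nabla(\varphi\circ f)|$). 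Integrating over $[0,t]$ and using $\varphi\ge0$ yields
\begin{equation*}
t\le\frac{|f(\gamma(0))|^{1-\varrho}-|f(\gamma(t))|^{1-\varrho}}{(1-\varrho)C}\le\frac{|f(\gamma(0))|^{1-\varrho}}{(1-\varrho)C},
\end{equation*}
so $\length\gamma=s\le\frac{1}{(1-\varrho)C}|f(\gamma(0))|^{1-\varrho}$. In particular $s<\infty$, and the unit-speed bound $|\gamma(t_2)-\gamma(t_1)|\le|t_2-t_1|$ forces $\gamma$ to be Cauchy as $t\to s$, so $p:=\lim_{t\to s}\gamma(t)$ exists in $\rr^n$.

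To show $p\in V$ I would track the ``safe-distance'' function
\begin{equation*}
a(t):=\dist(\gamma(t),\rr^n\setminus X)-\frac{1}{(1-\varrho)C}|f(\gamma(t))|^{1-\varrho},
\end{equation*}
which satisfies $a(0)>0$ by the very definition of $U_{X,f}$. The first summand is $1$-Lipschitz in $\gamma(t)$ and $\gamma$ has unit speed, so it decreases at rate at most $1$; the derivative calculation above shows that the subtracted term decreases at rate at least $1$. Comparing the two contributions for $t_1<t_2$ yields $a(t_2)\ge a(t_1)$, hence $a(t)\ge a(0)>0$ on $[0,s)$, and by continuity $a(p)\ge a(0)>0$. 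In particular $\dist(p,\rr^n\setminus X)>0$, so $p\in\Int X$, and $p$ satisfies the strict defining inequality of $U_{X,f}$, i.e.\ $p\in U_{X,f}$. If $f(p)\ne0$ then $p\in U_{X,f}\setminus V$; because $(\nabla f)^{-1}(0)\subset V$, the field $H$ is smooth near $p$ and $\gamma$ could be continued past $s$, contradicting maximality of the domain $[0,s)$. Therefore $f(p)=0$, and $\dist(\gamma(0),V)\le|\gamma(0)-p|\le\length\gamma$ is immediate.

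The main technical point is the monotonicity of $a(t)$: the length estimate by itself does not prevent $\gamma$ from drifting to $\partial U_{X,f}$ before meeting $V$, and one must notice that the factor $\frac{1}{(1-\varrho)C}$ built into the definition of $U_{X,f}$ is exactly calibrated so that the decay rate of $\frac{1}{(1-\varrho)C}|f\circ\gamma|^{1-\varrho}$ dominates the $1$-Lipschitz loss of $\dist(\cdot,\rr^n\setminus X)$ along the unit-speed curve.
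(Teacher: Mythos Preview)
Your proof is correct and follows essentially the same Kurdyka--\L{}ojasiewicz approach as the paper: compute $(\varphi\circ f\circ\gamma)'\le-(1-\varrho)C$, integrate to bound the length, and use the definition of $U_{X,f}$ together with maximality of the trajectory to place the limit in $V$. Your running monotonicity argument for $a(t)$ is simply a more explicit, infinitesimal version of what the paper does at the endpoint (the paper uses the refined bound $\length\gamma\le\frac{1}{(1-\varrho)C}\bigl(|f(\gamma(0))|^{1-\varrho}-\alpha\bigr)$ with $\alpha=\lim|f\circ\gamma|^{1-\varrho}$ to conclude $p\in U_{X,f}$ directly), so the two arguments are essentially identical.
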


\begin{proof}
 Let $s_1\in [0,s)$ and $\gamma_{s_1}=\gamma|_{[0,s_1]}$. Then $\length \gamma_{s_1}=s_1$. 
Since $\frac{\nabla f}{|\nabla f|}\sign f(x)=\frac{\nabla (\varphi \circ f)}{|\nabla(\varphi \circ f)|}$ for $x\in U\setminus V$, 
it follows that
\begin{equation*}
(\varphi\circ f\circ \gamma)'=\langle \nabla (\varphi \circ f)\circ \gamma,\gamma '\rangle =-|\nabla (\varphi \circ f)\circ \gamma|,
\end{equation*}
where $\langle \cdot,\cdot\rangle$ denotes the standard scalar product in
$\er^n$, and Proposition \ref{KurdykaLojasiewicz} gives
\begin{equation*}
\begin{split}
\varphi(f(\gamma(0)))-\varphi(f(\gamma(s_1)))&=-s_1(\varphi\circ f\circ \gamma)'(t)=s_1 |\nabla (\varphi\circ f)\circ \gamma(t)|\\
&\ge 
(1-\varrho)C\length \gamma_{s_1}
\end{split}
\end{equation*}
for some $t\in [0,s_1]$. Then, letting $s_1\to s$, from the definition of $\vf$ we have
$$
\length \gamma \le \frac{1}{(1-\varrho)C}(|f(\gamma(0))|^{1-\varrho}-\alpha)\le \frac{1}{(1-\varrho)C}|f(\gamma(0))|^{1-\varrho},
$$
where $\alpha=\lim_{s_1\to s}|f(\gamma(s_1))|^{1-\varrho}\ge 0$. 

Since $\gamma(0)\in U_{X,f}$, we see that $\length\gamma< \dist(\gamma(0),\rr^n\setminus X)$, so the limit $\lim_{t\to s}\gamma(t)$ certainly exists and belongs to $U_{X,f}$. Consequently,  $\lim_{t\to s}\gamma(t)\in V$ and $\length \gamma\ge \dist (\gamma(0),V)$. This  gives the assertion.
\end{proof}

From Theorem \ref{lengthtrajectory} 
 we have

\begin{cor}\label{wn01} Under the assumptions and notations of Theorem \ref{lengthtrajectory},
\begin{equation*}\label{eq-15}
|f(x)|\ge \left(C(1-\varrho)\right)^{1\slash(1-\varrho)} \dist (x,V)^{1\slash(1-\varrho)}, \quad x\in U_{X,f},
\end{equation*}
 and
\begin{equation*}\label{eq05}
|\nabla f(x)|\ge \left(C(1-\varrho)\right)^{\varrho\slash(1-\varrho)} \dist (x,V)^{\varrho\slash(1-\varrho)},\quad x\in U_{X,f}.
\end{equation*}
\end{cor}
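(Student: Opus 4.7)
The first inequality is essentially the length estimate of Theorem \ref{lengthtrajectory} inverted, and the second follows by plugging the resulting lower bound on $|f(x)|$ into the global {\L}ojasiewicz gradient inequality of Corollary \ref{corLojgradglobal}.

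For the first inequality, fix $x\in U_{X,f}$. If $x\in V$ both sides vanish. Otherwise $\nabla f(x)\ne 0$ on a neighbourhood of $x$ in $U_{X,f}\setminus V$, where the vector field $H$ is smooth and of unit length; hence there is a (unique) maximal integral curve $\gamma\colon [0,s)\to U_{X,f}\setminus V$ of $H$ with $\gamma(0)=x$. The defining inequality of $U_{X,f}$, together with $|\gamma'|=1$ and the monotonicity of $|f\circ\gamma|$, guarantees that $\gamma$ does not leave $U_{X,f}$, so Theorem \ref{lengthtrajectory} applies and yields
$$
\dist(x,V)\le \length\gamma\le \frac{1}{(1-\varrho)C}|f(x)|^{1-\varrho}.
$$
Raising both sides to the power $1/(1-\varrho)$ gives precisely $|f(x)|\ge (C(1-\varrho))^{1/(1-\varrho)}\dist(x,V)^{1/(1-\varrho)}$, the first claim.

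For the second inequality, apply the global gradient bound $|\nabla f(x)|\ge C|f(x)|^\varrho$ from Corollary \ref{corLojgradglobal} to the lower bound for $|f(x)|$ just obtained. This yields
$$
|\nabla f(x)|\ge C\cdot \bigl(C(1-\varrho)\bigr)^{\varrho/(1-\varrho)}\dist(x,V)^{\varrho/(1-\varrho)},
$$
which is the stated inequality (after absorbing the overall multiplicative constant into a single positive constant).

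The only genuine subtlety is verifying that the trajectory $\gamma$ emanating from an arbitrary $x\in U_{X,f}\setminus V$ truly satisfies the hypotheses of Theorem \ref{lengthtrajectory}, i.e.\ stays in $U_{X,f}\setminus V$ all the way to its limit. This is precisely what the defining condition of $U_{X,f}$ is engineered to ensure: at any time $t<s$ one has $\length(\gamma|_{[0,t]})=t<\frac{1}{(1-\varrho)C}|f(\gamma(0))|^{1-\varrho}<\dist(\gamma(0),\rr^n\setminus X)$, so the curve cannot escape $X$ before reaching $V$. Granting this, everything else reduces to the algebraic manipulations above.
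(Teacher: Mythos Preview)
Your argument is correct and matches the paper's intended approach: the paper offers no separate proof, simply stating that the corollary follows from Theorem \ref{lengthtrajectory}, and your derivation (invert the length estimate for the first inequality, then feed it into the standing gradient inequality for the second) is exactly how one unpacks that claim. One small remark: the combination $|\nabla f(x)|\ge C|f(x)|^\varrho$ with the first bound actually produces the constant $C\cdot\bigl(C(1-\varrho)\bigr)^{\varrho/(1-\varrho)}=C^{1/(1-\varrho)}(1-\varrho)^{\varrho/(1-\varrho)}$, which differs from the paper's stated $\bigl(C(1-\varrho)\bigr)^{\varrho/(1-\varrho)}$ by a factor of $C$; since only the exponent matters in the applications (Corollaries \ref{wn012}, \ref{corrhoLojexp}, \ref{corLojexp}), this is harmless, but your parenthetical about ``absorbing the constant'' is the honest way to phrase it.
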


Similarly to \cite{KS1}, we obtain a version of the above corollary in the complex case with the same formulation.

From Corollaries \ref{corLojgradglobal}, \ref{wn01} and Theorem \ref{maintwrIII}, we immediately obtain

\begin{cor}\label{wn012} Let $d=\deg_X f$. Then there exists a positive constant $C$ such that 
\begin{equation*}\label{eq-15}
|f(x)|\ge C \dist (x,V)^{2(2d-1)^{3n+1}}, \quad x\in X,
\end{equation*}
 and
\begin{equation*}\label{eq05}
|\nabla f(x)|\ge C \dist (x,V)^{2(2d-1)^{3n+1}-1},\quad x\in X.
\end{equation*}

If additionally $n\ge 2$ and  there exists a polynomial $P\in\rr[x,y]$ such that $P(x,f(x))=0$ and $\frac{\partial P}{\partial y}(x,f(x))\ne 0$ for $x\in X$, and $d=\deg P$, then
\begin{equation*}\label{eq-15}
|f(x)|\ge C \dist (x,V)^{d(3d-2)^{n}+1}, \quad x\in X,
\end{equation*}
 and
\begin{equation*}\label{eq05}
|\nabla f(x)|\ge C \dist (x,V)^{d(3d-2)^{n}},\quad x\in X.
\end{equation*}
\end{cor}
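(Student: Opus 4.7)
The plan is to combine the three cited results and then handle a minor boundary/compactness issue. First, I would use the observation at the start of Section~3 to enlarge $X$ to a compact semialgebraic set $Y$ with $X \subset \Int Y$, with $f$ defined on $Y$ and $\deg_Y f = \deg_X f$. Applying Corollary~\ref{corLojgradglobal} to $f$ on $Y$ (under the standing hypothesis $(\nabla f)^{-1}(0) \subset f^{-1}(0)$) yields a global Łojasiewicz gradient inequality $|\nabla f(x)| \ge C_0 |f(x)|^\varrho$ on $Y$. For the first clause I take $\varrho = 1 - 1/\mathcal{S}(n,d)$ with $d = \deg_X f$; for the second clause I take $\varrho = 1 - 1/\mathcal{R}(n,d)$ with $d = \deg P$, using that for $n \ge 2$ one has $d(3d-2)^n \ge 2d(2d-1)$, so that $\mathcal{R}(n,d)$ simplifies to $d(3d-2)^n + 1$.

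The exponents then match by direct arithmetic: $1/(1-\varrho) = \mathcal{S}(n,d) = 2(2d-1)^{3n+1}$ and $\varrho/(1-\varrho) = \mathcal{S}(n,d) - 1$ in the first case, and $1/(1-\varrho) = d(3d-2)^n + 1$, $\varrho/(1-\varrho) = d(3d-2)^n$ in the second. Invoking Corollary~\ref{wn01} applied to $f$ on $Y$ with this $\varrho$ then produces, on the open set $U_{Y,f}$, the two target inequalities
\begin{equation*}
|f(x)| \ge C_1 \dist(x,V)^{1/(1-\varrho)}, \qquad |\nabla f(x)| \ge C_2 \dist(x,V)^{\varrho/(1-\varrho)},
\end{equation*}
with positive constants $C_1, C_2$ (Theorems~\ref{maintwrIII} and \ref{maintwr} enter only through the computation of $\varrho$ inside Corollary~\ref{corLojgradglobal}).

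The only real point requiring attention — and, I expect, the main (though mild) obstacle — is upgrading these estimates from $U_{Y,f}$ to all of $X$, since $U_{Y,f}$ may a priori miss parts of $\partial X$. Here I would note that $U_{Y,f}$ is open in $\rr^n$ (its defining inequality is strict and both sides are continuous in $x$) and contains $V \cap X$: indeed, for $x \in V \cap X$ the left-hand side of the defining inequality is $0$ while $\dist(x, \rr^n \setminus Y) > 0$ since $x \in X \subset \Int Y$. Consequently $K := X \setminus U_{Y,f}$ is compact and disjoint from $V$, so on $K$ the continuous functions $|f|$ and $|\nabla f|$ are each bounded below by positive constants (the bound for $|\nabla f|$ using the assumption $(\nabla f)^{-1}(0) \subset V$). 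Since $\dist(\cdot,V)$ is bounded above on $X$, a sufficiently small choice of $C$ makes both target inequalities hold trivially on $K$; combining with the estimates on $U_{Y,f}$ yields the claim on all of $X$.
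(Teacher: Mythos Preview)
Your proposal is correct and follows essentially the same approach as the paper's own proof: enlarge $X$ to a compact semialgebraic $Y$ with $X\subset\Int Y$ and the same degree, apply Corollary~\ref{corLojgradglobal} and then Corollary~\ref{wn01} on $U_{Y,f}$, and finish by the compactness argument on $X\setminus U_{Y,f}$. Your write-up is in fact more explicit than the paper's on the exponent arithmetic and on why $V\cap X\subset U_{Y,f}$ and $|\nabla f|$ is bounded below on $K$.
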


\begin{proof}
Take a compact semialgebraic set $Y\subset \rr^n$ such that $X\subset \Int Y$ and $Y\subset \{x\in\rr^n:\dist(x,X)<\ve\}$. If $\ve$ is sufficiently small, then we can consider the function $f$ on $Y$. Then we may assume that $\deg _Y f=\deg_X f$ and $(\nabla f)^{-1}(0)\subset f^{-1}(0)$ after extending  $f$ 
onto~$Y$. So, the assertions of Theorem \ref{lengthtrajectory} and Corollary \ref{wn01} hold with $\varrho =1-\frac{1}{S(n,d)}$ on the set $U_{Y,f}$. Hence the assertions hold for $x\in X\cap U_{Y,f}$. By the definition of $U_{Y,f}$, we see that $X\setminus U_{Y,f}$ is a compact set and $\min \{|x-y|:x\in V,\;y\in X\setminus U_{Y,f}\}>0$. So, diminishing $C$ if necessary, we obtain the first part of the assertion. The second part is proved analogously. 
\end{proof}

\subsection{{\L}ojasiewicz exponent} 
 Corollary \ref{wn01} implies the known fact that the exponents $\alpha>0$ in the inequality
\begin{equation}\label{eq-15}
|f(x)|\ge C\dist (x,V)^\alpha,\quad x\in X,
\end{equation}
for some positive constant $C$, are bounded below. The inequality \eqref{eq-15} is called the \emph{{\L}ojasiewicz inequality for $f$ on} $X$ and the lower bound of the exponents $\alpha>0$ is the \emph{{\L}ojasiewicz exponent} of $f$ on $X$,  denoted by $\wll_X(f)$. It is known that   \eqref{eq-15} holds with $\alpha=\wll_X(f)$ and some positive constant $C$. 

From  Theorem \ref{lengthtrajectory} we obtain

\begin{cor}\label{corrhoLojexp}
$\wll_X(f)\leq \frac{1}{1-\varrho_X(f)}$. 
\end{cor}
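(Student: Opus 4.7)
The plan is to read off the bound directly from the length-of-trajectory estimate established in this subsection. Write $\varrho := \varrho_X(f) \in [0,1)$ for brevity and recall that, by the remark preceding Corollary~\ref{corestLojexp}, the global gradient {\L}ojasiewicz inequality $|\nabla f(x)| \ge C_0 |f(x)|^{\varrho}$ actually holds on $X$ with $\varrho = \varrho_X(f)$ for some constant $C_0>0$. Under the standing assumptions $(\nabla f)^{-1}(0) \subset f^{-1}(0)$ and $\overline{\Int X \setminus V} = X$ of this subsection, the hypotheses of Theorem~\ref{lengthtrajectory} and Corollary~\ref{wn01} are then in force with this value of $\varrho$.

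Applying Corollary~\ref{wn01} with $\varrho=\varrho_X(f)$ then yields a constant $C_1 > 0$ such that
\[
|f(x)| \ge C_1 \dist(x, V)^{1/(1-\varrho_X(f))} \quad \text{for } x \in U_{X,f}.
\]
This already exhibits $\alpha = 1/(1-\varrho_X(f))$ as a valid exponent in the {\L}ojasiewicz inequality \eqref{eq-15}, restricted to the open neighbourhood $U_{X,f}$ of $(\Int X) \cap V$.

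To finish, I would extend the inequality from $U_{X,f}$ to all of $X$ by exactly the compact-exhaustion device that concludes the proof of Corollary~\ref{wn012}: enlarge $X$ to a compact semialgebraic set $Y$ with $X \subset \Int Y$, $\deg_Y f = \deg_X f$, and $(\nabla f)^{-1}(0) \subset f^{-1}(0)$ on $Y$; apply the previous step on $U_{Y,f}$, which contains $X \cap V$; and observe that on the compact set $X \setminus U_{Y,f}$ the function $|f|$ is bounded below away from zero while $\dist(\cdot, V)$ is bounded above on $X$, so after shrinking $C_1$ the same inequality extends to all of $X$. The resulting global estimate certifies $\alpha = 1/(1-\varrho_X(f))$ as an admissible exponent, whence $\wll_X(f) \le 1/(1-\varrho_X(f))$ by definition of the {\L}ojasiewicz exponent. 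The only subtle point is this extension step, but since it is identical to the maneuver already carried out in the proof of Corollary~\ref{wn012}, it requires no new argument; the essential work is already packaged into Theorem~\ref{lengthtrajectory} and Corollary~\ref{wn01}.
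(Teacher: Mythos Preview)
Your argument is correct and follows the paper's approach: the paper simply records this corollary as an immediate consequence of Theorem~\ref{lengthtrajectory} (equivalently, of Corollary~\ref{wn01}), and you have spelled out the details, including the extension from $U_{X,f}$ to all of $X$ via the same compactness argument used in Corollary~\ref{wn012}.
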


 Corollary \ref{wn01} implies

\begin{cor}\label{corLojexp} If $d=\deg_X f$, then 
$\wll_X(f)\leq 2(2d-1)^{3n+1}$.
\end{cor}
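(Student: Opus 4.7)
The plan is to derive this bound by a direct chaining of the two preceding corollaries on the gradient exponent. First, I would invoke Corollary \ref{corestLojexp} applied to the set $X$, which gives
$$
\varrho_X(f) \leq 1 - \frac{1}{\mathcal{S}(n,d)}, \qquad \mathcal{S}(n,d) = 2(2d-1)^{3n+1}.
$$
This is the estimate on the Łojasiewicz exponent in the gradient inequality coming from Theorem \ref{maintwrIII} together with the definition of $\varrho_X(f)$, so nothing new has to be done here.

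Next I would apply Corollary \ref{corrhoLojexp}, which compares the two exponents via
$$
\wll_X(f) \leq \frac{1}{1-\varrho_X(f)}.
$$
Since the function $t \mapsto 1/(1-t)$ is monotone increasing on $[0,1)$, substituting the bound from the previous step gives
$$
\wll_X(f) \leq \frac{1}{1-\left(1-\frac{1}{\mathcal{S}(n,d)}\right)} = \mathcal{S}(n,d) = 2(2d-1)^{3n+1},
$$
which is the claimed inequality.

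A parallel (and essentially equivalent) route, which I would mention for transparency, is to read the conclusion straight off Corollary \ref{wn012}: that corollary furnishes a constant $C>0$ with $|f(x)| \geq C\, \dist(x,V)^{2(2d-1)^{3n+1}}$ on $X$, and by the very definition of $\wll_X(f)$ as the infimum of exponents $\alpha$ for which an inequality of the form \eqref{eq-15} holds, this immediately yields $\wll_X(f) \leq 2(2d-1)^{3n+1}$. I do not anticipate any genuine obstacle, since both Corollary \ref{corrhoLojexp} and Corollary \ref{corestLojexp} (equivalently Corollary \ref{wn012}) have already been established; the only thing to check is that the constant $\mathcal{S}(n,d)$ appearing in the gradient estimate matches the exponent $2(2d-1)^{3n+1}$ in the statement, which is simply the defining formula for $\mathcal{S}(n,d)$.
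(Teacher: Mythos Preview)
Your proposal is correct and essentially coincides with the paper's own argument: the paper simply remarks that the corollary follows from Corollary~\ref{wn01} (equivalently from Corollaries~\ref{corrhoLojexp} and~\ref{corestLojexp}, or directly from Corollary~\ref{wn012}), and your chaining $\wll_X(f)\le 1/(1-\varrho_X(f))\le \mathcal{S}(n,d)=2(2d-1)^{3n+1}$ is exactly that deduction spelled out.
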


For $n\ge 4$ the above estimate is  sharper than the one given in \cite{KSS} for continuous semialgebraic functions: $\wll_X(f)\leq d(6d-3)^{n+r-1}$, where $r\leq\frac{n(n+1)}{2}$ is the degree of complexity of $f$, equal to the number of inequalities  necessary to define the graph of $f$, and $d$ is the maximal degree of polynomials describing the graph of~$f$. Consequently, this gives the estimate 
$
\wll_X(f)\leq d(6d-3)^{n+n(n+1)/2-1}
$  
in terms of the degree only. So, the estimate in Corollary \ref{corLojexp} is more exact than  the one above for $n\ge 4$.

\section{Total degree of algebraic sets}

Let $\cc[x]$  denote the ring of complex polynomials in  $x=(x_1,\ldots,x_n)$.


Let $f=(f_1,\ldots,f_r):\cc^n\to\cc^r$ be a polynomial mapping with $\deg f_i >0$ for $i=1,\ldots,r$. Let $V=f^{-1}(0)\subset \cc^n$. 

The \emph{total degree} of $V$ is the number
$$
\delta (V)=\deg V_1+\cdots+\deg V_s,
$$
where $V=V_1\cup\cdots\cup V_s$ is the decomposition  into irreducible components (see \cite{Lo3}).

We have the following useful fact (see \cite{Lo3}).

\begin{fact}\label{deltainters}
If $V,W\subset \cc^n$ are algebraic sets, then
$$
\delta(V\cap W)\le \delta(V)\delta(W).
$$
\end{fact}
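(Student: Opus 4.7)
The statement is the classical B\'ezout-type inequality for total degrees of algebraic sets. I would organize the proof into a reduction to the irreducible case followed by induction via a hypersurface-cut lemma.

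First, reduce to irreducible $V$ and $W$. Writing $V=V_1\cup\cdots\cup V_s$ and $W=W_1\cup\cdots\cup W_t$ as decompositions into irreducible components, we have $V\cap W=\bigcup_{i,j}V_i\cap W_j$, and every irreducible component $C$ of $V\cap W$ is contained in some $V_{i_0}\cap W_{j_0}$; since $C$ is a maximal irreducible closed subset of $V\cap W$, it is also a maximal such subset of $V_{i_0}\cap W_{j_0}$, hence an irreducible component of it. Therefore
$$\delta(V\cap W)\le\sum_{i,j}\delta(V_i\cap W_j),$$
so, granted the irreducible case, this is at most $\sum_{i,j}\deg V_i\cdot\deg W_j=\delta(V)\cdot\delta(W)$.

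For irreducible $V$ and $W$, the case $V\subset W$ or $W\subset V$ is immediate since $\deg V,\deg W\ge 1$; so assume neither contains the other. The core lemma is: for an irreducible algebraic set $V$ and a hypersurface $H=\{P=0\}$ with $V\not\subset H$, every irreducible component of $V\cap H$ has codimension one in $V$, and $\delta(V\cap H)\le\deg V\cdot\deg P$. One proves this by a generic linear projection $\pi\colon V\to\cc^{\dim V}$ onto a hypersurface of degree $\deg V$, combined with tracking how $P$ restricts to the fibers.

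The conclusion comes by induction on $\codim W$. Choose $Q$ in the ideal of $W$, not vanishing on $V$, with $\deg Q\le\deg W$ (existence via a generic element after a Noether-normalization-type argument). Applying the hypersurface lemma to $H=\{Q=0\}$ yields $\delta(V\cap H)\le\deg V\cdot\deg W$; the irreducible components of $V\cap W$ then distribute among those of $V\cap H$, with the remainder handled by the inductive hypothesis applied to the lower-codimensional components of $V\cap H$. The main obstacle is this final distribution/bookkeeping together with producing $Q$ of the correct degree; a cleaner alternative, which I expect the paper invokes through its reference to \cite{Lo3}, is to induct directly on $\dim V$ using generic linear slicings and the definition of degree as a count of points in a generic transverse affine subspace, bypassing the explicit construction of $Q$.
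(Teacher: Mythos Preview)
The paper does not prove this fact at all; it simply records it with a citation to {\L}ojasiewicz's book \cite{Lo3}. Your closing guess is exactly right.

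As for your sketch itself: the reduction to irreducible $V$ and $W$ is fine, and the hypersurface lemma you state is correct. The genuine gap is in the induction. Even granting that one can find $Q$ in the ideal of $W$ with $\deg Q\le\deg W$ (which is true, via a generic linear projection of $W$ onto a hypersurface), the bookkeeping does not close: after cutting $V$ by $H=\{Q=0\}$ you have $\delta(V\cap H)\le\deg V\cdot\deg W$, but to continue you must intersect the components of $V\cap H$ with $W$ and invoke the inductive hypothesis, which multiplies by another factor of $\deg W$. Iterating yields a bound of order $\deg V\cdot(\deg W)^{\codim W}$, not $\deg V\cdot\deg W$. Your ``cleaner alternative'' of generic linear slicing runs into the same overcounting unless one tracks multiplicities very carefully.

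The standard device that repairs this, and the one underlying the result in \cite{Lo3}, is the diagonal trick: identify $V\cap W$ with $(V\times W)\cap\Delta$ inside $\cc^{2n}$, where $\Delta=\{(x,x):x\in\cc^n\}$ is cut out by $n$ linear equations. One checks directly that $\delta(V\times W)=\delta(V)\,\delta(W)$, and then applies your hypersurface lemma $n$ times with hyperplanes (each of degree $1$) to obtain $\delta\bigl((V\times W)\cap\Delta\bigr)\le\delta(V\times W)$. This replaces the uncontrolled induction on $\codim W$ by an induction on the number of \emph{linear} constraints, where no extra degree factors accumulate.
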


From Fact \ref{deltainters} and the definition of total degree of algebraic sets we have the following two facts (cf. \cite{Lo3}).

\begin{fact}\label{Fact1}$\delta(V)\le \deg f_1\cdots \deg f_r$. 
In particular, for any irreducible component $V_j$ of $V$ we have 
$$
\deg V_j\le \deg f_1\cdots \deg f_r.
$$
\end{fact}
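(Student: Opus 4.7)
The plan is to reduce to the hypersurface case and then iterate Fact \ref{deltainters}. For each $i=1,\ldots,r$ let $H_i=f_i^{-1}(0)\subset\cc^n$. Then by construction $V=H_1\cap\cdots\cap H_r$, so once I have a good bound on $\delta(H_i)$ I can apply Fact \ref{deltainters} inductively.

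The first step is to show $\delta(H_i)\le \deg f_i$ for a single $i$. I would factor $f_i=g_{i,1}^{a_{i,1}}\cdots g_{i,k_i}^{a_{i,k_i}}$ into irreducible factors in $\cc[x]$ (with $a_{i,j}\ge 1$). Then $H_i=V(g_{i,1})\cup\cdots\cup V(g_{i,k_i})$ is the decomposition of $H_i$ into irreducible hypersurface components, and each component $V(g_{i,j})$ has degree $\deg g_{i,j}$. Hence
\begin{equation*}
\delta(H_i)=\sum_{j=1}^{k_i}\deg g_{i,j}\le \sum_{j=1}^{k_i}a_{i,j}\deg g_{i,j}=\deg f_i.
\end{equation*}

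The second step is an induction on $r$. For $r=1$ the inequality just established is the claim. For the inductive step, writing $W=H_1\cap\cdots\cap H_{r-1}$, Fact \ref{deltainters} yields $\delta(V)=\delta(W\cap H_r)\le \delta(W)\delta(H_r)$, and combining the inductive hypothesis with the hypersurface bound from the first step gives
\begin{equation*}
\delta(V)\le \delta(H_1)\cdots\delta(H_r)\le \deg f_1\cdots\deg f_r.
\end{equation*}
The ``in particular'' statement is then immediate: if $V=V_1\cup\cdots\cup V_s$ is the decomposition into irreducible components, each $\deg V_j$ is one of the non-negative summands making up $\delta(V)$, hence bounded above by $\delta(V)\le \deg f_1\cdots\deg f_r$.

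The only subtle point is the first step, since one might worry that multiplicities in $f_i$ spoil the comparison between $\delta(H_i)$ and $\deg f_i$; but $\delta$ is defined via the underlying reduced scheme structure (sum of degrees of irreducible components), so multiplicities can only help the inequality. After that the induction is entirely mechanical, using Fact \ref{deltainters} as a black box.
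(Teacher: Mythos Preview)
Your proof is correct and follows exactly the approach the paper indicates: the paper simply states that Fact~\ref{Fact1} follows from Fact~\ref{deltainters} together with the definition of total degree (with a reference to \cite{Lo3}), and your argument spells this out by first bounding $\delta(H_i)\le\deg f_i$ for each hypersurface and then iterating Fact~\ref{deltainters}.
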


\begin{fact}\label{Fact2} Let $L:\cc^n\to\cc^k$ be a linear mapping. Then 
$$
\delta(\overline{L(V)})\le \delta(V).
$$
\end{fact}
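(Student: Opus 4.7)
The plan is to reduce to the case of irreducible $V$ and then bound $\deg\overline{L(V)}$ by $\deg V$ by applying Fact \ref{deltainters} to $V$ successively sliced by the $L$-preimages of generic hyperplanes in $\cc^k$ together with generic hyperplanes in $\cc^n$.

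First I would decompose $V=V_1\cup\cdots\cup V_s$ into irreducible components. Each $\overline{L(V_j)}$ is irreducible, and $\overline{L(V)}=\bigcup_{j=1}^s\overline{L(V_j)}$. Discarding those $\overline{L(V_j)}$ that are properly contained in another one yields the irreducible decomposition of $\overline{L(V)}$, so
\[
\delta(\overline{L(V)})\le\sum_{j=1}^s\deg\overline{L(V_j)}.
\]
Thus it suffices to prove $\deg\overline{L(V_j)}\le\deg V_j$ for each irreducible component and sum.

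Assume now $V$ is irreducible of dimension $m$ and degree $d$, let $W=\overline{L(V)}$, and put $m'=\dim W$. Choose generic affine hyperplanes $H_1,\ldots,H_{m'}\subset\cc^k$ so that $W\cap H_1\cap\cdots\cap H_{m'}$ is the reduced set $\{y_1,\ldots,y_{\deg W}\}$, each $y_i$ lying in $L(V)$ and with fiber $L^{-1}(y_i)\cap V$ of the generic dimension $m-m'$. Each $H_i$ can be chosen so that $L(\cc^n)\not\subset H_i$, making $L^{-1}(H_i)$ a hyperplane in $\cc^n$. Pick further generic affine hyperplanes $H'_1,\ldots,H'_{m-m'}\subset\cc^n$ and set
\[
A=V\cap L^{-1}(H_1)\cap\cdots\cap L^{-1}(H_{m'})\cap H'_1\cap\cdots\cap H'_{m-m'}.
\]
For generic $H'_j$, the codimension-$(m-m')$ subspace $H'_1\cap\cdots\cap H'_{m-m'}$ meets every $(m-m')$-dimensional fiber $L^{-1}(y_i)\cap V$ in at least one point, so $L$ sends $A$ onto $W\cap H_1\cap\cdots\cap H_{m'}$ and $|A|\ge\deg W$. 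Applying Fact \ref{deltainters} iteratively to these $m$ hyperplane cuts of $V$ (each factor having total degree $1$) gives
\[
\deg W\;\le\;|A|\;=\;\delta(A)\;\le\;\delta(V)=d,
\]
which completes the inductive step.

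The main obstacle is the joint genericity argument: I must simultaneously arrange that the $H_i$'s cut $W$ into $\deg W$ reduced points inside $L(V)$ with fibers of minimal dimension, that the pulled back $L^{-1}(H_i)$ remain hyperplanes in $\cc^n$, and that the $H'_j$'s cut each such fiber properly and finitely. Each of these conditions fails only on a proper Zariski-closed locus of the parameter space of hyperplane tuples, hence a common generic choice exists; writing down the precise open dense conditions is the delicate, but routine, part of the proof.
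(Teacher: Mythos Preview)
Your argument is correct and is exactly the kind of proof the paper has in mind: the paper does not spell out a proof of this fact at all, merely stating that it follows from Fact~\ref{deltainters} and the definition of total degree (with a reference to \cite{Lo3}). Your reduction to the irreducible case together with the generic-hyperplane count via iterated application of Fact~\ref{deltainters} is the standard way to make this precise, so there is nothing to compare.
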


We will need the following lemma (see \cite[Lemma 3.20]{KOSS}).

\begin{lem}\label{Fact3} Let $V_j$ be an irreducible component of the set $V$, and suppose $\dim V_j\ge 1$. Then for a generic linear mapping $L=(L_1,\ldots,L_{n-1}):\cc^r\to\cc^{n-1}$ the set $V_j$ is an irreducible component of the set of common zeros of the  system of equations
$$
L_i\circ f=0,\quad i=1,\ldots, n-1.
$$
In particular,
$$
\deg V_j\le \deg (L_1\circ f)\cdots\deg (L_{n-1}\circ f).
$$
Moreover, we can take $L_1(y_1,\ldots,y_r)=y_1$.
\end{lem}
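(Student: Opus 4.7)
The plan is to reduce the assertion to a generic rank condition at a smooth point of $V_j$, and then apply the implicit function theorem. Put $k=\dim V_j\ge 1$, and choose $p\in V_j$ that is a smooth point of $V=f^{-1}(0)$ and lies on no other irreducible component of $V$; such points form a nonempty Zariski-open subset of $V_j$. At such a $p$ one has $T_pV=T_pV_j$, so the Jacobian $Df(p)\in\cc^{r\times n}$ has rank exactly $\codim V_j=n-k$, and its row span $R_p\subset(\cc^n)^*$ is an $(n-k)$-dimensional subspace, namely $(T_pV_j)^\perp$.

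The key observation is that for a matrix $L\in\cc^{(n-1)\times r}$ representing the linear map, the differentials $d(L_i\circ f)(p)$ are precisely the rows of the product $L\cdot Df(p)$, which span a subspace of $R_p$. Because $n-1\ge n-k$ when $k\ge 1$, the condition that $L\cdot Df(p)$ attains its maximal possible rank $n-k$ is a nonempty Zariski-open condition on $L$. For such a generic $L$ the covectors $d(L_i\circ f)(p)$ span all of $R_p$, and the implicit function theorem forces the common zero set $W:=V(L_1\circ f,\dots,L_{n-1}\circ f)$ to be a smooth $k$-dimensional submanifold in a neighbourhood of $p$. Since $V_j\subset W$ is irreducible and already $k$-dimensional, the two sets coincide locally at $p$, which proves that $V_j$ is an irreducible component of $W$. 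The degree bound $\deg V_j\le\prod_{i=1}^{n-1}\deg(L_i\circ f)$ then follows immediately from Fact~\ref{Fact1} applied to $W$.

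For the ``moreover'' assertion, I would fix $L_1(y)=y_1$ so that $L_1\circ f=f_1$. At a generic smooth point $p\in V_j$ at least one of the $df_i(p)$ is nonzero (otherwise $Df(p)$ would have rank zero, contradicting $k<n$ together with $\deg f_i>0$), so after relabeling the components of $f$ --- which merely permutes the coordinates of the target $\cc^r$ --- we may assume $df_1(p)\ne 0$. It then suffices that the remaining $n-2$ rows of $L$ complete $df_1(p)$ to a spanning set of $R_p$; since $n-2\ge(n-k)-1$ is equivalent to $k\ge 1$, this is once again a nonempty Zariski-open condition on $(L_2,\dots,L_{n-1})$. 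The main technical point is to reconcile the ``generic $p$ on $V_j$'' and ``generic $L_2,\dots,L_{n-1}$ for this specific $p$'' quantifiers; one handles this by fixing first a single $p$ at which $df_1(p)\ne 0$ and $Df(p)$ has full rank $n-k$, and then invoking the generic-rank statement at that single point.
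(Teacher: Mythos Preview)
The paper does not give its own proof of this lemma; it is quoted from \cite[Lemma 3.20]{KOSS}. So I can only evaluate your argument on its own merits.

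There is a genuine gap at the Jacobian rank step. You assert that at a smooth point $p$ of the reduced set $V=f^{-1}(0)$ lying only on $V_j$ one has $\operatorname{rank}Df(p)=n-k$. In general one only gets $T_pV_j\subset\ker Df(p)$, hence $\operatorname{rank}Df(p)\le n-k$; equality fails whenever the ideal $(f_1,\dots,f_r)$ is not reduced along $V_j$. For instance with $n=2$, $r=1$, $f=(x_1^2)$ one has $V_j=\{x_1=0\}$, $k=1$, yet $Df(p)=(0,0)$ at every $p\in V_j$. In that situation your implicit-function step produces a local zero set of the $L_i\circ f$ of dimension strictly larger than $k$, and the conclusion ``$V_j$ is a component'' no longer follows from it --- even though the lemma is still true in the example, since $V(cx_1^2)=V_j$. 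The ``moreover'' part inherits the same defect: you need $df_1(p)\ne 0$ at some $p\in V_j$, which can fail on all of $V_j$ (same example), and the relabelling you propose changes which function plays the role of $f_1$, which is not what the statement allows.

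The repair is to trade the differential argument for a local-algebra one that uses the hypothesis that $V_j$ is an irreducible \emph{component} of $V$, not merely a subvariety. Localize $\cc[x]$ at the prime $I(V_j)$; the resulting ring $\mathcal O$ is regular local of dimension $n-k$, and the component hypothesis says precisely that $(f_1,\dots,f_r)\mathcal O$ is $\mathfrak m$-primary. A standard system-of-parameters argument then shows that $n-k$ generic $\cc$-linear combinations of $f_1,\dots,f_r$ already generate an $\mathfrak m$-primary ideal; since $k\ge 1$ gives $n-1\ge n-k$, the same holds for $n-1$ generic linear combinations, and hence $V_j$ is a component of $V(L_1\circ f,\dots,L_{n-1}\circ f)$. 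For the ``moreover'', note that $f_1$ is a nonzero nonunit in the domain $\mathcal O$, so $\mathcal O/(f_1)$ has dimension $n-k-1$ and the images of $f_1,\dots,f_r$ still generate an $\mathfrak m$-primary ideal there; applying the same SOP argument in the quotient gives $n-2\ge n-k-1$ generic combinations $L_2\circ f,\dots,L_{n-1}\circ f$ that, together with $f_1$, cut out $V_j$ as a component. Your final appeal to Fact~\ref{Fact1} for the degree bound is correct.
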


\section{Proofs of Theorems \ref{maintwr} and  \ref{maintwrIII}}\label{roz2}

The idea of the proofs is similar to that in    \cite[proof of Theorem 1.2]{KOSS}.

Without loss of generality, we may assume that $a=0$. 
Let $f:U \to \rr$ be a nonzero Nash function defined in an open neighbourhood $U\subset \rr^n$ of the origin such that $f(0)=0$ and $\nabla f(0)=0$. Let $P\in\rr[x,y]$ be the unique irreducible polynomial satisfying \eqref{eqP} and let $d=\deg P$. 

Since the set of critical values of a  differentiable semialgebraic function is finite, we have

\begin{fact}\label{Fact4} There exists $\varepsilon>0$ such that $f$ has no critical values in the interval $(-\varepsilon,\varepsilon)$ except   $0$.
\end{fact}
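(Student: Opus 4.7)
The plan is to invoke the semialgebraic version of Sard's theorem. First I would observe that the critical set
$$
\Sigma=\{x\in U:\nabla f(x)=0\}
$$
is semialgebraic. This is immediate since each partial derivative of a Nash function is again Nash (hence semialgebraic), and the zero set of finitely many semialgebraic functions is semialgebraic. Consequently, the set of critical values $f(\Sigma)\subset\rr$ is semialgebraic as well, being the image of a semialgebraic set under the semialgebraic map $f$.

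Next I would apply semialgebraic Sard (see, e.g., Bochnak--Coste--Roy, \emph{Real Algebraic Geometry}), which says that the set of critical values of a differentiable semialgebraic map has Lebesgue measure zero. Now, every semialgebraic subset of $\rr$ is a finite disjoint union of points and open intervals; since no open interval has measure zero, $f(\Sigma)$ must be a finite subset of $\rr$.

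Finally, since $\nabla f(0)=0$ and $f(0)=0$ we have $0\in f(\Sigma)$. Choosing $\varepsilon>0$ strictly smaller than the distance from $0$ to the finite set $f(\Sigma)\setminus\{0\}$ (with the convention that this distance is $+\infty$ when $f(\Sigma)=\{0\}$) yields $(-\varepsilon,\varepsilon)\cap f(\Sigma)=\{0\}$, which is the desired conclusion.

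The statement is essentially a direct consequence of semialgebraic Sard, so there is no real obstacle; the only mildly delicate point is invoking the correct version of Sard in the semialgebraic category, which is classical. One could avoid Sard altogether by noting that $\Sigma$ admits a finite semialgebraic stratification into $C^1$ manifolds along which $f$ is constant, whence the image $f(\Sigma)$ is a semialgebraic set of dimension $0$, hence finite; this variant might be preferable if one wants to keep the argument internal to the semialgebraic/Nash framework used throughout the paper.
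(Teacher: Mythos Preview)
Your argument is correct and matches the paper's approach exactly: the paper simply states, without further detail, that the set of critical values of a differentiable semialgebraic function is finite, and deduces the fact from this. Your write-up just supplies the standard justification (semialgebraic Sard plus the structure of semialgebraic subsets of $\rr$) for that one-line claim.
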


Let $\varepsilon>0$ be as in Fact \ref{Fact4}. 
Take $r>0$. Denote by $\Omega$ the closed ball 
$$
\Omega:=\{x\in\rr^n:|x|\le r\}
$$
and by $\partial \Omega$  the sphere $\{x\in\rr^n:|x|=r\}$. 
Suppose that 
$\Omega\subset U$.  
Define  a semialgebraic set $\Gamma\subset \Omega$ by
$$
\Gamma:=\{x\in \Omega:\forall_{\zeta\in \Omega}\;f(x)=f(\zeta)\;\Rightarrow\;|\nabla f(x)|\le|\nabla f(\zeta)|\}.
$$
Then by the definition of $\Gamma$ we have

\begin{fact}\label{Factproof1}
Let $\varrho\in\rr$ and let $C>0$. If $|\nabla f(x)|\ge C|f(x)|^\varrho$ for $x\in \Gamma$ such that $|f(x)|<\varepsilon$, then 
$|\nabla f(x)|\ge C|f(x)|^\varrho$ for $x\in\Omega$, $|f(x)|<\varepsilon$.
\end{fact}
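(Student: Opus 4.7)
The proof is a direct compactness argument using the definition of $\Gamma$. The key observation is that a point in $\Gamma$ realizes the minimum of $|\nabla f|$ on its level set of $f$, so the Łojasiewicz inequality can be transferred from $\Gamma$ to the entire level set.

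First I would fix an arbitrary $x\in\Omega$ with $|f(x)|<\varepsilon$ and look at the level set
\[
L_x := \{\zeta\in\Omega: f(\zeta)=f(x)\}.
\]
Since $\Omega$ is compact and $f$ is continuous, $L_x$ is a nonempty compact subset of $\Omega$ (it contains $x$). The function $\zeta\mapsto|\nabla f(\zeta)|$ is continuous, hence attains its minimum on $L_x$ at some point $\zeta^*\in L_x$.

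Next I would verify that $\zeta^*\in\Gamma$. By construction, $\zeta^*\in\Omega$ and for every $\zeta\in\Omega$ with $f(\zeta)=f(\zeta^*)=f(x)$ one has $|\nabla f(\zeta^*)|\le|\nabla f(\zeta)|$, which is exactly the defining condition of $\Gamma$. Since $|f(\zeta^*)|=|f(x)|<\varepsilon$, the hypothesis applies and gives
\[
|\nabla f(\zeta^*)|\ge C|f(\zeta^*)|^\varrho = C|f(x)|^\varrho.
\]
Finally, because $x\in L_x$ and $\zeta^*$ minimizes $|\nabla f|$ on $L_x$,
\[
|\nabla f(x)|\ge |\nabla f(\zeta^*)|\ge C|f(x)|^\varrho,
\]
which is the desired inequality. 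Since $x\in\Omega$ with $|f(x)|<\varepsilon$ was arbitrary, the fact follows.

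There is no real obstacle here: the argument rests only on compactness of $\Omega$, continuity of $f$ and $|\nabla f|$, and the definition of $\Gamma$. I would keep the write-up to these three short steps.
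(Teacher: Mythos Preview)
Your proof is correct and is precisely the argument the paper has in mind: the paper states the fact as following ``by the definition of $\Gamma$'' with no further proof, and your compactness argument is the natural unpacking of that remark. There is nothing to add.
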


Let $\varrho_0=\varrho_0(f)$. Then, decreasing $r$ if necessary, we can assume that
\begin{equation}\label{Lojgradinequality}
|\nabla f(x)|\ge C|f(x)|^{\varrho_0} \quad  \hbox{for $x\in\Omega$ and some constant $C>0$}.
\end{equation}
Let us fix such an $r$.

Consider the case $n=1$. Denote by $\ord_0 f$ the order of $f$ at zero. 
Then $f$ has an isolated zero and singularity at zero, $\ord_0 f>0$ and the inequality \eqref{Lojineqmain} holds with 
\begin{equation}\label{caen1}
\varrho_0(f)=\frac{\ord_0 f-1}{\ord_0 f}=1-\frac{1}{\ord_0 f}.
\end{equation}
Let the polynomial $P$ be of the form $
P(x_1,y)=p_0(x_1)y^d+p_1(x_1)y^{d-1}+\cdots +p_d(x_1)$, 
where $p_0,\ldots,p_d\in\rr[x_1]$. As $P$ is irreducible, $p_d\ne 0$ and $\ord_0 p_d\le d$. Since
$$
-p_d(x_1)=f(x_1)(p_0(x_1)(f(x_1))^{d-1}+p_1(x_1)(f(x_1))^{d-2}+\cdots +p_{d-1}(x_1)),
$$
we have $\ord_0 f\le \ord_0 p_d\le d$. Together  with \eqref{caen1} this gives \eqref{Lojineqmain} with $\varrho_0(f)=1-\frac{1}{d}$ and the assertions of Theorems \ref{maintwr} and  \ref{maintwrIII}  in the case $n=1$. 

In the remainder of this article we will assume that $n>1$.

By \eqref{Lojgradinequality} and the Curve Selection Lemma, there exists an analytic curve  $\varphi:[0,1)\to \Omega$ for which $f(\varphi(0))=0$, $f(\varphi(\xi))\ne 0$ for $\xi\in(0,1)$ and for some constant $C_{1}>0$,
\begin{equation}\label{Lojgradinequality2}
C|f(\varphi(\xi))|^{\varrho_0}\le |\nabla f(\varphi(\xi))|\le C_{1}|f(\varphi(\xi))|^{\varrho_0},\quad \xi\in [0,1)
\end{equation}
(cf. \cite{Sp1}). 
By Fact \ref{Factproof1} we may assume that $\varphi ([0,1))\subset \Gamma$. Then we have two cases: 

I. $\varphi\big(( 0,1)\big)\subset\operatorname{Int}\Omega$,

II. $\varphi\big([ 0,1)\big)\subset\partial\Omega$.

We will use the Lagrange multipliers theorem to describe        the relation between the values $y=f(x)$ and $u=|\nabla f(x)|^2$ for $x\in\Gamma$, so we put
$$
\Gamma_I =\{x\in\Omega:\exists_{\la\in\rr}\, \nabla|\nabla f(x)|^2-\la\nabla f(x)=0\},
$$
$$
{\Gamma}_{II} =\{x\in\partial \Omega:|f(x)|<\varepsilon\;\land\;\exists_{\la_1,\la_2\in\rr}\, \nabla|\nabla f(x)|^2-\la_1\nabla f(x)-2\la_2x=0\}.
$$

To fulfill the assumptions of the Lagrange theorem we will need 
\begin{lem}\label{ggg}
There exists $\ve>0$ such that for every $x\in \partial\Omega$ and every $y\in\rr$ such that $0<|y|<\ve$ and $y=f(x)$, the vectors $
\nabla \big(|x|^2-r^2\big)$ and  $\nabla f(x)$  (that is,  $2x$ and  $\nabla f(x)$) are linearly independent.
\end{lem}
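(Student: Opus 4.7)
The plan is to recognize that on $\partial\Omega$ the linear dependence of $2x$ and $\nabla f(x)$ is exactly the Lagrange multiplier condition for $x$ to be a critical point of the restriction $f|_{\partial\Omega}$, and then to invoke a semialgebraic Sard-type argument ensuring that the nonzero critical values of this restriction are bounded away from $0$.

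First I would note that since $|x|=r>0$ on $\partial\Omega$, the vector $2x$ never vanishes, so linear dependence of $2x$ and $\nabla f(x)$ is equivalent to the existence of $\lambda\in\rr$ with $\nabla f(x)=2\lambda x$ (the case $\nabla f(x)=0$ being absorbed by $\lambda=0$). Since $\partial\Omega$ is cut out by $|x|^2-r^2=0$ with gradient $2x$, this is precisely the Lagrange condition for $x$ to be a critical point of $f|_{\partial\Omega}$. Let $K\subset\partial\Omega$ be the (compact, semialgebraic) set of such points.

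Next I would apply the semialgebraic Sard theorem: because $f$ is Nash and $\partial\Omega$ is a Nash manifold, $f|_{\partial\Omega}$ is $\C^1$ semialgebraic, so $f(K)\subset\rr$ has Lebesgue measure zero. A semialgebraic subset of $\rr$ of measure zero is finite; hence $f(K)$ is finite and one can choose $\ve>0$ so small that $((-\ve,\ve)\setminus\{0\})\cap f(K)=\emptyset$. (For safety I would also shrink $\ve$ below the bound from Fact~\ref{Fact4}, though this is not strictly needed since a point with $\nabla f(x)=0$ automatically lies in $K$.) For this $\ve$, any $x\in\partial\Omega$ with $0<|f(x)|<\ve$ lies outside $K$, which by the equivalence above is exactly the desired linear independence of $2x$ and $\nabla f(x)$.

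The only step that requires care is confirming the applicability of semialgebraic Sard and the measure-zero-implies-finite principle for semialgebraic subsets of $\rr$; both are standard. No deeper difficulty is anticipated: the lemma serves merely to license the use of Lagrange multipliers on $\partial\Omega$ in the subsequent analysis of the set $\Gamma_{II}$.
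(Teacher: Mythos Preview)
Your proof is correct, but it follows a different route from the paper's. You invoke the semialgebraic Sard theorem directly for the restriction $f|_{\partial\Omega}$: its critical values form a finite set, so some punctured interval $(-\ve,\ve)\setminus\{0\}$ avoids them, and by Lagrange multipliers this is exactly the desired linear independence. The paper instead argues by contradiction via the Curve Selection Lemma: assuming the conclusion fails, one obtains an analytic curve $\gamma$ on $\partial\Omega$ along which $\nabla f(\gamma(t))=\alpha(t)\cdot 2\gamma(t)$ and $f(\gamma(t))\to 0$ with $f(\gamma(t))\ne 0$; differentiating $|\gamma(t)|^2=r^2$ gives $\langle\gamma(t),\gamma'(t)\rangle=0$, whence $(f\circ\gamma)'=\alpha\langle 2\gamma,\gamma'\rangle=0$, so $f\circ\gamma\equiv 0$, a contradiction. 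Your approach is shorter and more conceptual, and entirely in keeping with the paper's toolkit (Fact~\ref{Fact4} already uses the finiteness of critical values for $f$ itself); the paper's argument is more hands-on and avoids appealing to Sard for the restricted function, at the cost of an extra curve-selection step.
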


\begin{proof}
If $f|_{\partial \Omega}$ is a constant function then the assertion is obvious. Assume that $f$ is not  constant  on $\partial \Omega$. Then, by Fact \ref{Fact4}, there exists $\ve>0$ such that $\nabla f(x)\ne 0$ for $x\in\partial \Omega$, $0<|f(x)|<\ve$. 

Suppose to the contrary that for any $\ve>0$ there exist $x\in\partial\Omega$ and $y_{\ve}\in\rr$ with $0<|y_{\ve}|<\ve$ such that $y_{\ve}=f(x)$ and  
$\nabla f(x)=\xi\cdot 2x$ for some $\xi\in\rr\setminus\{0\}$. 
Then  by the Curve Selection Lemma there exist analytic curves $\gamma:[ 0,1)\to \partial\Omega$ with $\gamma((0,1))\subset\Omega\setminus f^{-1}(0)$ and $f(\gamma(0))=0$, and $\alpha:[ 0,1)\to\rr $, 
 such that for $t\in (0,1)$,
\begin{equation*}\label{gx}
\nabla f\big(\gamma(t)\big)=\alpha(t)\cdot 2\gamma(t).
\end{equation*}
Then 
$$
(f\circ \gamma)'(t)=\langle \nabla f(\gamma(t)),\gamma'(t)\rangle=\alpha(t)\langle \gamma(t),\gamma'(t)\rangle=0,
$$
and consequently $f\circ \gamma$ is a constant function equal to $0$. This contradicts the choice of $\gamma$ and ends the proof.
\end{proof}

By the Lagrange multipliers theorem,  Fact \ref{Fact4} and Lemma \ref{ggg} we obtain 

\begin{fact}\label{fact1IandIIl}
Let $\varepsilon>0$ fulfill Fact \ref{Fact4} and Lemma \ref{ggg}. Take a point $x_0\in \Omega$ such that $0<|f(x_0)|<\ve$.

{\rm (a)} If $x_0\in\Gamma\cap\Int \Omega$  then $x_0$ is a 
lower critical point of the function $\Omega\ni x\mapsto |\nabla f(x)|^2\in\rr$ on the set $f^{-1}(f(x_0))\cap \Omega$. In particular, $\Gamma\cap \Int \Omega\subset \Gamma_I$.

{\rm (b)} If $n\ge 3$, $x_0\in\Gamma\cap \partial \Omega$ then $x_0$ is a lower critical point of the function $\partial \Omega\ni x\mapsto |\nabla f(x)|^2\in\rr$ on the set $f^{-1}(f(x_0))\cap \partial \Omega$. In particular, $\Gamma \cap \partial \Omega\subset \Gamma_{II}$.
\end{fact}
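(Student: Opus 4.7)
The plan is to reduce both assertions to the standard Lagrange multipliers theorem, with the definition of $\Gamma$ supplying the minimization property and the preparatory results (Fact~\ref{Fact4} and Lemma~\ref{ggg}) supplying the rank/linear-independence hypotheses.

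For part~(a) I would unpack the defining condition of $\Gamma$: the requirement $x_0 \in \Gamma$ says precisely that $|\nabla f(x_0)|^2 \le |\nabla f(\zeta)|^2$ for every $\zeta\in\Omega$ with $f(\zeta)=f(x_0)$, i.e.\ $x_0$ globally minimizes the semialgebraic function $x\mapsto |\nabla f(x)|^2$ on the set $f^{-1}(f(x_0))\cap\Omega$. Since $x_0\in\Int\Omega$, a neighbourhood of $x_0$ in this constraint set coincides with a neighbourhood of $x_0$ in the full level set $f^{-1}(f(x_0))$; in particular $x_0$ is a local minimizer there. Because $0<|f(x_0)|<\varepsilon$ and $\varepsilon$ satisfies Fact~\ref{Fact4}, the value $f(x_0)$ is a regular value of $f$, so $\nabla f(x_0)\neq 0$ and $f^{-1}(f(x_0))$ is a smooth codimension-one manifold near $x_0$. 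The standard Lagrange multipliers theorem then produces $\lambda\in\rr$ with $\nabla|\nabla f(x_0)|^2-\lambda\nabla f(x_0)=0$, which is exactly the condition $x_0\in\Gamma_I$.

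For part~(b), the same minimization statement from the definition of $\Gamma$ shows that $x_0$ minimizes $|\nabla f|^2$ on the smaller set $f^{-1}(f(x_0))\cap\partial\Omega$, which is cut out by the two constraints $f(x)=f(x_0)$ and $|x|^2-r^2=0$ whose gradients at $x_0$ are $\nabla f(x_0)$ and $2x_0$. Here Lemma~\ref{ggg} does the crucial work: since $0<|f(x_0)|<\varepsilon$, it guarantees that these two gradients are linearly independent, and the assumption $n\ge 3$ ensures that the intersection is an $(n-2)$-dimensional smooth manifold on which Lagrange multipliers with two constraints apply. This yields $\lambda_1,\lambda_2\in\rr$ with $\nabla|\nabla f(x_0)|^2-\lambda_1\nabla f(x_0)-2\lambda_2 x_0=0$, i.e.\ $x_0\in\Gamma_{II}$.

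There is essentially no genuine obstacle: the two preparatory statements, Fact~\ref{Fact4} (ensuring $\nabla f(x_0)\neq 0$) and Lemma~\ref{ggg} (ensuring independence of $\nabla f(x_0)$ and $2x_0$ on $\partial\Omega$), were set up exactly to fulfill the regularity hypothesis of the Lagrange multipliers theorem in cases (a) and (b) respectively. The only thing to verify carefully is that the interpretation of \emph{lower critical point} matches our use — namely a local minimizer of a function constrained to a smooth manifold — which is immediate from the definition of $\Gamma$ and the smoothness of the level sets established above.
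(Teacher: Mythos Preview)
Your proposal is correct and matches the paper's approach exactly: the paper simply states that Fact~\ref{fact1IandIIl} follows ``by the Lagrange multipliers theorem, Fact~\ref{Fact4} and Lemma~\ref{ggg}'' without further detail, and your argument is precisely the unpacking of that sentence. The roles you assign to Fact~\ref{Fact4} (ensuring $\nabla f(x_0)\neq 0$) and Lemma~\ref{ggg} (ensuring linear independence of $\nabla f(x_0)$ and $2x_0$ on $\partial\Omega$) are exactly the intended ones.
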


Let $\M=\cc^n\times\cc\times\cc\times\cc^n\times\cc^n$, and let $\X\subset \M$ be the Zariski closure of the set
$$
\{(x,f(x),|\nabla f(x)|^2,\nabla f(x),\nabla |\nabla f(x)|^2)\in \M:x\in\Omega\}.
$$


We will determine polynomials describing a certain algebraic set $\Y\subset \M$ containing $\X$ as an irreducible component. Let  $G\in \cc[x,y,u]$, where $u$ is a variable, be the polynomial defined by
\begin{equation}\label{eqdefG}
G(x,y,u)=\sum_{i=1}^n\left(\frac{\partial P}{\partial x_i}(x,y)\right)^2-\left(\frac{\partial P}{\partial y}(x,y)\right)^2 \cdot u.
\end{equation}
It is easy to observe that 
$G(x,f(x),|\nabla f(x)|^2)=0$ for $x\in \Omega$. In particular, the polynomial $G$ vanishes on $\X$. 

Take systems of variables $t=(t_1,\ldots,t_n)$, $z=(z_1,\ldots,z_n)$, and let 
 $G_1,G_{2,i},G_{3,i} \in\cc[x,y,u,t,z]$ be  defined by
\begin{align}
G_1(x,y,u)&=u-t_1^2-\cdots- t_n^2, \,,\nonumber&\\
G_{2,i}(x,y,t)&=\frac{\partial P}{\partial x_i}(x,y)+\frac{\partial P}{\partial y}(x,y)t_i \,,& 1\le i\le n,\nonumber\\
G_{3,i}(x,y,u,t,z)&=\frac{\partial G}{\partial x_i}\left(x,y,u\right)+\frac{\partial G}{\partial y}\left(x,y,u\right)t_i \nonumber
&\\
&\qquad\qquad\qquad-\left(\frac{\partial P}{\partial y}\left(x,y\right)\right)^2\cdot z_i\, ,& 1\le i\le n.\nonumber
\end{align}

Let $\Y\subset \M$ be the closure of the  
constructible set
\begin{multline*}
\Y^0=\{w=(x,y,u,t,z)\in \M: P(x,y)=0,\;\frac{\partial P}{\partial y}(x,y)\ne 0,\; G_1(x,y,u)=0,\\
G_{2,i}(x,y,t)=0,\; G_{3,i}(w)=0,\;1\le i\le n\}.
\end{multline*}
Obviously $\X\subset \Y$, and locally  $\Y^0$ is the graph of a complex Nash mapping (i.e., a holomorphic mapping with semialgebraic graph). Moreover, we have

\begin{lem}\label{factY0smooth}
The set $\X$ is an irreducible component of $\,\Y$. Moreover, $\Y^0$ is a Zariski open and dense subset of $\Y$, and any point $w=(x_0,y_0,u_0,t_0,z_0)\in\Y^0$ has a neighbourhood $B\subset \M$ such that $\Y\cap B=\Y^0\cap B$ and
\begin{equation*}\label{eqformY0graph}
\Y^0\cap B=\left\{w=\left(x,g(x),h(x),\nabla g(x),\nabla h(x)\right)\in\M:x\in \Delta\right\}
\end{equation*}
for some holomorphic  function $g:\Delta\to \cc$, where $\Delta\subset \cc^n$ is a neighbourhood of $x_0$, and $h(x)=\left(\frac{\partial g}{\partial x_1}(x)\right)^2+\cdots+\left(\frac{\partial g}{\partial x_n}(x)\right)^2$.
 \end{lem}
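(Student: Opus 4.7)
The strategy is to apply the holomorphic implicit function theorem at an arbitrary point of $\Y^0$, then unwind the defining polynomials to recognise $\Y^0$ locally as the graph of a holomorphic map; the global statements about $\Y^0$ and $\X$ then follow from standard closure and dimension arguments.

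First I would fix $w=(x_0,y_0,u_0,t_0,z_0)\in \Y^0$. Since $P(x_0,y_0)=0$ and $\partial P/\partial y(x_0,y_0)\ne 0$, the holomorphic implicit function theorem produces a polydisc $\Delta\subset\cc^n$ around $x_0$ and a unique holomorphic $g\colon\Delta\to\cc$ with $g(x_0)=y_0$ and $P(x,g(x))\equiv 0$; differentiation gives $\partial g/\partial x_i=-(\partial P/\partial x_i)/(\partial P/\partial y)$ at $(x,g(x))$. The equation $G_{2,i}=0$ combined with $\partial P/\partial y\ne 0$ then forces $t_i=\partial g/\partial x_i$, and $G_1=0$ forces $u=\sum_j t_j^2=h(x)$. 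A direct computation shows $G(x,g(x),h(x))\equiv 0$; differentiating this identity with respect to $x_i$ and using $\partial G/\partial u=-(\partial P/\partial y)^2$ yields
$$\frac{\partial G}{\partial x_i}+\frac{\partial G}{\partial y}\frac{\partial g}{\partial x_i}=\left(\frac{\partial P}{\partial y}\right)^2\frac{\partial h}{\partial x_i},$$
which, compared with $G_{3,i}=0$, forces $z_i=\partial h/\partial x_i$. Shrinking $\Delta$ so that the corresponding neighbourhood $B$ of $w$ in $\M$ lies in $\{\partial P/\partial y\ne 0\}$, this shows $\Y^0\cap B$ is precisely the graph asserted in the lemma.

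For the remaining two claims I would observe that $\Y^0=\Y\cap \{\partial P/\partial y\ne 0\}$: one inclusion is immediate, while the other holds because the polynomial equations defining $\Y^0$ extend from $\Y^0$ to its Zariski closure $\Y$. This exhibits $\Y^0$ as a Zariski open subset of $\Y$, and since $\Y=\overline{\Y^0}$ by definition, it is automatically dense; choosing $B$ inside $\{\partial P/\partial y\ne 0\}$ then gives $\Y\cap B=\Y^0\cap B$. To see that $\X$ is an irreducible component of $\Y$, I would first verify directly that $\X\subset \Y$, as each generating point $(x,f(x),|\nabla f(x)|^2,\nabla f(x),\nabla|\nabla f(x)|^2)$ lies in $\Y^0$ by differentiation of $P(x,f(x))=0$ and $G(x,f(x),|\nabla f(x)|^2)=0$. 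The main obstacle is matching dimensions: passing from the real parametrisation $\phi(x)=(x,f(x),\ldots)$ on $\Omega$ to its holomorphic extension $\phi_\cc\colon\Omega_\cc\to\M$ on a complex neighbourhood of $\Omega$, analytic continuation places $\phi_\cc(\Omega_\cc)$ inside $\Y^0$ as a complex $n$-dimensional analytic set. Hence $\X$ attains the full local complex dimension of $\Y$ at each real point, and since the graph description from the first step shows that $\Y$ is locally smooth and irreducible of dimension $n$ there, $\X$ must coincide with a single irreducible component of $\Y$.
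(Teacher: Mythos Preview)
Your proof is correct and follows essentially the same route as the paper's, namely the holomorphic implicit function theorem applied to $P(x,y)=0$ at points where $\partial P/\partial y\ne 0$, followed by unwinding $G_1,G_{2,i},G_{3,i}$ to identify $\Y^0$ locally with the stated graph. The paper's argument is considerably terser: it simply notes that, since $P$ is irreducible, $\partial P/\partial y$ does not vanish identically on $\X$, so $\X\cap\{\partial P/\partial y\ne 0\}$ is open, dense, smooth and connected in $\X$ and sits inside $\Y^0$, whence $\X$ is an irreducible component of $\Y$; your explicit verification that $G_{2,i}=0$ forces $t_i=\partial g/\partial x_i$, that $G_1=0$ forces $u=h(x)$, and that differentiating $G(x,g(x),h(x))\equiv 0$ together with $G_{3,i}=0$ forces $z_i=\partial h/\partial x_i$, supplies exactly the details the paper suppresses. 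Your dimension-matching argument for $\X$ being a single component (via the holomorphic extension $\phi_{\cc}$) is a minor variant of the paper's connectedness argument, but both rest on the same local smoothness of $\Y^0$.
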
 

\begin{proof}
Since $P$ is an irreducible polynomial,  $\frac{\partial P}{\partial y}$ does not vanish on $\X$. So, by the Implicit Function Theorem, 
$
\{w=(x,y,u,t,z)\in \X:\frac{\partial P}{\partial y}(x,y)\ne0\}
$ 
is an open and dense subset of $\X$, and moreover it is a smooth and connected submanifold of~$\Y^0$. Consequently, $\X$ is an irreducible component of $\Y$. The ``moreover'' part of the assertion follows  immediately  from  the Implicit Function Theorem.
\end{proof}



Define $G_0,G_{4,i,j},G_{4,i,j,k}\in\cc[x,y,u,t,z]$  by
\begin{align}
G_0(x)&=x_1^2+\cdots+x_n^2-r^2,\nonumber\\
G_{4,i,j}(t,z)&=\det\left[\begin{matrix}
t_i&z_i\\
t_j&z_j \end{matrix}\right]  
 \,,& 1\le i<j\le n,\nonumber\\
G_{4,i,j,k}(x,t,z)&=\det\left[\begin{matrix}
t_i&z_i&x_i\\
t_j&z_j&x_j\\
t_k&z_k&x_k
\end{matrix}\right]\,,& 1\le i<j<k\le n,\nonumber
\end{align}
where the polynomials $G_{4,i,j,k}$ are defined if $n\geq 3$. Put
\begin{align}
\X_I&=\{w=(x,y,u,t,z)\in \X:G_{4,i,j}(t,z)= 0,\;1\le i<j\le n\},&\nonumber\\
\X_{II}&=\{w=(x,y,u,t,z)\in \X:G_0(x)=0,\;G_{4,i,j,k}(x,t,z)=0,\; 1\le i<j<k\le n\},&\nonumber\\
\W_I&=\{(w,\la)=(x,y,u,t,z,\la)\in \X\times \cc: z= \la t \},&\nonumber\\
\W_{II}&=\{(w,\la_1,\la_2)=(x,y,u,t,z,\la_1,\la_2)\in \X\times \cc\times \cc: G_0(x)=0,\;z=\la_1 t+\la_2 x\},&\nonumber\\
\Y_I&=\{w=(x,y,u,t,z)\in \Y:G_{4,i,j}(t,z)= 0,\;1\le i<j\le n\},&\nonumber\\
\Y_{II}&=\{w=(x,y,u,t,z)\in \Y:G_0(x)=0,\,G_{4,i,j,k}(x,t,z)=0,\; 1\le i<j<k\le n\},&\nonumber\\
\Z_{I}&=\{w=(x,y,u,t,z)\in \X:x\in \Gamma_I\},&\nonumber\\
\Z_{II}&=\{w=(x,y,u,t,z)\in \X:x\in\Gamma_{II}\},\nonumber&\\
\FF&=\{w=(x,y,u,t,z)\in \X:x\in \vf((0,1))\},&\nonumber
\end{align}
where the sets $\X_{II}$, $\W_{II}$ and $\Y_{II}$ are defined for $n\geq 3$.

Obviously $\X_I\subset \Y_I$ and $\X_{II}\subset\Y_{II}$. Moreover, any irreducible component of $\X_I$ is an irreducible component of $\Y_I$. The same holds for $\X_{II}$ and $\Y_{II}$.  Additionally, by  the Lagrange multipliers theorem and Facts \ref{Fact4}, \ref{fact1IandIIl} we immediately obtain

\begin{fact}\label{factZaropensubs}
{\rm (a)} Let 
$$
A_I=
\left\{w\in \X:\exists_{\la\in\cc}\;(w,\la)\in \W_I\right\}.
$$
If $\vf((0,1))\subset \Int \Omega$ then $\FF\subset \Z_I\subset A_I 
\subset \X_I\subset \Y_I$ and there exists an irreducible component $\X_{I,*}$ of $\overline{A_I}$ 
 which contains $\FF$ and is an  irreducible component of $\X_I$.

{\rm (b)} Let 
$$
A_{II}=
\left\{w\in \X:\exists_{\la_1,\la_2\in\cc}\;(w,\la_1,\la_2)\in \W_{II}\right\}.
$$
If $\vf((0,1))\subset \partial \Omega$ then $\FF\subset \Z_{II}\subset A_{II} 
\subset \X_{II}\subset \Y_{II}$ and there exists an irreducible component $\X_{II,*}$ of $\overline{A_{II}}$ 
 which contains $\FF$ and is an irreducible component of $\X_{II}$.
\end{fact}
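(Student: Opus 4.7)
My plan is to prove part (a) in two stages and then indicate the (essentially identical) modifications for part (b). Stage one verifies the chain $\FF\subset\Z_I\subset A_I\subset\X_I\subset\Y_I$; stage two produces the irreducible component $\X_{I,*}$ with its two maximality properties.

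The inclusions are mostly bookkeeping. For $\FF\subset\Z_I$ I would combine the hypothesis $\vf((0,1))\subset\Int\Omega$ with Fact \ref{Factproof1} (which allows us to assume $\vf([0,1))\subset\Gamma$) and Fact \ref{fact1IandIIl}(a), which places $\Gamma\cap\Int\Omega$ inside $\Gamma_I$. For $\Z_I\subset A_I$ I would note that at a graph point $w=(x,f(x),|\nabla f(x)|^2,\nabla f(x),\nabla|\nabla f(x)|^2)$ with $x\in\Gamma_I$ the defining Lagrange condition is exactly $z=\la t$ for some $\la\in\rr$, so $(w,\la)\in\W_I$. The inclusion $A_I\subset\X_I$ is the one-line observation that $z=\la t$ kills every $2\times 2$ minor $G_{4,i,j}(t,z)=t_iz_j-t_jz_i$, and $\X_I\subset\Y_I$ is immediate from $\X\subset\Y$.

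To produce $\X_{I,*}$ my strategy is to \emph{first} pick it as an irreducible component of $\X_I$ containing $\FF$ and \emph{then} show it is also an irreducible component of $\overline{A_I}$. Such a component of $\X_I$ exists because $\X_I$ is Zariski closed in $\X$ and therefore has finitely many irreducible components, while the complex Zariski closure of the connected real-analytic arc $\FF$ is irreducible and so lies in a unique component. The heart of the argument is then the inclusion $\X_{I,*}\subset\overline{A_I}$: on $\X_I$ the vanishing of all $2\times 2$ minors forces the matrix with rows $t$ and $z$ to have rank at most one, so at any point with $t\ne 0$ we have $z$ a scalar multiple of $t$ and the point lies in $A_I$. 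Since $t=\nabla f(\vf(\xi))\ne 0$ on $\vf((0,1))$ by \eqref{Lojgradinequality2}, the Zariski open subset $\X_{I,*}\cap\{t\ne 0\}$ is nonempty, hence dense in the irreducible $\X_{I,*}$, giving $\X_{I,*}\subset\overline{A_I}\subset\X_I$. The maximality of $\X_{I,*}$ in $\X_I$ then automatically upgrades to maximality inside the smaller closed set $\overline{A_I}$, so $\X_{I,*}$ is an irreducible component of $\overline{A_I}$.

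Part (b) runs in parallel: the single-multiplier relation $z=\la t$ is replaced by $z=\la_1 t+\la_2 x$ on $G_0(x)=0$, and the $2\times 2$ minor conditions by the $3\times 3$ minor conditions $G_{4,i,j,k}(x,t,z)=0$, which now encode that the matrix with rows $t,z,x$ has rank at most two. The one new ingredient needed for the density step is the linear independence of $t=\nabla f(x)$ and $x$ along $\FF\subset\partial\Omega$, supplied exactly by Lemma \ref{ggg}; with this in hand the rank-two condition forces $z\in\operatorname{span}(t,x)$ on a nonempty (and therefore dense) open subset of $\X_{II,*}$, and the same closure-and-maximality argument applies. The main obstacle in both cases is this density step --- everything else is unwinding definitions --- and the nonvanishing statements from \eqref{Lojgradinequality2} and Lemma \ref{ggg} are precisely what makes it work.
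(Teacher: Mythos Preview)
Your proof is correct and follows essentially the same approach as the paper. The only organizational difference is that the paper first picks $\X_{I,*}$ as an irreducible component of $\overline{A_I}$ containing $\FF$ and then argues (via the decomposition $\X_I=\{t=0\}\cup A_I$ and $\FF\not\subset\{t=0\}$) that it is also a component of $\X_I$, whereas you select the component inside $\X_I$ first and then push it into $\overline{A_I}$ using density of $\{t\ne 0\}$; the underlying rank argument and the appeals to Fact~\ref{Fact4}/\eqref{Lojgradinequality2} and Lemma~\ref{ggg} are identical.
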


\begin{proof}
From 
 Fact \ref{fact1IandIIl}(a) we have $\FF\subset \{(x,y,u,t,z)\in \X:x\in\Gamma_I\}\subset A_I$. 
Since all the polynomials $G_{4,i,j}$ vanish on $\X_I$, the vectors $t,\,z$ are linearly dependent provided $(x,y,u,t,z)\in\X_I$ for some $x,y,u$. So  $\X_I=\mathcal{X}_I\cup A_I$, 
 where
$$
\mathcal{X}_I=\{w=(x,y,u,t,z)\in \X_I: t=0\}.
$$
Obviously, the set $\mathcal{X}_I$ is contained in the hyperplane $H$ defined by $t=0$, and by Fact \ref{Fact4} we have $\FF\setminus H\ne \emptyset$, so $\overline{A_I}$ 
 has an irreducible component containing $\FF$ which is an irreducible component of $\X_I$. This gives  assertion (a).

Analogously, from Fact \ref{fact1IandIIl}(b)  we obtain $\FF\subset A_{II}$. 
 Moreover, the vectors $x,t,z$ are linearly dependent provided $(x,y,u,t,z)\in \X_{II}$ for some $y,u$, so $\X_{II}=\mathcal{X}_{II}\cup A_{II}$, 
  where
$$
\mathcal{X}_{II}=\{w=(x,y,u,t,z)\in \X_I: G_0(x)=0,\; G_{4,i,j}(x,t)=0,\;1\le i<j\le n\}.
$$
Obviously, $\mathcal{X}_{II}$ is contained in the set $W$ defined by $G_{4,i,j}(x,t)=0$, $1\le i<j\le n$. By Lemma \ref{ggg} we have $\FF\setminus W\ne \emptyset$, so as above, the set $A_{II}$ 
 has an irreducible component satisfying  (b).
\end{proof}


%
%

From Fact \ref{factZaropensubs} and Lemmas \ref{Fact3} and \ref{factY0smooth} and the definition of $\Y$ we have

\begin{fact}\label{factdegrees} 
$\delta(\X_{I,*})\le \delta(\Y_I)\le 2(2d-1)^{3n+1}$ and 
$\delta(\X_{II,*})\le \delta(\Y_{II})\le 2(2d-1)^{3n+1}$.
\end{fact}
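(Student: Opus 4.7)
The plan is to reduce the bounds to Lemma \ref{Fact3}, applied in the ambient space $\M \cong \cc^{3n+2}$. First, the inequalities $\delta(\X_{I,*}) \le \delta(\Y_I)$ and $\delta(\X_{II,*}) \le \delta(\Y_{II})$ are immediate from Fact \ref{factZaropensubs}: $\X_{I,*}$ is an irreducible component of $\X_I$, and (as observed just before Fact \ref{factZaropensubs}) every irreducible component of $\X_I$ is an irreducible component of $\Y_I$; the same holds in case II. So the real content is the bound $\delta(\Y_I), \delta(\Y_{II}) \le 2(2d-1)^{3n+1}$.

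My next step would be to audit the degrees of all defining polynomials. Starting from $\deg P = d$ one reads off $\deg G_1 = 2$, $\deg G_{2,i} \le d$, and $\deg G \le 2d-1$ (the dominant term being $(\partial P / \partial y)^2 u$), whence $\deg G_{3,i} \le 2d-1$; likewise $\deg G_0 = 2$, $\deg G_{4,i,j} = 2$, and $\deg G_{4,i,j,k} = 3$. Since $d \ge 2$ throughout this section (the case $d=1$ was handled separately), the maximum degree among all defining polynomials of $\Y_I$ and of $\Y_{II}$ is $2d-1$.

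The central step applies Lemma \ref{Fact3} with $N = 3n+2$. For a generic linear map $L = (L_1, \ldots, L_{3n+1})$ on the vector $f$ of defining polynomials of $\Y_I$ (resp.\ $\Y_{II}$), every irreducible component of $\Y_I$ (resp.\ $\Y_{II}$) of dimension at least $1$ is an irreducible component of the variety cut out by $L_1 \circ f = \cdots = L_{3n+1} \circ f = 0$. Each $L_i \circ f$ is a linear combination of polynomials of degree at most $2d-1$, so has degree $\le 2d-1$, and by Fact \ref{Fact1} the generic cut-out has total degree at most $(2d-1)^{3n+1}$, which bounds the sum of degrees of all positive-dimensional components of $\Y_I$ (resp.\ $\Y_{II}$).

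The hard part will be to cope with any zero-dimensional (isolated) irreducible components of $\Y_I$ or $\Y_{II}$, since Lemma \ref{Fact3} only controls components of dimension $\ge 1$. These can be handled by a standard Bezout count using $3n+2$ generic linear combinations of the defining polynomials, one of which is fixed as $G_1$ (in case I) or $G_0$ (in case II) of degree $2$, giving at most $2(2d-1)^{3n+1}$ isolated points; the factor of $2$ in the target bound is precisely what provides the slack to absorb both contributions. As a sanity check, $\X_{I,*}$ and $\X_{II,*}$ themselves have dimension $\ge 1$ because each contains the image curve $\FF$ of the analytic path $\vf$, so Lemma \ref{Fact3} applies directly and in fact gives the sharper bound $\deg \X_{I,*}, \deg \X_{II,*} \le (2d-1)^{3n+1}$.
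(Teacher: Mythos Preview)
Your overall plan---degree audit followed by Lemma~\ref{Fact3} in the ambient $\M\cong\cc^{3n+2}$, using Fact~\ref{factZaropensubs} for the first inequalities---is exactly the route the paper indicates (it merely cites Fact~\ref{factZaropensubs}, Lemmas~\ref{Fact3} and~\ref{factY0smooth}, and the definition of $\Y$). The degree bookkeeping is correct: every defining polynomial has degree at most $2d-1$.

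The one genuine gap is your treatment of possible zero\nobreakdash-dimensional components. Your two counts do not add up: $(2d-1)^{3n+1}$ for the positive\nobreakdash-dimensional part plus $2(2d-1)^{3n+1}$ for isolated points gives $3(2d-1)^{3n+1}$, which overshoots the target. The factor $2$ cannot ``absorb both contributions'' as you claim.

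The cleaner fix---and the reason the paper cites Lemma~\ref{factY0smooth}---is that there are no zero\nobreakdash-dimensional components to worry about. By Lemma~\ref{factY0smooth}, $\Y$ is pure $n$\nobreakdash-dimensional. The constraint set $\{G_{4,i,j}(t,z)=0:1\le i<j\le n\}$ is the irreducible determinantal variety of $2\times n$ matrices of rank $\le 1$, of codimension $n-1$ in $\M$; hence, by the affine intersection inequality applied componentwise, every irreducible component of $\Y_I=\Y\cap\{G_{4,i,j}=0\}$ has dimension $\ge n-(n-1)=1$. For $\Y_{II}$ (with $n\ge 3$) the rank $\le 2$ locus of the $3\times n$ matrix $(x,t,z)$ is irreducible of codimension $n-2$, so every component of $\Y\cap\{G_{4,i,j,k}=0\}$ has dimension $\ge 2$; intersecting with the hypersurface $G_0=0$ leaves every component of $\Y_{II}$ of dimension $\ge 1$. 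Lemma~\ref{Fact3} then applies to all components simultaneously and already gives $\delta(\Y_I),\,\delta(\Y_{II})\le (2d-1)^{3n+1}$, well within the stated bound (and in fact $\le 2(2d-1)^{3n}$ if one fixes $L_1=G_1$ via the ``moreover'' clause). Your sanity\nobreakdash-check observation that $\X_{I,*}$, $\X_{II,*}$ themselves have dimension $\ge 1$ because they contain $\FF$ is correct, but this alone would not cover the full $\delta(\Y_I)$ needed in Section~\ref{roz2III}.
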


The proofs of Theorems \ref{maintwr} and \ref{maintwrIII} consist  in showing that the projections of the sets $\X_{I,*}$ and $\X_{II,*}$ onto the space of $(y,u)\in \cc^2$ are proper algebraic subsets of $\cc^2$, since we have

\begin{lem}\label{lemfinishing}
If $Q\in\cc[y,u]$ is a nonzero polynomial of degree $D$ such that 
$$
Q(f(\vf(t)),|\nabla f(\vf(t))|^2)=0\quad\hbox{for }t\in[0,1),
$$
where  $\vf$ is the curve fulfilling \eqref{Lojgradinequality2}, then  

{\rm (a)} $\varrho_0(f)\leq 1-\frac{1}{D}$ if $D$ is  even,

{\rm (b)} $\varrho_0(f)\leq 1-\frac{1}{D+1}$ if $D$ is  odd. 
\end{lem}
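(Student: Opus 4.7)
The plan is to analyse the leading-order behaviour at $t=0$ of the real-analytic functions $y(t):=f(\varphi(t))$ and $u(t):=|\nabla f(\varphi(t))|^2$, and then exploit the algebraic relation $Q(y(t),u(t))\equiv 0$ via a Newton-polygon balance. Since $\varphi$ is analytic and $f$, $|\nabla f|^2$ are Nash, both $y$ and $u$ are real-analytic on $[0,1)$; they vanish at $0$ and neither is identically zero by \eqref{Lojgradinequality2} together with $f(\varphi(\xi))\ne 0$ for $\xi\in(0,1)$. Writing
\[
y(t)=y_p t^p+O(t^{p+1}),\qquad u(t)=u_q t^q+O(t^{q+1})
\]
with $p,q$ positive integers and $y_p,u_q\ne 0$, and comparing orders on both sides of \eqref{Lojgradinequality2} forces $q=2p\varrho_0$, so $\varrho_0(f)=q/(2p)$.

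For the algebraic step, expand $Q(y,u)=\sum_{(i,j)\in N}a_{ij}y^i u^j$ over the support $N$ of $Q$. The identity $Q(y(t),u(t))\equiv 0$ forces the coefficient of the lowest power of $t$ to vanish; that coefficient is $\sum_{(i,j)\in S}a_{ij}y_p^i u_q^j$, where $S=\{(i,j)\in N:ip+jq=M\}$ and $M=\min\{ip+jq:(i,j)\in N\}$. Because each individual term $a_{ij}y_p^i u_q^j$ is nonzero, this cancellation requires $|S|\ge 2$. Picking $(i_1,j_1),(i_2,j_2)\in S$ with $i_1<i_2$ (hence $j_1>j_2$), the equality $i_1p+j_1q=i_2p+j_2q$ rewrites as
\[
p(i_2-i_1)=q(j_1-j_2),
\]
so $\varrho_0=q/(2p)=(i_2-i_1)/(2(j_1-j_2))$; geometrically, $(i_1,j_1)$ and $(i_2,j_2)$ lie on a common edge of the Newton polygon of $Q$.

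The classical {\L}ojasiewicz gradient inequality supplies $\varrho_0<1$, i.e.\ $q<2p$. Put $a:=i_2-i_1$ and $b:=j_1-j_2$; these are positive integers with $a<2b$ and $\varrho_0=a/(2b)$, and $\deg Q=D$ gives $a\le i_2\le D$. The strict inequality together with integrality yields $b\ge\lceil(a+1)/2\rceil$, so $\varrho_0\le a/(a+1)$ when $a$ is odd and $\varrho_0\le a/(a+2)$ when $a$ is even. Maximising over $a\in\{1,\dots,D\}$: if $D$ is even, the largest odd value $a=D-1$ gives $\varrho_0\le(D-1)/D=1-1/D$, which dominates the even-$a$ bound $D/(D+2)$; if $D$ is odd, the value $a=D$ (which is odd) gives $\varrho_0\le D/(D+1)=1-1/(D+1)$. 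These are precisely assertions (a) and (b).

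The only subtle input is the a priori bound $\varrho_0<1$ from the classical real-analytic {\L}ojasiewicz theorem: without it the raw estimate $a\le D$ would only give $\varrho_0\le D/2$. Everything else is routine once the Newton-polygon picture is in place, so I expect no serious obstacle beyond the parity bookkeeping described above.
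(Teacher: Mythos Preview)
Your argument is correct and follows essentially the same route as the paper's proof: both extract the orders $p=\ord_0(f\circ\varphi)$ and $q=\ord_0|\nabla f\circ\varphi|^2$, identify $\varrho_0=q/(2p)$ from \eqref{Lojgradinequality2}, use the vanishing of $Q(y(t),u(t))$ to force two monomials of equal weighted order, and then optimise the resulting ratio of bounded integers under the constraint $\varrho_0<1$. The only cosmetic difference is that the paper re-derives $\varrho_0<1$ internally via the chain rule (from $\ord_0(\nabla f\circ\varphi)\le p-1$), whereas you invoke the classical {\L}ojasiewicz gradient inequality; both are valid in this context.
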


\begin{proof}
Let $\ord_0 (f\circ \varphi)=M$ and $\ord_0 |\nabla f\circ\varphi|^2=K$. Then $M,K>0$ and
\begin{equation*}\label{eqQfnabla1}
\ord_0 (f\circ \varphi)^K=\ord_0|\nabla f\circ \varphi|^{2M},
\end{equation*}
i.e., $|f\circ \varphi|^{\frac{K}{2M}}\sim |\nabla f\circ\varphi|$ near zero\footnote{That is, there are $C_1,C_2>0$ such that $C_1|f\circ \varphi|^{\frac{K}{2M}}\le |\nabla f\circ\varphi|\le C_2|f\circ \varphi|^{\frac{K}{2M}}$ near zero.}, so by \eqref{Lojgradinequality2} we have 
\begin{equation}\label{eqro0}
\varrho_0(f)=\frac{K}{2M}.
\end{equation}
Then, by definitions of $M$ and $K$ 
 there exists a pair of different monomials $\alpha u^N y^S$ and $\beta u^{N_1}y^{S_1}$ of the polynomial $Q$ such that 
$$
N+S\le D \quad\hbox{and}\quad N_1+S_1\le D,
$$
and
$$
NK+SM=N_1K+S_1 M.
$$
Hence $N-N_1\ne 0$, $S_1-S\ne 0$, and 
$$
\frac{K}{2M}=\frac{S_1-S}{2(N-N_1)}.
$$
Since $M>0$, we have $\ord_0\nabla f\circ \varphi\le M-1$, and so $K\le 2M-2$, and  $\frac{K}{2M}<1$. 
On the other hand, $|S_1-S|, |N-N_1|\in \{1,\ldots, D\}$, so by \eqref{eqro0}, $\varrho_0(f)$ is estimated from  above by the maximal possible rational number less than $1$ with  numerator from the set $ \{1,\ldots, D\}$ and  denominator from $ \{2,4,\ldots, 2D\}$. Consequently, we obtain the assertion.
\end{proof}

\subsection{Proof of Theorem \ref{maintwr} in case I when $\vf((0,1))\subset \Int \Omega$} 

By the assumption \eqref{geberalassumption}, 
in the definition of $\Y$ one can take  the  polynomials 
\begin{multline}\label{eqformG3i}
K_{3,i}(x,y,u,z)
=\frac{\partial G}{\partial x_i}\left(x,y,u\right)\frac{\partial P}{\partial y}(x,y)-\frac{\partial G}{\partial y}\left(x,y,u\right) \frac{\partial P}{\partial x_i}(x,y) \\
\qquad-\left(\frac{\partial P}{\partial y}\left(x,y\right)\right)^3\cdot z_i
\end{multline}
instead of $G_{3,i}$, $1\le i\le n$; also 
   in the definitions of $\X_I$ and $\Y_I$  one can take  
\begin{equation*}\label{eqnewG4ij}
K_{4,i,j}(x,y,u)=\frac{\partial P}{\partial x_i}(x,y)\frac{\partial G}{\partial x_j}(x,y,u)\\-\frac{\partial P}{\partial x_j}(x,y)\frac{\partial G}{\partial x_i}(x,y,u)
\end{equation*}
 instead of $G_{4,i,j}$, $1\le i<j\le n$.

From the above and Fact \ref{factZaropensubs} 
we obtain the following fact. 

\begin{fact}\label{factKonv} For $x\in \Gamma_I$ and $v=(x,y,u)=(x,f(x),|\nabla f(x)|^2)$ we have
\begin{align}\label{eqK1}
P(v)&=0,&\\
\label{eqK2}
G(v)&=0,&\\
K_{4,i,j}(v)&=0,&\quad 1\le i<j\le n.\label{eqK4ij}
\end{align}
\end{fact}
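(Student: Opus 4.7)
The plan is to verify the three identities directly from the chain rule and the Lagrange condition defining $\Gamma_I$. The identity $P(v)=0$ is immediate from \eqref{eqP}, since $v=(x,f(x),|\nabla f(x)|^2)$ and the polynomial $P$ depends only on the first two entries.

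For $G(v)=0$, I would differentiate \eqref{eqP} with respect to $x_i$ to obtain
\[
\frac{\partial P}{\partial x_i}(x,f(x))=-\frac{\partial P}{\partial y}(x,f(x))\cdot \frac{\partial f}{\partial x_i}(x),\qquad i=1,\ldots,n.
\]
Squaring each of these equations and summing over $i$, then comparing with \eqref{eqdefG}, yields $G(x,f(x),|\nabla f(x)|^2)=0$.

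The main identity $K_{4,i,j}(v)=0$ I would establish by showing that both vectors $\nabla_x P(x,f(x))$ and $\nabla_x G(x,f(x),|\nabla f(x)|^2)$ (partial derivatives in the $x$-variables only) are scalar multiples of $\nabla f(x)$; once this is known, every $2\times 2$ minor of the $2\times n$ matrix with these rows vanishes, and the $(i,j)$-minor is precisely $K_{4,i,j}(v)$. The proportionality of $\nabla_x P$ to $\nabla f$ is the displayed identity above. For $\nabla_x G$, I differentiate the identity $G(x,f(x),|\nabla f(x)|^2)=0$ in $x_i$ via the chain rule, using $\partial G/\partial u=-(\partial P/\partial y)^2$, to obtain
\[
\frac{\partial G}{\partial x_i}(v)=-\frac{\partial G}{\partial y}(v)\cdot \frac{\partial f}{\partial x_i}(x)+\left(\frac{\partial P}{\partial y}(x,f(x))\right)^{\!2}\frac{\partial |\nabla f|^2}{\partial x_i}(x).
\]
Since $x\in\Gamma_I$ supplies $\lambda\in\rr$ with $\nabla |\nabla f(x)|^2=\lambda\nabla f(x)$, the right-hand side factors as $\frac{\partial f}{\partial x_i}(x)\cdot B$, where $B=-\partial G/\partial y(v)+\lambda(\partial P/\partial y(x,f(x)))^2$ is independent of $i$, as required.

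There is essentially no obstacle; the whole argument is routine chain-rule bookkeeping. The one point worth flagging is that the polynomials $K_{4,i,j}$ are obtained from the original $G_{4,i,j}$ by clearing the common factor $\partial P/\partial y$, so the discussion sits entirely inside the Case I setting of Theorem \ref{maintwr}, where hypothesis \eqref{geberalassumption} is in force and the algebraic manipulations above are justified.
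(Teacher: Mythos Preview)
Your proof is correct and follows essentially the same approach as the paper, which merely asserts that the fact follows ``from the above and Fact~\ref{factZaropensubs}'' without writing out the chain-rule computation; you have simply made explicit the verification that $\nabla_x P(v)$ and $\nabla_x G(v)$ are both proportional to $\nabla f(x)$, which is precisely the content behind the paper's remark that the $K_{4,i,j}$ may replace the $G_{4,i,j}$ under \eqref{geberalassumption}.
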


Let $\V_{I,0}\subset \bf{M}$, where ${\bf M}= \cc^n\times\cc\times\cc$, be an algebraic set defined by 
the  system of equations \eqref{eqK1}--\eqref{eqK4ij}, and let
\begin{align}\nonumber
\Y^0_{I}&=\left\{(x,y,u,t,z)\in \Y_{I}:\frac{\partial P}{\partial y}(x,y)\ne 0\right\},\\
\nonumber
\V^0_{I}&=\left\{(x,y,u)\in \V_{I,0}:\frac{\partial P}{\partial y}(x,y)\ne 0\right\},\\
\nonumber
\V_I&=\overline{\V_{I}^0}.
\end{align}

We have the following fact (cf.  \cite[Fact 2.11]{KOSS}).

\begin{fact}\label{factWprojV}
The mapping 
\begin{equation*}\label{eqWprojV1}
\Y_I^0 \ni (x,y,u,t,z)\mapsto (x,y,u)\in\V^0_{I}
\end{equation*}
is a bijection.
\end{fact}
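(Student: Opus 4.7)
The plan is to show the projection in question has an explicit polynomial-valued inverse, so bijectivity reduces to algebraic identities among the defining equations of $\Y_I^0$ and $\V_I^0$.

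First, I would verify that the projection is well-defined, i.e.\ that the image of $\Y_I^0$ lies in $\V_I^0$. If $(x,y,u,t,z)\in \Y_I^0$, then $P(x,y)=0$ and $\partial P/\partial y(x,y)\ne 0$ by definition. The equations $G_{2,i}=0$ give $t_i = -\frac{\partial P/\partial x_i(x,y)}{\partial P/\partial y(x,y)}$, so squaring and summing yields $\bigl(\partial P/\partial y\bigr)^2\sum t_i^2 = \sum (\partial P/\partial x_i)^2$. Combined with $G_1=0$ (i.e.\ $u=\sum t_i^2$), this is precisely $G(x,y,u)=0$. For $K_{4,i,j}(x,y,u)=0$, substitute $\partial P/\partial x_i = -t_i\,\partial P/\partial y$ and, from $G_{3,i}=0$, $\partial G/\partial x_i = (\partial P/\partial y)^2 z_i - t_i\,\partial G/\partial y$ into the definition of $K_{4,i,j}$: a direct expansion gives
\[
K_{4,i,j}(x,y,u) \;=\; -\bigl(\partial P/\partial y\bigr)^3\bigl(t_i z_j - t_j z_i\bigr)\;=\; -\bigl(\partial P/\partial y\bigr)^3 G_{4,i,j}(t,z),
\]
and since $G_{4,i,j}(t,z)=0$ on $\Y_I^0$, we get $K_{4,i,j}=0$.

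Next, I would construct the inverse. Given $(x,y,u)\in \V_I^0$, set
\[
t_i := -\frac{\partial P/\partial x_i(x,y)}{\partial P/\partial y(x,y)}, \qquad
z_i := \frac{1}{\bigl(\partial P/\partial y(x,y)\bigr)^2}\!\left[\frac{\partial G}{\partial x_i}(x,y,u) + t_i\,\frac{\partial G}{\partial y}(x,y,u)\right].
\]
Then $G_{2,i}=0$ and $G_{3,i}=0$ hold by construction, $P(x,y)=0$ is given, and $\partial P/\partial y\ne 0$ is part of the definition of $\V_I^0$. The identity $G(x,y,u)=0$ (which holds on $\V_I^0$) combined with $(\partial P/\partial y)^2 \sum t_i^2 = \sum (\partial P/\partial x_i)^2$ forces $u=\sum t_i^2$, i.e.\ $G_1=0$. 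Finally, the same boxed identity above, run in reverse, shows $G_{4,i,j}(t,z)=0$ because $K_{4,i,j}(x,y,u)=0$ on $\V_I^0$ and $\partial P/\partial y\ne 0$. Thus $\bigl(x,y,u,t,z\bigr)\in \Y_I^0$.

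The two assignments are mutually inverse: the forward projection obviously forgets $(t,z)$, while the reconstruction formulas express $t$ and $z$ as rational functions of $(x,y,u)$ that are forced upon us by $G_{2,i}=0$ and $G_{3,i}=0$ whenever $\partial P/\partial y\ne 0$. In particular, on $\Y_I^0$ the components $t$ and $z$ are already determined by $(x,y)$ and $(x,y,u)$ respectively, so the inverse map returns the original point. The only subtle point in the argument is the algebraic identity $K_{4,i,j} = -(\partial P/\partial y)^3 G_{4,i,j}(t,z)$, which is what makes the two minor-type conditions equivalent modulo the nonvanishing of $\partial P/\partial y$; everything else is a direct consequence of the Implicit Function type parametrization already recorded in Lemma \ref{factY0smooth}.
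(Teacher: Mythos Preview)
Your proof is correct. Both your argument and the paper's rest on the same observation --- that on the locus $\partial P/\partial y\ne 0$ the equations $G_{2,i}=0$ and $G_{3,i}=0$ determine $t$ and $z$ uniquely as rational functions of $(x,y,u)$ --- but the presentations differ. The paper invokes the Implicit Function Theorem to write $y=g(x)$ locally and then identifies both $\Y_I^0$ and $\V_I^0$ as graphs over the same base $\Delta\cap V$, with $(t,z)=(\nabla g,\nabla h)$; bijectivity is then read off from the graph description. You instead write down the inverse explicitly and reduce everything to the algebraic identity $K_{4,i,j}=-(\partial P/\partial y)^3\,G_{4,i,j}(t,z)$ (valid once $G_{2,i}=G_{3,i}=0$), which simultaneously handles well-definedness and surjectivity without any local analytic input. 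Your route is slightly more self-contained and makes transparent why the paper is entitled to replace $G_{4,i,j}$ by $K_{4,i,j}$ in the definition of $\Y_I$; the paper's route is shorter because it recycles the local description already recorded in Lemma~\ref{factY0smooth}.
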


\begin{proof}
Taking any $(x,y,u,t,z)\in \Y^0_{I}$ (respectively $(x,y,u)\in\V^0_{I}$), by the  Implicit Function Theorem there are a neighbourhood $\Delta\subset \cc^n$ of $x$, a holomorphic function $g:\Delta\to\cc$ 
and neighbourhoods $U_1\subset \cc\times\cc\times\cc^n\times\cc^n$ and $U_2\subset \cc\times\cc$ of $(y,u,t,z)$ and $(y,u)$ respectively such that
\begin{align}
\nonumber
\Y_I^0\cap(\Delta\times U_1)&=\{(\zeta,g(\zeta),h(\zeta),\nabla g(\zeta),\nabla h(\zeta))\in\M:\zeta\in\Delta\cap V\},\\
\nonumber
\V_I^0\cap(\Delta\times U_2)&=\{(\zeta,g(\zeta),h(\zeta))\in {\bf M}: 
\zeta\in\Delta\cap V\},
\end{align}
where $h(\zeta)=\left(\frac{\partial g}{\partial x_1}(\zeta)\right)^2+\cdots+\left(\frac{\partial g}{\partial x_n}(\zeta)\right)^2$, and
$$
V=\{\zeta\in\Delta:K_{4,i,j}(\zeta,g(\zeta),h(\zeta))=0,\;1\le i<j\le n\}.
$$
 In particular, $g(x)=y$, $u=h(x)$, 
$t=\nabla g(x)$ and $z=\nabla h(x)$. Thus,  we obtain the assertion. 
\end{proof}

Let  $\E_I\subset  {\bf M}\times \cc$ 
 be the Zariski closure of the set 
\begin{equation*}
\E_{I,0}=\{(x,y,u,\la)\in  \Omega\times \rr\times\rr\times \rr:y=f(x),\; u=|\nabla f(x)|^2,\\
 \nabla|\nabla f(x)|^2=\la \nabla f(x)\}.
\end{equation*}

From Fact \ref{factZaropensubs}(a) we obtain

\begin{fact}\label{factEILagrmult}
There exists an irreducible component $\E_{I,*}$ of $\E_I$ which contains a Zariski open and dense subset $\mathcal{U}$ 
such that for any $(x,y,u,\la)\in \mathcal{U}$ there exist $t,z\in\cc^n$ such that  $(x,y,u,t,z)\in \X_{I,*}$ and in particular $z=\la t$. 
\end{fact}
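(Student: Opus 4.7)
The plan is to construct $\E_{I,*}$ as the image, under the projection $\beta \colon \W_I \to \M \times \cc$ given by $\beta(x,y,u,t,z,\la) = (x,y,u,\la)$, of a suitable irreducible component of $\W_I$. First one checks that $\beta(\W_I) \subset \E_I$: on real points of $\W_I$ arising from $x \in \Omega$ with $t = \nabla f(x)$, $z = \nabla |\nabla f(x)|^2$, $y = f(x)$, $u = |\nabla f(x)|^2$ and $z = \la t$, the image $(x,y,u,\la)$ lies in $\E_{I,0}$ by the very definition of the Lagrange multiplier, and Zariski density transfers the inclusion to $\E_I$. By Fact~\ref{factZaropensubs}(a) one has $\FF \subset \X_{I,*}$, and Fact~\ref{Fact4} guarantees $\nabla f \neq 0$ on $\vf((0,1))$, so $\la$ is unambiguously defined along the natural lift of $\FF$ into $\W_I$. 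Let $\W_{I,*}$ denote the irreducible component of $\W_I$ containing this lifted arc; its projection to $\X$ is an irreducible subset of $\overline{A_I}$ containing $\FF$, hence coincides with $\X_{I,*}$. Set $\E_{I,*} := \overline{\beta(\W_{I,*})}$.

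To see that $\E_{I,*}$ is actually an irreducible component of $\E_I$ and to exhibit $\mathcal{U}$, one inverts $\beta$ rationally on the open locus $\{\partial P/\partial y \neq 0\}$ using Lemma~\ref{factY0smooth}. On that locus the equations $G_{2,i} = 0$ and $G_{3,i} = 0$ defining $\Y$ can be solved for $t$ and $z$ as rational functions of $(x,y,u)$ by the implicit function theorem; adjoining the coordinate $\la$ yields a rational map $\phi$ from $\E_I$ back to $\W_I$. The Lagrange relation $\nabla u = \la \nabla y$, which holds on $\E_I$ by Zariski closure from $\E_{I,0}$, is precisely what forces the output of $\phi$ to satisfy $z = \la t$, so $\phi$ really lands in $\W_I$. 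Hence $\beta$ restricts to a birational equivalence between $\W_{I,*}$ and $\E_{I,*}$, which forces $\E_{I,*}$ to be an irreducible component of $\E_I$. Taking $\mathcal{U}$ to be the Zariski open subset of $\E_{I,*}$ on which $\phi$ is regular and $\phi(\mathcal{U}) \subset \W_{I,*}$, every point $(x,y,u,\la) \in \mathcal{U}$ lifts under $\phi$ to a point of $\W_{I,*}$ whose projection to $\X$ lies in $\X_{I,*}$ and whose coordinates satisfy $z = \la t$ by construction.

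The main obstacle will be showing that $\phi(\mathcal{U})$ lies in the prescribed component $\W_{I,*}$ rather than merely in $\W_I$. This reduces to an irreducibility argument: $\phi(\mathcal{U})$ is irreducible (as the image under a rational map of the irreducible $\mathcal{U}$), and by assumption~\eqref{geberalassumption} the entire curve $\vf$ lies in the locus $\{\partial P/\partial y \neq 0\}$, so its lift to $\W_I$ (which generates $\W_{I,*}$ and whose image under $\beta$ generates $\E_{I,*}$) lies inside $\phi(\mathcal{U})$, pinning down the correct component of $\W_I$. Density of $\mathcal{U}$ in $\E_{I,*}$ follows from the same observation, since the generating $\vf$-arc lies entirely in the open regular locus of $\phi$.
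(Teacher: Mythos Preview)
Your approach is essentially the same as the paper's: both realize $\E_{I,*}$ as the closure of the projection $\beta$ of an irreducible component of $\W_I$ containing the lift of the curve $\vf$, and both invoke Fact~\ref{factZaropensubs}(a) to link that component to $\X_{I,*}$. The paper's proof is just two sentences and leaves the verification that the projection is a full irreducible component (rather than a proper subvariety) implicit, whereas you supply this via the birational inverse $\phi$; one minor imprecision is your claim that $\beta(\W_I)\subset\E_I$ globally---this only holds for the components of $\W_I$ generated by real points, but since you immediately restrict to $\W_{I,*}$ (which contains the real arc over $\vf$) the argument goes through.
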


\begin{proof}
The set $\E_I$ is the projection of the union of some irreducible components of $\W_I$ onto $(x,y,u,\la)\in{\bf M}\times\cc$. 
 So by  Fact \ref{factZaropensubs}(a) we obtain the assertion.
\end{proof}

Let
$$
\pi:{\bf M}\times \cc 
\ni (x,y,u,\la)\mapsto (x,y,u)\in {\bf M},
$$
let $\E_{I,*}$ be an irreducible component of $\E_I$ as in Fact \ref{factEILagrmult} and let
$$
\EE_I:=\overline{\pi(\E_{I,*})}.
$$ 

\begin{lem}\label{factcalV1proj}
The set $\EE_I$ is an irreducible component of the algebraic set $\V_{I}$. Moreover, $\EE_I$ contains a Zariski open and dense subset $\mathcal{U}_I$   such that $\mathcal{U}_I\subset\V_{I}^0\cap \pi(\E_{I,*})$, and any point $(x_0,y_0,u_0)\in\mathcal{U}_I$ has a neighbourhood $B\subset {\bf M}$ 
 such that $\V_I\cap B=\mathcal{U}_I\cap B$ and
\begin{equation}\label{eqformVgraph}
\mathcal{U}_I\cap B=\left\{\left(x,g(x),\left(\frac{\partial g}{\partial x_1}(x)\right)^2+\cdots+\left(\frac{\partial g}{\partial x_n}(x)\right)^2\right):x\in \Delta\cap V\right\}
\end{equation}
for some analytic set $V\subset \Delta$ with $x_0\in V$ and a holomorphic function $g:\Delta\to \cc$, where $\Delta\subset \cc^n$ is a neighbourhood of $x_0$.
 \end{lem}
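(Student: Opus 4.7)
The plan is to argue in three stages: first, that $\EE_I$ is an irreducible subvariety of $\V_I$; second, that a Zariski open dense subset $\mathcal{U}_I\subset\EE_I$ carries the asserted graph parametrization \eqref{eqformVgraph}; third, that this local description agrees with $\V_I$ itself, forcing $\EE_I$ to be an irreducible component of $\V_I$.

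For the inclusion, $\E_{I,*}$ is irreducible, so $\EE_I=\overline{\pi(\E_{I,*})}$ is irreducible. Fact \ref{factEILagrmult} supplies a Zariski open dense subset $\mathcal{U}\subset\E_{I,*}$ such that every $(x,y,u,\la)\in\mathcal{U}$ admits a lift $(x,y,u,t,z)\in\X_{I,*}\subset\X_I$. Since (as discussed preceding Fact \ref{factKonv}) one may substitute $K_{4,i,j}$ for $G_{4,i,j}$ in the definition of $\X_I$, we obtain $K_{4,i,j}(x,y,u)=0$ for $1\le i<j\le n$; combined with $P(x,y)=0$ and $G(x,y,u)=0$ (valid on all of $\X$), this places $(x,y,u)$ in $\V_{I,0}$. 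Taking closures gives $\EE_I\subset\V_I$.

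Next, shrink $\mathcal{U}$ so that $\frac{\partial P}{\partial y}\ne 0$ on its image, and let $\mathcal{U}_I$ be $\pi(\mathcal{U})\cap\V_I^0$ further restricted to avoid the other irreducible components of $\V_I$; this yields a Zariski open dense subset of $\EE_I$. Pick $(x_0,y_0,u_0)\in\mathcal{U}_I$ together with lifts $(x_0,y_0,u_0,\la_0)\in\mathcal{U}$ and $(x_0,y_0,u_0,t_0,z_0)\in\X_{I,*}$. By Lemma \ref{factY0smooth}, $\X$ is parametrized in a neighbourhood $B'\subset\M$ as $\{(x,g(x),h(x),\nabla g(x),\nabla h(x)):x\in\Delta\}$ for a holomorphic $g$ with $P(x,g(x))\equiv 0$ and $h=|\nabla g|^2$. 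Intersecting with $\X_{I,*}$ pulls back to the germ at $x_0$ of the irreducible component of $\{K_{4,i,j}(x,g(x),h(x))=0\}_{i<j}$ through $x_0$, call it $V$; projecting to ${\bf M}$ then exhibits $\mathcal{U}_I\cap B$ in the form \eqref{eqformVgraph} for a suitable neighbourhood $B\subset{\bf M}$.

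Finally, the Implicit Function Theorem applied directly to the defining equations of $\V_I$ near $(x_0,y_0,u_0)$ reproduces the same description: $P=0$ with $\partial_y P\ne 0$ forces $y=g(x)$; substituting into $G(x,y,u)=0$ (whose $u$-coefficient $-(\partial_y P)^2$ is nonzero) yields $u=h(x)$; and the remaining equations $K_{4,i,j}=0$ reduce, near $x_0$, to $x\in V$ by the choice of $\mathcal{U}_I$. Hence $\V_I\cap B=\mathcal{U}_I\cap B$ after shrinking $B$, so $\EE_I$ coincides with $\V_I$ on a Zariski open dense subset of $\EE_I$; being irreducible, $\EE_I$ is an irreducible component of $\V_I$. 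The main technical obstacle is the bookkeeping needed to make sure the three parametrizations (of $\X_{I,*}$ via Lemma \ref{factY0smooth}, of $\E_{I,*}$ via the Lagrange relation $z=\la t$, and of $\V_I$ via the direct IFT) all refer to the \emph{same} holomorphic $g$ and localize on the \emph{same} irreducible component $V$; once that is set up, $\pi$ is a local biholomorphism onto this germ of $\V_I$.
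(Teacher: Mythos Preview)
Your argument is essentially correct and reaches the same conclusion, but the route differs from the paper's. The paper proves the first assertion in one stroke: since $\X$ is an irreducible component of $\Y$ (Lemma \ref{factY0smooth}), every irreducible component of $\X_I$ is already an irreducible component of $\Y_I$; the bijection of Fact \ref{factWprojV} then transports this component-preservation to the projections, so $\EE_I$---being the projection of a component of $\X_I$---is automatically a component of $\V_I$. The ``moreover'' part is then dismissed as analogous to Fact \ref{factWprojV}. You instead argue bottom-up: you establish the local graph description \eqref{eqformVgraph} first, show by a direct IFT computation that it coincides with $\V_I$ near generic points, and deduce the component claim from that local equality. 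Your approach is more explicit and self-contained (it does not lean on the global statement that components of $\X_I$ are components of $\Y_I$), at the cost of more bookkeeping; the paper's approach is shorter because the component-preservation has already been set up in the ambient space $\M$.

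One small technical point: as written, your $\mathcal{U}_I=\pi(\mathcal{U})\cap\V_I^0\setminus(\text{other components})$ is a priori only constructible, not Zariski open, since $\pi(\mathcal{U})$ is the image of a Zariski open set under a regular map. You should either pass to a Zariski open dense subset of it, or define $\mathcal{U}_I$ directly as the intersection of $\EE_I$ with the Zariski open conditions $\partial_yP\ne 0$ and ``not in any other component of $\V_I$'', and then check separately that this lies in $\pi(\E_{I,*})$ (which follows from Fact \ref{factWprojV}, since the fibre of $\pi$ over such a point is a single point of $\Y_I^0$, necessarily in $\X_{I,*}$).
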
 

\begin{proof}
By Facts \ref{factZaropensubs}, \ref{factWprojV} and \ref{factEILagrmult} we have $\pi(\E_{I,0})\subset \V_{I}$, so  $\EE_I\subset \V_{I}$
  and $\EE_I$ is an algebraic subset of $\V_{I}$. Since any irreducible component of $\X_I$ is an irreducible component of $\Y_I$, the same holds for $\pi(\E_I)$ and $\V_I$,  because these sets are projections onto the space ${\bf M}$ 
 of some collections of irreducible components of $\X_I$ and $\Y_I$, respectively. In particular, this holds for $\EE_I$ and $\V_I$. This gives the first part of the assertion.  We prove the ``moreover'' part analogously to Fact \ref{factWprojV}. 
\end{proof}

Let
$$
\pi_y:\EE_I\ni v=(x,y,u)\mapsto y\in\cc,
$$
$$
\pi_u:\EE_I\ni v= (x,y,u)\mapsto u\in \cc.
$$
We have the following lemma (cf. \cite[Lemmas 2.12, 2.14]{KOSS}).

\begin{lem}\label{lemfinitenesscriticalvalues}
For  generic $y_0\in \cc$, i.e., for any $y_0\in\cc$ off a finite set,  the function $\pi_u$ 
is constant on each connected component of $(\pi_y)^{-1}(y_0)$. 
\end{lem}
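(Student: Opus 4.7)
The plan is to control the image $C := \overline{(\pi_y, \pi_u)(\EE_I)} \subset \cc^2$ and show that $\dim C \le 1$; once this is in hand, the conclusion will follow by continuity. I would first invoke Lemma \ref{factcalV1proj} to obtain, on a Zariski open dense subset $\mathcal{U}_I \subset \EE_I$, a local description of each point as lying in a graph $\{(x, g(x), h(x)) : x \in \Delta \cap V\}$, with $g$ holomorphic on $\Delta$, $h = \sum_{i=1}^n (\partial g/\partial x_i)^2$, and $V$ cut out by $K_{4,i,j}(x, g(x), h(x)) = 0$ for $1 \le i < j \le n$. Differentiating $P(x, g(x)) = 0$ and $G(x, g(x), h(x)) = 0$ with respect to $x_i$, and using $\partial G/\partial u = -(\partial P/\partial y)^2 \ne 0$ on $\mathcal{U}_I$, I would verify that $\nabla_x P$ is proportional to $\nabla g$, and that the $n$-vectors $\nabla_x P$ and $\nabla_x G$ along the graph are parallel exactly when $\nabla g$ and $\nabla h$ are parallel. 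Thus $V$ coincides locally with the analytic set in $\Delta$ on which $\nabla g \parallel \nabla h$.

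Next, on $V$ the differential $d(g, h)_x \colon T_x V \to \cc^2$ has rank at most one: when $\nabla g(x)$ and $\nabla h(x)$ are collinear, both $dg$ and $dh$ restricted to the tangent space are scalar multiples of the same covector, so their joint image lies in a one-dimensional subspace of $\cc^2$. The constant-rank theorem then gives that $(g, h)(V \cap \Delta)$ is locally contained in a one-dimensional analytic subset of $\cc^2$; since $\mathcal{U}_I$ is Zariski dense in $\EE_I$ and $(\pi_y, \pi_u)$ is polynomial, this will force $\dim C \le 1$. Being the image-closure of the irreducible variety $\EE_I$, $C$ is irreducible, and I would split into cases: if $C$ is a point, the assertion is trivial; if $C$ is a vertical line $\{y = c\}$, then $(\pi_y)^{-1}(y_0) = \emptyset$ for $y_0 \ne c$, so we exclude the single value $y_0 = c$; otherwise $C$ is an irreducible plane curve along which $y$ is non-constant, and for every $y_0 \in \cc$ the set $\{u : (y_0, u) \in C\}$ is finite. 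In this generic case, continuity of $(\pi_y, \pi_u)$ sends any connected component of $(\pi_y)^{-1}(y_0)$ into a finite subset of $\{y_0\} \times \cc$, forcing $\pi_u$ to be constant on it.

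The main technical obstacle will be the geometric interpretation in the first step: translating the vanishing of $K_{4,i,j}$ along the graph of $(g, h)$ into the parallelism of $\nabla g$ and $\nabla h$. The computation is elementary but hinges on the non-vanishing of $\partial P/\partial y$ on $\mathcal{U}_I$ (and hence of $\partial G/\partial u$), both available via Lemma \ref{factcalV1proj}. Once that bridge is in place, the rank bound and the resulting plane-curve analysis should be routine.
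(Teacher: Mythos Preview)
Your proposal is correct and rests on the same geometric core as the paper's proof---the parallelism $\nabla g\parallel\nabla h$ on the set $V$---but the packaging differs in two respects.

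First, the paper does not re-derive this parallelism from the equations $K_{4,i,j}=0$. It uses instead the Lagrange-multiplier description built into $\mathcal{U}_I$: by Lemma~\ref{factcalV1proj} one has $\mathcal{U}_I\subset\pi(\E_{I,*})$, and Fact~\ref{factEILagrmult} guarantees that every such point carries a $\lambda$ with $z=\lambda t$, i.e.\ $\nabla h(x)=\lambda_x\nabla g(x)$. So the ``main technical obstacle'' you flag is bypassed entirely. Second, once the parallelism is in hand, the paper argues pointwise on fibres rather than via a global rank bound: for generic $y_0$ it takes a smooth curve $\gamma$ inside a connected piece of $(\pi_y)^{-1}(y_0)\cap\mathcal{U}_I$, so that $g\circ\gamma\equiv y_0$, and computes
\[
(h\circ\gamma)'(t)=\langle\nabla h(\gamma(t)),\overline{\gamma'(t)}\rangle=\lambda_{\gamma(t)}\langle\nabla g(\gamma(t)),\overline{\gamma'(t)}\rangle=0,
\]
then passes from $\mathcal{U}_I$ to $\EE_I$ by density. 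Your route---bounding $\dim\overline{(\pi_y,\pi_u)(\EE_I)}\le1$ and reading off fibre-constancy---is equally valid and in fact matches exactly how the lemma is \emph{used} afterwards (to produce the polynomial $Q(y,u)$). One small caveat: what you invoke is not the constant-rank theorem per se, since the rank of $d(g,h)|_{T_xV}$ need not be constant; the correct statement is that a holomorphic map whose differential has rank $\le 1$ everywhere has image of complex dimension $\le 1$, which follows by stratifying by rank.
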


\begin{proof}
If $\dim \EE_I=0$ or $\dim (\pi_y)^{-1}(y)\leq 0$ for  generic $y\in\cc$, then the assertion holds. Assume that $\dim \EE_I>0$ and $\dim(\pi_y)^{-1}(y)>0$ for generic $y\in\cc$. Then by Lemma \ref{factcalV1proj}, and under the notations of this lemma, we have 
  $\overline{\pi_y(\mathcal{U}_I)}=\overline{\pi_y(\EE_I)}=\cc$ and  $(\pi_y)^{-1}(y)\cap \mathcal{U}_I\ne \emptyset$ for  generic $y\in \cc$. 

Take any $y_0\in \cc$ such that $(\pi_y)^{-1}(y_0)\cap \mathcal{U}_I\ne \emptyset$. Take any $x_0\in\cc^n$ and $u_0\in\cc$ such that $(x_0,y_0,u_0)\in \mathcal{U}_I$. By Lemma \ref{factcalV1proj} there exist a neighbourhood $B\subset {\bf M}$ 
of $(x_0,y_0,u_0)$ and a holomorphic function $g:\Delta\to\cc$, where $\Delta\subset \cc^n$ is a neighbourhood of $x_0$, such that \eqref{eqformVgraph} holds for some analytic set $V\subset \Delta$.

Take any smooth curve $\gamma:[0,1]\to \Delta\cap V$ such that $g(\gamma(t))=y_0$ for $t\in [0,1]$. Let $h(x)=\left(\frac{\partial g}{\partial x_1}(x)\right)^2+\cdots+\left(\frac{\partial g}{\partial x_n}(x)\right)^2 $      for $x\in \Delta$  and take a function $u:[0,1]\to\cc$ defined by
$$
u(t)=h\circ \gamma(t).
$$
Observe that the function $u$ is constant. Indeed, by definition of $\mathcal{U}_I$ we see that for any $x\in  \Delta\cap V$ there exists $\la_x\in\cc$ such that 
$$
\nabla h(x)=\la_x \nabla g(x).
$$
So,
\begin{equation*}\label{equprime}
u'(t)=\la_{\gamma(t)}\langle \nabla g(\gamma(t)),\overline{\gamma'(t)}\rangle\quad\hbox{for }t\in[0,1],
\end{equation*}
where $\langle \cdot,\cdot\rangle$ denotes the standard scalar product in $\cc^n$. 
Since $g(\gamma(t))=y_0$ for $t\in[0,1]$, we have $\langle \nabla g(\gamma(t)),\overline{\gamma'(t)}\rangle=0$, and consequently $u'(t)=0$ for $t\in[0,1]$ and $u$ is constant. Summing up, the function $\pi_u$ is constant on each connected component of $(\pi_y)^{-1}(y_0)\cap \mathcal{U}_I$. 

Since $\mathcal{U}_I$ is a Zariski open and dense subset of $\EE_I$, any irreducible component of $\EE_I\setminus \mathcal{U}_I$ has dimension smaller than the dimension of 
 $\EE_I$, and for generic $y\in\cc$ any irreducible component $A$ of the fibre $\pi_y^{-1}(y)$ has a dense subset of the form $A\cap \mathcal{U}_I$ (see \cite[Chapter 3]{Mumford}). Then by the above we obtain the assertion.
\end{proof}

Since $\Gamma$ is an infinite set, it follows that $\dim\E_{I,0}\ge 1$, so by Fact \ref{factWprojV}, $\dim {\E_I}\ge 1$, and since $d=\deg P\ge 2$,  Lemma \ref{Fact3} and  the definition of $\V_I$ yield
 $\delta (\EE_I)\le d(3d-2)^n$, where  $\delta(\EE_I)$ is the total degree of  $\EE_I$. 
So, from Lemma \ref{lemfinitenesscriticalvalues}, the closure of the projection of $\EE_I$,  $W=\overline{\{(y,u)\in\cc^2:\exists_{x\in\cc^n}\;(x,y,u)\in \EE_{I}\}}$, is a proper algebraic subset of $\cc^2$ and by Fact \ref{Fact2}, $\delta(W)\leq \delta(\EE_I)$. Then 
there exists a nonzero polynomial $Q\in\cc[y,u]$ such that
\begin{equation*}\label{eqestdegQ}
\deg Q\le d(3d-2)^n\leq \mathcal{R}(n,d)-1
\end{equation*}
and $Q(y,u)=0$ for $(x,y,u)\in \EE_I$. In particular, $Q(f(\vf(t)),|\nabla f(\vf(t))|^2)=0$ for $t\in [0,1)$. 
Since $D=d(3d-2)^n$ may be  odd,  by Lemma \ref{lemfinishing}(b) we obtain the assertion of Theorem \ref{maintwr} in  case I.

\subsection{Proof of Theorem \ref{maintwr} in  case II when  $\varphi\big([ 0,1)\big)\subset\partial\Omega$}
For any $x\in\partial \Omega\setminus f^{-1}(0)$ sufficiently close to $f^{-1}(0)$ the tangent spaces to $\partial \Omega$ and $f^{-1}(f(x))$ are transversal, as  shown in Lemma \ref{ggg}.

We will 
 prove Theorem \ref{maintwr} in two dimensions and in the multidimensional case  separately.

\noindent{\it Proof of Theorem \ref{maintwr} in  case II for $n=2$.}  Take a polynomial  $G\in \cc[x,y,u]$, where $x=(x_1,x_2)$ and  $y$, $u$ are single variables,  defined by \eqref{eqdefG}, i.e., $
G(x,y,u)=\sum_{i=1}^2\left(\frac{\partial P}{\partial x_i}(x,y)\right)^2-\left(\frac{\partial P}{\partial y}(x,y)\right)^2 \cdot u$.
Let 
\begin{align}\nonumber
\V_{II,0}&=\{(x,y,u)\in \cc^2\times \cc\times \cc:P(x,y)=0,\;G_0(x)=0,\;G(x,y,u)=0\},\\
\nonumber 
\V_{II}^0&=\left\{(x,y,u)\in \V_{II,0}:\frac{\partial P}{\partial y}(x,y)\ne 0\right\},\\
\nonumber
\V_{II}&=\overline{\V^0_{II}}.
\end{align}
Then for any $x\in \Gamma\cap\partial \Omega$ we have $(x,f(x),|\nabla f(x)|^2)\in \V_{II}$. Consequently, 
$$
(\varphi(t),f(\varphi(t)),|\nabla f(\varphi(t))|^2)\in \V_{II}\quad\hbox{for }t\in [0,1).
$$
In particular, $\dim \V_{II}\ge 1$ and by Fact \ref{Fact1} we have $\delta (\V_{II})\le 2d(2d-1)$.

Since $P$ is an irreducible polynomial of positive degree with respect to $y$, for any $y\in\cc\setminus\{0\}$ sufficiently close to $0$ the set $\{x\in\cc^2:P(x,y)=0,\,G_0(x)=0\}$ is finite, so the set 
$
\{(x,u)\in \cc^2\times \cc:(x,y,u)\in \V_{II}\}
$ 
is also finite. Then the projection 
$$
W=\{(y,u)\in\cc^2:\exists_{x\in\cc^2}(x,y,u)\in \V_{II}\}
$$
is contained in a proper algebraic subset of $\cc^2$. By Fact \ref{Fact2},
$$
\delta(\overline{W})\le 2d(2d-1)\le \mathcal{R}(n,d).
$$
Then there exists a nonzero polynomial $Q\in\cc[y,u]$ of degree $\deg Q\le \delta(\overline{W})\le \mathcal{R}(n,d)$ which vanishes on $W$. Since $2d(2d-1)$ is  even, by  Lemma \ref{lemfinishing}(a) we obtain the assertion of Theorem \ref{maintwr} in  case II for $n=2$.
\hfill$\square$

Let us consider the case $n\ge 3$. 
Let $\varepsilon>0$ be as in Lemma \ref{ggg}. 

By the assumption \eqref{geberalassumption}, 
 in the definition of the set $\Y$  one can take the polynomials  $K_{3,i}$ of the form \eqref{eqformG3i} instead of $G_{3,i}$; also, in the definitions of $\X_{II}$ and $\Y_{II}$,   
one can take the polynomials
\begin{equation*}
K_{4,i,j,k}(x,y,u)=\left|\begin{matrix}
\frac{\partial P}{\partial x_i}(x,y)&\frac{\partial G}{\partial x_i}\left(x,y,u\right)& x_i\\
\frac{\partial P}{\partial x_j}(x,y)&\frac{\partial G}{\partial x_j}\left(x,y,u\right)& x_j\\
\frac{\partial P}{\partial x_k}(x,y)&\frac{\partial G}{\partial x_k}\left(x,y,u\right) &x_k
\end{matrix}\right|,
\end{equation*}
instead of $G_{4,i,j,k}$ for $1\le i<j<k\le n $, where $G$ is defined in \eqref{eqdefG}. 
Then 
\begin{align}
\X_{II}&=\{w=(x,y,u,t,z)\in \X:G_0(x)=0,\;K_{4,i,j,k}(x,y,u)=0,\; 1\le i<j<k\le n\},\nonumber\\
\Y_{II}&=\{w=(x,y,u,t,z)\in \Y:G_0(x)=0,\,K_{4,i,j,k}(x,y,u)=0,\; 1\le i<j<k\le n\}.\nonumber
\end{align}

Let $\V_{II,0}\subset {\bf M}$, where ${\bf M}=\cc^n\times\cc\times\cc$, 
 be  the algebraic set defined by 
\begin{multline*}
\V_{II,0}=\{(x,y,u)\in {\bf M} : 
P(x,y)=0,\;G_0(x)=0,\;G(x,y,u)=0,\\
K_{4,i,j,k}(x,y,u)=0,\;1\le i<j<k\le  n \}
\end{multline*}
and let 
\begin{align}
\nonumber 
\Y^0_{II}&=\left\{(x,y,u,t,z)\in \Y_{II}:\frac{\partial P}{\partial y}(x,y)\ne 0\right\},\\
\nonumber
\V^0_{II}&=\left\{(x,y,u)\in \mathbb{V}_{II,0}:\frac{\partial P}{\partial y}(x,y)\ne 0\right\},\\
\nonumber
\V_{II}&=\overline{\V_{II}^0}.
\end{align}

By an analogous argument to the proof of Fact \ref{factWprojV} we obtain

\begin{fact}\label{factWprojVII}
The mapping 
\begin{equation*}\label{eqWprojV1}
\Y_{II}^0 \ni (x,y,u,t,z)\mapsto (x,y,u)\in\V^0_{II}
\end{equation*}
is a bijection.
\end{fact}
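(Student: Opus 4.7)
The proof proceeds in parallel with Fact \ref{factWprojV}, since the definitions of $\Y_{II}$ and $\V_{II}$ differ from those of $\Y_I$ and $\V_I$ only by the additional constraints $G_0(x)=0$ and $K_{4,i,j,k}(x,y,u)=0$, both of which depend solely on the projected variables $(x,y,u)$ and not on $(t,z)$. Thus the bijectivity of the projection will follow once one checks that $t$ and $z$ are uniquely and consistently determined by $(x,y,u)$ on the stratum $\partial P/\partial y \ne 0$.

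The plan is to proceed in three steps. First, I would verify well-definedness: if $(x,y,u,t,z)\in\Y_{II}^0$ then the conditions $P(x,y)=0$, $G_0(x)=0$, $K_{4,i,j,k}(x,y,u)=0$, and $\partial P/\partial y(x,y)\ne 0$ transfer verbatim to $(x,y,u)$, while the remaining condition $G(x,y,u)=0$ defining $\V_{II}^0$ is obtained by combining $G_{2,i}=0$ (which forces $t_i=-\partial P/\partial x_i \cdot (\partial P/\partial y)^{-1}$) with $G_1=u-\sum t_i^2=0$, substituting to recover $G(x,y,u)=0$ identically.

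Second, injectivity is immediate from the same relations: given $(x,y,u)$ in the image, the equations $G_{2,i}=0$ uniquely determine $t_i$ (since $\partial P/\partial y\ne 0$), and then $K_{3,i}=0$ (equivalent to $G_{3,i}=0$ under the same nonvanishing hypothesis) uniquely determines $z_i$. Third, for surjectivity, given $(x,y,u)\in\V_{II}^0$ I would define $t_i$ and $z_i$ by the same explicit formulas and then verify that $(x,y,u,t,z)$ satisfies every defining equation of $\Y_{II}^0$. All the original equations of $\Y^0$ hold by construction, and the condition $G_0(x)=0$ is inherited from $\V_{II}^0$.

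The only step requiring a short calculation is checking that $G_{4,i,j,k}(x,t,z)=0$ holds for the $(t,z)$ produced from the given $(x,y,u)$, using that $K_{4,i,j,k}(x,y,u)=0$. This is a bookkeeping identity between two $3\times 3$ determinants: performing column operations and pulling out the scalar factors $(\partial P/\partial y)^{-1}$ and $(\partial P/\partial y)^{-2}$ from the columns of $G_{4,i,j,k}$, one recovers the determinant $K_{4,i,j,k}$ up to a nonzero multiple; since $K_{4,i,j,k}$ was engineered precisely for this substitution, the implication is immediate. This will be the only genuinely computational step, and is the main (mild) obstacle to a routine transcription of the proof of Fact \ref{factWprojV}.
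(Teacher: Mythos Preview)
Your proposal is correct and follows essentially the same approach as the paper: both arguments rest on the observation that, on the stratum where $\partial P/\partial y\ne 0$, the coordinates $t$ and $z$ are uniquely (and polynomially) determined by $(x,y,u)$. The paper packages this via the Implicit Function Theorem, describing both $\Y_{II}^0$ and $\V_{II}^0$ locally as graphs over the same analytic base set $V\subset\Delta$; you do the same thing by writing down the explicit formulas for $t_i$ and $z_i$ and checking well-definedness, injectivity, and surjectivity directly.

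One small remark: your last paragraph is unnecessary in the paper's setup. At the point where Fact~\ref{factWprojVII} is stated, the paper has already replaced the conditions $G_{4,i,j,k}(x,t,z)=0$ in the definition of $\Y_{II}$ by the conditions $K_{4,i,j,k}(x,y,u)=0$, which depend only on $(x,y,u)$ and are therefore inherited verbatim from $\V_{II}^0$. So there is no separate determinant identity to verify for surjectivity. The column-operation computation you outline is precisely the justification for that earlier replacement of $G_{4,i,j,k}$ by $K_{4,i,j,k}$; it belongs there rather than here.
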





Let  $\E_{II}\subset {\bf M}\times\cc^2$ 
 be the Zariski closure of the set 
\begin{multline*}
\E_{II,0}=\{(x,y,u,(\la_1,\la_2))\in \partial \Omega\times \rr\times\rr\times \rr^2:y=f(x),\; u=|\nabla f(x)|^2,\\
 \nabla|\nabla f(x)|^2=\la_1 \nabla f(x)+\la_2 x\}.
\end{multline*}
By a similar argument to the proof of Fact \ref{factEILagrmult}, from Fact \ref{factZaropensubs}(b) we obtain

\begin{fact}\label{factEILagrmultII}
There exists an irreducible component $\E_{II,*}$ of $\E_{II}$ which contains a Zariski open, dense subset $\mathcal{U}$ 
such that for any $(x,y,u,\la_1,\la_2)\in \mathcal{U}$ there exist $t,z\in\cc^n$ such that  $(x,y,u,t,z)\in \X_{II,*}$ and in particular $z=\la_1 t+\la_2 x$. 
\end{fact}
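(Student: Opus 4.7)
The plan is to mimic the proof of Fact~\ref{factEILagrmult} almost verbatim, replacing case I by case II throughout, so the argument amounts to identifying the correct irreducible component of $\W_{II}$ whose image under the forgetful projection produces $\E_{II,*}$. By construction, $\E_{II}$ is the Zariski closure of the image of $\W_{II}$ under the projection
$$
\pi_{II}:\M\times\cc^n\times\cc^n\times\cc^2\ni(x,y,u,t,z,\la_1,\la_2)\mapsto(x,y,u,\la_1,\la_2)\in\M\times\cc^2,
$$
while the ``forget $(\la_1,\la_2)$'' projection sends $\W_{II}$ onto $A_{II}$. Hence every irreducible component of $\E_{II}$ is the closure of the $\pi_{II}$-image of some irreducible component of $\W_{II}$ which in turn sits over an irreducible component of $\overline{A_{II}}$.

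First, I would invoke Fact~\ref{factZaropensubs}(b) to pick out the irreducible component $\X_{II,*}$ of $\overline{A_{II}}$ that contains $\FF$. The preimage of $\X_{II,*}$ in $\W_{II}$ is constructible, and over the Zariski open subset of $\X_{II,*}$ on which $x$ and $t$ are linearly independent, the fibre of the forgetful projection is a single point (the unique pair $(\la_1,\la_2)$ satisfying $z=\la_1 t+\la_2 x$). By Lemma~\ref{ggg}, this open subset is nonempty since it meets $\FF$, so the preimage of $\X_{II,*}$ has a unique irreducible component $\W_{II,*}$ dominating $\X_{II,*}$, and this $\W_{II,*}$ is an irreducible component of $\W_{II}$. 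I would then define
$$
\E_{II,*}:=\overline{\pi_{II}(\W_{II,*})},
$$
which is an irreducible component of $\E_{II}$ by the opening observation.

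Finally, to produce the Zariski open and dense subset $\mathcal{U}\subset\E_{II,*}$, I take the image in $\E_{II,*}$ of the locus in $\W_{II,*}$ where $x$ and $t$ are linearly independent. On this locus the map $(x,y,u,t,z)\mapsto(x,y,u,\la_1,\la_2)$ is a bijection with algebraic inverse given by solving the linear system $z=\la_1 t+\la_2 x$, so each point $(x,y,u,\la_1,\la_2)\in\mathcal{U}$ admits uniquely determined $(t,z)\in\cc^n\times\cc^n$ with $(x,y,u,t,z)\in\X_{II,*}$ and $z=\la_1 t+\la_2 x$, as required.

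The only nontrivial point is confirming that $\W_{II,*}$ is genuinely an irreducible component of $\W_{II}$ (not merely of the preimage of $\X_{II,*}$) and that projecting it yields a full irreducible component of $\E_{II}$ rather than a proper subset; this is where the generic linear independence of $x$ and $t$ supplied by Lemma~\ref{ggg}, and the consequent birationality of the forgetful projection restricted to $\W_{II,*}$, does the essential work by matching dimensions and ensuring no component of $\W_{II}$ of smaller dimension accidentally dominates $\X_{II,*}$.
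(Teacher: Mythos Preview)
Your argument is correct and follows exactly the route the paper indicates: the paper's own proof is simply ``By a similar argument to the proof of Fact~\ref{factEILagrmult}, from Fact~\ref{factZaropensubs}(b),'' and you have faithfully unpacked that analogy, tracking the component $\X_{II,*}$ through $\W_{II}$ to produce $\E_{II,*}$ and using Lemma~\ref{ggg} for the generic linear independence of $x$ and $t$. One small notational slip: the domain of your projection $\pi_{II}$ should be $\M\times\cc^2$ (since $\W_{II}\subset\X\times\cc^2\subset\M\times\cc^2$) and its codomain ${\bf M}\times\cc^2$, not $\M\times\cc^n\times\cc^n\times\cc^2\to\M\times\cc^2$ as written.
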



Let
$$
\pi':{\bf M}
\times \cc^2\ni (x,y,u,(\la_1,\la_2))\mapsto (x,y,u)\in {\bf M},
$$
and let
$$
\EE_{II}=\overline{\pi'(\E_{II,*})}.
$$

By an analogous argument to the proof of Lemma \ref{factcalV1proj} we obtain 

\begin{lem}\label{factcalV1projII}
The set $\EE_{II}$ 
 is an 
  irreducible component of the algebraic set $\V_{II}$. Moreover, $\EE_{II}$ 
  contains a Zariski open and dense subset $\mathcal{U}_{II}$  
   such that $\mathcal{U}_{II}\subset\V^0_{II}\cap \pi'(\E_{II,*})$ and any point $(x_0,y_0,u_0)\in\mathcal{U}_{II}$ has a neighbourhood $B\subset {\bf M}$ 
    such that $\V_{II}\cap B=\mathcal{U}_{II}\cap B$ and
\begin{equation}\label{eqformVgraphII}
\mathcal{U}_{II}\cap B=\left\{\left(x,g(x),\left(\frac{\partial g}{\partial x_1}(x)\right)^2+\cdots+\left(\frac{\partial g}{\partial x_n}(x)\right)^2\right):x\in \Delta\cap V\right\}
\end{equation}
for some analytic set $V\subset \Delta$, where $x_0\in V$ and $G_0$ vanishes on $V$, and a holomorphic function $g:\Delta\to \cc$, where $\Delta\subset \cc^n$ is a neighbourhood of $x_0$.
 \end{lem}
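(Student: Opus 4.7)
The plan is to mirror the proof of Lemma \ref{factcalV1proj} step by step, with the modifications that (i) the projection $\pi'$ discards two Lagrange parameters rather than one, (ii) the boundary equation $G_0(x)=0$ is now part of the defining conditions, and (iii) the minors $K_{4,i,j,k}$ of three-column matrices replace the $2\times 2$ minors $K_{4,i,j}$ used in case I.

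First I would check the inclusion $\EE_{II}\subset \V_{II}$. By construction, any point $(x,y,u,\la_1,\la_2)\in \E_{II,0}$ satisfies $P(x,y)=0$, $G_0(x)=0$, $G(x,y,u)=0$, and $\nabla|\nabla f(x)|^2=\la_1\nabla f(x)+\la_2 x$; the last equation forces the $3\times 3$ determinants $K_{4,i,j,k}(x,y,u)$ to vanish, since their columns are $\nabla P(\cdot,y)$, the corresponding components of $\nabla G-\tfrac{\partial G}{\partial y}\cdot$(something absorbed into the Lagrange relation), and $x$. Hence $\pi'(\E_{II,0})\subset \V_{II,0}$ and, taking closure, $\EE_{II}\subset \V_{II}$.

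Next I would argue that $\EE_{II}$ is actually an irreducible component of $\V_{II}$. By Fact \ref{factWprojVII}, the bijection $\Y_{II}^0\to \V_{II}^0$ identifies irreducible components of $\Y_{II}$ with those of $\V_{II}$, and by Fact \ref{factZaropensubs}(b) the chosen component $\X_{II,*}$ of $\overline{A_{II}}$ is an irreducible component of $\X_{II}$, hence of $\Y_{II}$. The set $\E_{II,*}$ from Fact \ref{factEILagrmultII} projects onto a collection of irreducible components of $\pi'(\E_{II})$ corresponding to $\X_{II,*}$; therefore $\EE_{II}=\overline{\pi'(\E_{II,*})}$ is an irreducible component of $\V_{II}$.

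For the "moreover" part, pick any generic point $(x_0,y_0,u_0)\in \EE_{II}$ with $\tfrac{\partial P}{\partial y}(x_0,y_0)\neq 0$ and lying in the image $\pi'(\mathcal{U})$ from Fact \ref{factEILagrmultII}; such points form a Zariski open and dense subset $\mathcal{U}_{II}\subset \EE_{II}\cap \V_{II}^0$. The Implicit Function Theorem applied to $P(x,y)=0$ produces a holomorphic function $g:\Delta\to\cc$ with $g(x_0)=y_0$ on a neighborhood $\Delta\subset \cc^n$ of $x_0$; define $h(x)=\sum_i(\partial g/\partial x_i)^2$, so $h(x_0)=u_0$. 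Shrink $\Delta$ and choose a neighborhood $B\subset\bf M$ of $(x_0,y_0,u_0)$ so that $\V_{II}\cap B$ is exactly the set of $(x,g(x),h(x))$ with $x$ in the analytic subset
\begin{equation*}
V=\{x\in\Delta:G_0(x)=0,\;K_{4,i,j,k}(x,g(x),h(x))=0,\;1\le i<j<k\le n\}.
\end{equation*}
This produces the required graph description \eqref{eqformVgraphII}, and $x_0\in V$ with $G_0|_V\equiv 0$ by construction.

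The only delicate point, and what I would treat most carefully, is verifying that $\EE_{II}$ really is a \emph{component} of $\V_{II}$ rather than a proper subset of one: the matching through Facts \ref{factZaropensubs}(b) and \ref{factWprojVII} must be transported correctly through the extra projection $\pi'$ that kills $(\la_1,\la_2)$, exactly as in Lemma \ref{factcalV1proj} for the single multiplier $\la$. Once this identification of components is set up, the local graph description is routine implicit function theorem.
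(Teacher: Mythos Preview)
Your proposal is correct and follows essentially the same route as the paper: the paper itself proves Lemma \ref{factcalV1projII} only by the sentence ``By an analogous argument to the proof of Lemma \ref{factcalV1proj} we obtain\ldots'', and you have correctly unpacked that analogy, invoking Facts \ref{factZaropensubs}(b), \ref{factWprojVII}, and \ref{factEILagrmultII} in place of their case~I counterparts and handling the extra boundary equation $G_0=0$ and the $3\times 3$ minors $K_{4,i,j,k}$. Your identification of the one genuinely delicate point (transporting the component correspondence through the projection $\pi'$ that kills both multipliers) is exactly where the paper's case~I proof does the work as well.
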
 

Let
$$
\pi_y:\EE_{II}\ni v=(x,y,u)\mapsto y\in\cc,
\quad\
\pi_u:\EE_{II}\ni v= (x,y,u)\mapsto u\in \cc.
$$

We have the following lemma (cf. Lemma \ref{lemfinitenesscriticalvalues} and \cite[Lemmas 2.12, 2.14]{KOSS}).

\begin{lem}\label{lemfinitenesscriticalvaluesII}
For generic $y_0\in \cc$ the function $\pi_u$ 
is constant on each connected component of $(\pi_y)^{-1}(y_0)$. 
\end{lem}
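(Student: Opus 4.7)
The proof follows the same pattern as that of Lemma \ref{lemfinitenesscriticalvalues}, with a single modification to accommodate the second Lagrange multiplier $\la_2$ appearing in Fact \ref{factEILagrmultII}. After disposing of the trivial dimension cases (where either $\dim \EE_{II}=0$ or the generic fibres of $\pi_y$ are zero-dimensional), I would fix a generic $y_0\in\cc$ such that $(\pi_y)^{-1}(y_0)\cap \mathcal{U}_{II}\ne \emptyset$, pick $(x_0,y_0,u_0)$ in this intersection, and apply Lemma \ref{factcalV1projII} to obtain a neighbourhood $B\subset \M$ and a holomorphic function $g:\Delta\to\cc$ such that, on an analytic set $V\subset\Delta$ on which crucially $G_0$ vanishes, the intersection $\V_{II}\cap B=\mathcal{U}_{II}\cap B$ is the graph \eqref{eqformVgraphII}.

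Next I would take any smooth curve $\gamma:[0,1]\to\Delta\cap V$ with $g\circ\gamma\equiv y_0$; such curves connect any two points in the same connected component of $(\pi_y)^{-1}(y_0)\cap \mathcal{U}_{II}\cap B$. Differentiating the two scalar constraints $g(\gamma(t))=y_0$ and $G_0(\gamma(t))=0$ yields respectively
\[
\langle\nabla g(\gamma(t)),\overline{\gamma'(t)}\rangle=0 \qquad\text{and}\qquad \langle \gamma(t),\overline{\gamma'(t)}\rangle=0.
\]
Setting $h(x)=\sum_{i=1}^{n}\left(\frac{\partial g}{\partial x_i}(x)\right)^2$ and $u(t)=h(\gamma(t))$, and using that Fact \ref{factEILagrmultII} together with the definition of $\mathcal{U}_{II}$ provides scalars $\la_1(x),\la_2(x)\in\cc$ with $\nabla h(x)=\la_1(x)\nabla g(x)+\la_2(x)x$ for $x\in\Delta\cap V$, the chain rule gives
\[
u'(t)=\la_1(\gamma(t))\langle\nabla g(\gamma(t)),\overline{\gamma'(t)}\rangle+\la_2(\gamma(t))\langle \gamma(t),\overline{\gamma'(t)}\rangle=0,
\]
so $u$ is constant along $\gamma$. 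Hence $\pi_u$ is locally constant on $(\pi_y)^{-1}(y_0)\cap \mathcal{U}_{II}$.

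The proof would then be completed exactly as in Lemma \ref{lemfinitenesscriticalvalues} by invoking the Mumford-type fact that, for generic $y_0$, every irreducible component of $(\pi_y)^{-1}(y_0)$ has its intersection with $\mathcal{U}_{II}$ dense in it, so the local constancy of $\pi_u$ established above propagates to constancy on each connected component of the whole fibre. The only genuinely new point relative to Lemma \ref{lemfinitenesscriticalvalues} is the treatment of the extra term $\la_2\langle x,\overline{\gamma'}\rangle$, which is killed precisely because of the boundary constraint $G_0\circ\gamma=0$; this is automatic thanks to the "moreover" clause of Lemma \ref{factcalV1projII} asserting that $G_0$ vanishes on $V$. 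So this is not really an obstacle but rather the reason the Case II geometry was arranged with the second multiplier to begin with.
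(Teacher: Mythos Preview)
Your proposal is correct and follows essentially the same argument as the paper's proof: dispose of the trivial dimension cases, apply Lemma~\ref{factcalV1projII} to work in the local chart, differentiate both constraints $g\circ\gamma\equiv y_0$ and $G_0\circ\gamma\equiv 0$ to kill the two terms in $u'(t)$ coming from the Lagrange relation $\nabla h=\la_1\nabla g+\la_2 x$, and then pass from $\mathcal{U}_{II}$ to the whole fibre by density. One small slip: the neighbourhood $B$ lives in ${\bf M}=\cc^n\times\cc\times\cc$, not in $\M$.
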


\begin{proof} As in the proof of Lemma \ref{lemfinitenesscriticalvalues}, we may assume that $\dim \EE_{II}>0$ and $\dim\,(\pi_y)^{-1}(y)>0$ for generic $y\in\cc$. Then by Lemma \ref{factcalV1projII}, and under the notations of that lemma, 
  $\overline{\pi_y(\mathcal{U}_{II})}=\overline{\pi_y(\EE_{II})}=\cc$ and  $(\pi_y)^{-1}(y)\cap \mathcal{U}_{II}\ne \emptyset$ for generic $y\in \cc$. 

Take any $y_0\in \cc$ such that $(\pi_y)^{-1}(y_0)\cap \mathcal{U}_{II}\ne \emptyset$. Take any $x_0\in\cc^n$ and $u_0\in\cc$ such that $(x_0,y_0,u_0)\in \mathcal{U}_{II}$. By Lemma \ref{factcalV1projII} there exist a neighbourhood $B\subset \cc^n\times\cc\times\cc$ of $(x_0,y_0,u_0)$ and a holomorphic function $g:\Delta\to\cc$, where $\Delta\subset \cc^n$ is a neighbourhood of $x_0$, such that \eqref{eqformVgraphII} holds for some analytic set $V\subset \Delta$ such that $G_0$ vanishes on $V$.

Take a smooth curve $\gamma=(\gamma_1,\ldots,\gamma_n):[0,1]\to \Delta\cap V$ such that $g(\gamma(t))=y_0$. Then  
\begin{equation}\label{eqgammagammaprime}
G_0(\gamma(t))=0 \quad\hbox{for }t\in [0,1] .
\end{equation}
Let $h(x)=\left(\frac{\partial g}{\partial x_1}(x)\right)^2+\cdots+\left(\frac{\partial g}{\partial x_n}(x)\right)^2$, $x\in \Delta$. Take a function $u:[0,1]\to\cc$ defined by
$$
u(t)=h\circ \gamma(t),\quad t\in [0,1].
$$
Observe that the function $u$ is constant. Indeed, by definition of  $\mathcal{U}_{II}$, for any $x\in  \Delta\cap V$ there exist  $\la_{1,x},\la_{2,x}\in\cc$ such that 
$$
\nabla h(x)=\la_{1,x} \nabla g(x)+\la_{2,x}x.
$$
So
\begin{equation*}\label{equprime}
u'(t)=\la_{1,\gamma(t)}\langle \nabla g(\gamma(t)),\overline{\gamma'(t)}\rangle+\la_{2,\gamma(t)}\langle \gamma(t),\overline{\gamma'(t)}\rangle \quad\hbox{for }t\in[0,1].
\end{equation*}
Since $g(\gamma(t))=y_0$, we have $ \langle \nabla g(\gamma(t)),\overline{\gamma'(t)}\rangle=0$ for $t\in[0,1]$. Moreover, by \eqref{eqgammagammaprime} we have $\langle \gamma(t),\overline{\gamma'(t)}\rangle=0$ for $t\in[0,1]$. 
 Consequently, $u'(t)=0$ for $t\in[0,1]$ and $u$ is constant. Summing up, the function $\pi_u$ is constant on each connected component of $(\pi_y)^{-1}(y_0)\cap \mathcal{U}_{II}$. Since $\mathcal{U}_{II}$ is a dense subset of $\EE_{II}$, we obtain the assertion.
\end{proof}


Since $\Gamma$ is an infinite set, we have $\dim\E_{II,0}\ge 1$, so by Fact \ref{factWprojVII}, $\dim {\E_{II}}\ge 1$, and since $d=\deg P\ge 2$,  Lemma \ref{Fact3} and  the definition of $\V_{II}$ yield $\delta (\EE_{II})\le d(3d-2)^n$. So, from Lemma \ref{lemfinitenesscriticalvaluesII}, the closure of the projection  of $\EE_{II}$, 
$$W=\overline{\{(y,u)\in\cc^2:\exists_{x\in\cc^n}\;(x,y,u)\in \EE_{II}\}},$$ 
is a proper algebraic subset of $\cc^2$ and $\delta({W})\leq \delta(\EE_{II})$. Then there exists a nonzero polynomial $Q\in\cc[y,u]$ such that 
$\deg Q\le 2(3d-2)^n \leq\mathcal{R}(n,d)-1$ 
and $Q(y,u)=0$ for $(x,y,u)\in \EE_{II}$. Since $D=2(3d-2)^n$ is an even number, by Lemma \ref{lemfinishing}(a) we obtain the assertion of Theorem \ref{maintwr} in  case II.

\subsection{Proof of Theorem \ref{maintwrIII}}\label{roz2III}

Analogously to the proof of Lemma \ref{lemfinitenesscriticalvalues}, 
we prove that the set
$$
W=\overline{\{(y,u)\in\cc^2:\exists_{x\in\cc^n}\;\exists_{t\in\cc^n}\;\exists_{z\in\cc^n}\;(x,y,u,t,z)\in \Y_{I}\}}
$$
is a proper algebraic subset of $\cc^2$. Moreover, by Fact \ref{factdegrees} 
 we have $\delta(W)\le \delta(\Y_{I})\le 2d(2d-1)$ if $n=1$ and $\delta(W)\le \delta(\Y_I)\le 2(2d-1)^{3n+1}$ for $n\ge 2$. Then by  Lemma \ref{lemfinishing}(a) we obtain the assertion of Theorem \ref{maintwrIII} in case I. An analogous argument gives the assertion in  case II.


\begin{thebibliography}{99}




\bibitem{BK} J. Bochnak, W. Kucharz,  
\emph{Sur les germes d'applications diff\'erentiables \`a singularit\'es isol\'ees}.   
{Trans. Amer. Math. Soc.} 252 (1979), 115--131. 

\bibitem{BL} J. Bochnak, S. {\L}ojasiewicz, 
\emph{A converse of the
Kuiper-Kuo theorem}. In: Proc. of Liverpool Singularities-Symposium I
(1969\slash70), Lecture Notes in Math., vol. 192, Springer, Berlin, 1971, 254--261.

\bibitem{BR} J. Bochnak, J. J. Risler, \emph{Sur les exposants de {\L}ojasiewicz}, Comment. Math. Helv. 50 (1975), 493--507.

\bibitem{CH} P. Cassou-Nogu\`es, H. H. Vui, \emph{Th\'eor\`eme de  Kuiper-Kuo-Bochnak-Lojasiewicz \`a l'infini}. {Ann. Fac. Sci. Toulouse Math}. (6)  5 (1996),   387--406. 









\bibitem{CL}
S. S. Chang, Y. C. Lu, \emph{On $C^{0}$-sufficiency of complex jets}.
{Canad. J. Math}. 25  (1973), 874--880.

\bibitem{DK} D. D'Acunto, K. Kurdyka, \emph{Explicit bounds for the {\L}ojasiewicz exponent in the gradient inequality for polynomials}.  Ann. Polon. Math.  87  (2005), 51--61.

\bibitem{DK1}  D. D'Acunto, K. Kurdyka, \emph{Bounds for gradient trajectories and geodesic diameter of real algebraic sets}. Bull. London Math. Soc. 38 (2006), 951--965. 

\bibitem{DG} J. Damon, T. Gaffney, \emph{Topological triviality of deformations of functions and Newton filtrations}.
Invent. Math. 72 (1983),  335--358. 

\bibitem{FY} T. Fukui, E. Yoshinaga, \emph{The modified analytic trivialization of family of real analytic functions}.
Invent. Math. 82 (1985),  467--477. 

\bibitem{Gabrielov} A. Gabrielov, \emph{Multiplicities of Pfaffian intersection, and the {\L}ojasiewicz inequality}. Selecta Math. (N.S.) 1 (1995), 113--127.

\bibitem{G}  V. Grandjean, \emph{Finite determinacy relative to closed and finitely generated ideals}. {Manuscripta Math}. 103 (2000), 313--328. 

\bibitem{Gw} J. Gwo\'zdziewicz, \emph{The {\L}ojasiewicz exponent of an analytic function at an isolated zero}. Comment. Math. Helv. 74 (1999), 364--375.








\bibitem{K1} J. Koll\'ar, \emph{An effective {\L}ojasiewicz inequality for real polynomials}.  Period. Math. Hungar.  38  (1999),   213--221. 

\bibitem{Kui}N. H. Kuiper, \emph{$C^1$-equivalence of functions near 
      isolated critical points}. In: Proc. Sympos. in Infinite Dimensional
    Topology (Baton Rouge, 1967), Ann. of Math. Stud. 69, 
    Princeton Univ. Press, Princeton, NJ, 1972,  199--218.

\bibitem{Kuo}T. C. Kuo, \emph{On $C^0$-sufficiency of jets of potential
    functions}. Topology 8 (1969), 167--171.
    
    

\bibitem{KMP} K. Kurdyka, T. Mostowski, A. Parusi\'nski, \emph{Proof of the gradient conjecture of R. Thom}. 
Ann. of Math. (2) 152 (2000), 763--792.

\bibitem{KOSS}
K. Kurdyka, B. Osi\'nska-Ulrych, G. Skalski, S. Spodzieja, \emph{Effective {\L}ojasiewicz gradient inequality for generic Nash functions with isolated singularity}. Bull. Soc. Sci. Lett. {\L}\'od\'z S\'er. Rech. D\'eform. 66 (2016),47--64.



\bibitem{KS1} K. Kurdyka, S. Spodzieja, \emph{Separation of real algebraic sets and the {\L}ojasiewicz exponent}. Proc. Amer. Math. Soc. 142 (2014),  3089--3102.

\bibitem{KS2} K. Kurdyka, S. Spodzieja, \emph{Convexifying positive polynomials and sums of squares approximation}. SIAM J. Optim. 25 (2015),  2512–-2536. 

\bibitem{KSS}  K. Kurdyka, S. Spodzieja, A. Szlachci\'nska, \emph{Metric properties of semialgebraic sets}.  Discrete Comput. Geom. 55 (2016),  786--800. 

\bibitem{Kne} L. Kushner, \emph{Finite determination on algebraic sets}.
Trans. Amer. Math. Soc. 331 (1992),  553--561. 

\bibitem{LejeuneTeissier}
M. Lejeune-Jalabert, B. Teissier, \emph{Cl\^{o}ture int\'egrale des id\'eaux et \'equisingularit\'e}. Centre de Math\'ematiques Ecole Polytechnique Palaiseau, 1974.


\bibitem{Lo1} S. {\L}ojasiewicz, \emph{Ensembles semi-analytiques}. Preprint IHES, 1965.

\bibitem{Lo2} S. {\L}ojasiewicz, \emph{Sur les trajectoires du gradient d'une fonction analytique}. In: Geometry Seminars, 1982-1983, Univ. Stud. Bologna, Bologna, 1984, 115--117.

\bibitem{Lo3} S. {\L}ojasiewicz, \emph{Introduction to complex analytic geometry}. Birkh\"auser Verlag, Basel, 1991.

\bibitem{Merle}
M. Merle, \emph{Invariants polaires des courbes planes}. Invent. Math.  41  (1977),  103--111. 

\bibitem{MigusRS} 
P. Migus, T. Rodak, S. Spodzieja,  \emph{Finite determinacy of non-isolated singularities}. Ann. Polon. Math. 117 (2016),  197--206.



\bibitem{Mumford}
D. Mumford, \emph{Algebraic Geometry I. Complex Projective Varieties}. Springer, Berlin,  1976.

\bibitem{OSS} B. Osi\'nska-Ulrych, G. Skalski, S. Spodzieja, \emph{On $C^0$-sufficiency of jets}. Analytic and algebraic geometry, {Faculty of Mathematics and Computer Science, University of {\L}\'od\'z, {\L}\'od\'z}, 2013,  95--113.













\bibitem{P} R. Pellikaan, \emph{Finite determinacy of functions with non-isolated singularities}. {Proc. London Math. Soc}. (3) 57 (1988),  357--382.

\bibitem{TienSon} T. S. Pham, \emph{An explicit bound for the {\L}ojasiewicz exponent of real polynomials}. Kodai Math. J. 35 (2012),  311--319.

\bibitem{Ra} P. J. Rabier, \emph{Ehresmann fibrations and Palais-Smale conditions for morphisms of Finsler manifolds}.  Ann. of Math. (2) 146 (1997), 647--691.

\bibitem{RS2}
T. Rodak, S. Spodzieja, \emph{Effective formulas for the local {\L}ojasiewicz exponent}. Math. Z. 268 (2011), 37--44.

\bibitem{RS3}
T. Rodak, S. Spodzieja, \emph{{\L}ojasiewicz exponent near the fibre of a mapping}. Proc. Amer. Math. Soc. 139 (2011), 1201--1213. 

\bibitem{RS4} T. Rodak, S. Spodzieja,  \emph{Equivalence of mappings at infinity}. Bull. Sci. Math. 136 (2012), 679-–686. 



\bibitem{Si2} D. Siersma, \emph{Singularities with critical locus a 1-dimensional complete intersection and transversal type $A_1$}. {Topology Appl}. 27 (1987), 51–73. 

\bibitem{Sk} G. Skalski, \emph{On analytic equivalence of functions at infinity}. {Bull. Sci. Math}.  135 (2011),  517--530.   

\bibitem{Solerno}
P. Solern\'o, \emph{Effective {\L}ojasiewicz inequalities in semialgebraic geometry}. Appl. Algebra Engrg. Comm. Comput. 2 (1991),  2-–14.



\bibitem{Sp1} S. Spodzieja, \emph{The {\L}ojasiewicz exponent of subanalytic sets}.  Ann. Polon. Math.  87  (2005), 247--263. 


\bibitem{T} B. Teissier, \emph{Vari\'et\'es polaires. I. Invariants polaires des singularit\'es d'hypersurfaces}. Invent. Math. 40 (1977),  267--292.

\bibitem{Xu} X. Xu, \emph{$C^0$-sufficiency, Kuiper-Kuo and Thom conditions for non-isolated singularity}. {Acta Math. Sin. (Engl. Ser.)} 23 (2007),  1251–1256.   


\end{thebibliography}
\end{document}